\documentclass[a4,12pt]{article}

\usepackage{amsfonts, amsmath, amssymb, amsgen, amsthm, amscd,latexsym,mathrsfs}

\usepackage{color}
\usepackage[all]{xy}

\usepackage[top=3.2cm, left=2.2cm, bottom=2.2cm, right=2.2cm]{geometry}

\def\div{\mathop{\rm div}\nolimits}

\def\Diff{\mathop{\rm Diff}\nolimits}

\def\Id{\mathop{\rm Id}\nolimits}

\def\log{\mathop{\rm log}\nolimits}

\def\Hom{\mathop{\rm Hom}\nolimits}

\def\Cb{{\mathbb C}}

\def\Nb{{\mathbb N}}
\def\Rb{{\mathbb R}}

\def\Ac{{\cal A}}

\def\Fc{{\cal F}}

\def\Hc{{\cal H}}

\def\Jc{{\cal J}}
\def\Ic{{\cal I}}

\def\Uc{{\cal U}}
\def\Vc{{\cal V}}

\def\a{\alpha}

\def\d{\delta}
\def\D{\Delta}

\def\om{\omega}
\def\Om{\Omega}
\def\s{\sigma}

\def\t{\theta}
\def\z{\zeta}
\def\ve{\varepsilon}
\def\vp{\varphi}

\def\0b{\bf 0}

\def\ot{\otimes}

\def\ra{\rightarrow}

\def\rt{\triangleright}

\def\lt{\triangleleft}

\def\acl{\blacktriangleright\hspace{-4pt}\vartriangleleft }

\def\hb{\overset{\ra}{b}}

\def\hB{\overset{\ra}{B}}

\def\p{\partial}

\def\0D{\Delta^{(0)}}
\def\1D{\Delta^{(1)}}
\def\Db{\blacktriangledown}

\def\wg{\wedge}

\def\cop{{^{\rm cop}}}

\newcommand{\wbar}[1]{\overline{#1}}

\newcommand{\FD}{\mathfrak{D}}

\newcommand{\Fg}{\mathfrak{g}}

\newcommand{\Fn}{\mathfrak{n}}

\newcommand{\Fd}{\mathfrak{d}}

\newcommand{\Fs}{\mathfrak{s}}

\newcommand{\Ft}{\mathfrak{t}}

\def\projot{\widehat{\ot}_\pi}

\newtheorem{theorem}{Theorem}[section]
\newtheorem{remark}[theorem]{Remark}
\newtheorem{proposition}[theorem]{Proposition}
\newtheorem{lemma}[theorem]{Lemma}
\newtheorem{corollary}[theorem]{Corollary}

\newtheorem{definition}[theorem]{Definition}

\def\ni{\noindent}

\def\build#1_#2^#3{\mathrel{
\mathop{\kern 0pt#1}\limits_{#2}^{#3}}}
\newcommand{\ps}[1]{~\hspace{-4pt}_{^{(#1)}}}
\newcommand{\pr}[1]{~\hspace{-4pt}_{_{\{#1\}}}}
\newcommand{\ns}[1]{~\hspace{-4pt}_{_{{<#1>}}}}

\def\odots{\ot\cdots\ot}
\def\wdots{\wedge\dots\wedge}

\def\one{{\bf 1}}



\newcommand{\ie}{{\it i.e.\/}\ }

\def\a{\alpha}

\def\d{\delta}

\def\om{\omega}
\def\s{\sigma}
\def\t{\theta}
\def\ve{\varepsilon}

\def\vp{\varphi}

\def\z{\zeta}

\def\D{\Delta}

\def\Om{\Omega}

\def\dt{\left.\frac{d}{dt}\right|_{_{t=0}}}

\def\ot{\otimes}
\def\part{\partial}

\def\wdg{\wedge}

\def\ra{\rightarrow}

\def\text{\hbox}

\def\ot{\otimes}

\def\ra{\rightarrow}

\def\wdg{\wedge}

\def\Diff{\mathop{\rm Diff}\nolimits}

\def\Hom{\mathop{\rm Hom}\nolimits}

\def\Id{\mathop{\rm Id}\nolimits}
\def\exp{\mathop{\rm exp}\nolimits}

\def\lra{\longrightarrow}
\def\lla{\longleftarrow}

\def \vDD{\uparrow\hspace{-1pt}d}
\def\hD{\overset{\ra}{d}}

\def\build#1_#2^#3{\mathrel{
\mathop{\kern 0pt#1}\limits_{#2}^{#3}}}

\numberwithin{equation}{section}
\parindent0in

\setlength{\parskip}{2mm}

\title{\bf Hopf-cyclic cohomology of the Connes-Moscovici Hopf algebras with infinite dimensional coefficients}
\author{ B. Rangipour\thanks{University of New Brunswick, Department of Mathematics and Statistics,
      Fredericton, NB, Canada,\, Email: bahram@unb.ca, \,\, fereshteh.yazdani@unb.ca}  \qquad
       S. S\"utl\"u\thanks{I\c{s}{\i}k University, Department of Mathematics, 34980, \c{S}ile, {\.I}stanbul, Turkey, Email: \,serkan.sutlu@isikun.edu.tr} \quad \quad F. Yazdani Aliabadi$^\ast$ }

\date{}

\begin{document}
\maketitle

\abstract{
We discuss a new strategy for the computation of the Hopf-cyclic cohomology of the Connes-Moscovici Hopf algebra $\Hc_n$. More precisely, we introduce a multiplicative structure on the Hopf-cyclic complex of $\Hc_n$, and we show that the van Est type characteristic homomorphism from the Hopf-cyclic complex of $\Hc_n$ to the Gelfand-Fuks cohomology of the Lie algebra $W_n$ of formal vector fields on $\Rb^n$ respects this multiplicative structure. We then illustrate the machinery for $n=1$.
}

\tableofcontents

\section{Introduction}

The van Est type isomorphism 
\begin{equation*}
\kappa_n: \bigoplus_{i\,\equiv\, \ast\,\,({\rm mod}\,2)} \, H^i_{GF}(W_n,\Cb) \lra HP^\ast(\Hc_n,\d,1)
\end{equation*}
of \cite[Thm. 11]{ConnMosc98}, see also \cite[(4.12)]{ConnMosc}, and its relative version
\begin{equation*}
\kappa_{n,SO(n)}: \bigoplus_{i\,\equiv\, \ast\,\,({\rm mod}\,2)} \, H^i_{GF}(W_n,SO(n),\Cb) \lra HP^\ast(\Hc_n, SO(n),\d,1)
\end{equation*}
between the Hopf-cyclic cohomology (with trivial coefficients) of the Connes-Moscovici Hopf algebra $\Hc_n$, and the Gelfand-Fuks cohomology of the infinite dimensional Lie algebra $W_n$ of formal vector fields over $\Rb^n$ allowed a link between the characteristic classes of foliations and the total index class of the hypoelliptic signature operator \cite{ConnMosc95}. This way, the scope the theory of characteristic classes was broadened even further, \cite{ConnMosc04}. As such, a considerable amount of research on the Hopf algebra $\Hc_n$, and the (periodic) Hopf-cyclic cohomology Hopf (co)module (co)algebras has been initiated.

The first explicit computations on the Hopf-cyclic cohomology of the Connes-Moscovici Hopf algebras has been carried out by \cite{ConnMosc98,MoscRang07} for $\Hc_1$, using the bicrossed-product structure on $\Hc_n$. Those results were then followed by \cite{RangSutl-IV} for $\Hc_2$, in the presence of a cup product construction with an equivariant extension of the Hopf-cyclic cohomology. Finally, using a van Est type characteristic homomorphism through the Bott complex \cite{Bott76} and the simplicial de Rham complex \cite{Dupo76} of Dupont, Moscovici showed in \cite{Mosc14} that the elements of the Vey basis for the Gelfand-Fuks cohomology of $W_n$ can be transferred to the Hopf-cyclic cohomology of $\Hc_n$.

We, on the other hand, introduce in the present paper a multiplicative structure on the Hopf-cyclic cohomology complex of $\Hc_n$ (and in the presence of a highly non-trivial coefficients), and show that our van Est type characteristic homomohism \cite{RangSutl-III} between the Gelfand-Fuks cohomology of $W_n$ and the Hopf-cyclic cohomology of $\Hc_n$ respects the multiplicative structures on its domain and range. Thus, we can move the characteristic classes to the Hopf-cyclic cohomology by transfering only the multiplicative generators, and thus obtain a (Vey) basis for the Hopf-cyclic cohomology of $\Hc_n$.

The Hopf algebra $\Hc_n$ is introduced in \cite{ConnMosc98}, for each $n\in \Nb$, as an organisational device in the computation of the index of the tranversally elliptic operators on foliations. By its very nature, $\Hc_n$ is a Hopf algebra of differential operators on the bundle $F^+(M)$ of orientation preserving frames on a flat $n$-manifold $M$, and it thus acts naturally on the cross-product algebra $\Ac_\Gamma:= C^\infty(F^+) \rtimes \Gamma$, for any pseudogroup $\Gamma$ of partial diffeomorphisms on $F^+$. The structure of $\Hc_n$ has been investigated extensively through \cite{FiguGracBond05,FiguGracBondVari05,HadfMaji07,MoscRang07,MoscRang09}. 

It was first observed in \cite{HadfMaji07} that $\Hc_1$ is a bicrossproduct Hopf algebra. Then in \cite{MoscRang07,MoscRang09} the authors showed, using its module algebra action on the algebra $\Ac_\Gamma:= C^\infty(F^+) \rtimes \Gamma$, that this is in fact the case for any $n \in \Nb$.

The domain of the van Est type map, Hopf-cyclic cohomology, is introduced in \cite{ConnMosc98} as a cyclic cohomology theory associated to a Hopf algebra and a pair of elements (called the modular pair in involution, or MPI in short) consisting of a grouplike element in the Hopf algebra, and a character of the Hopf algebra. The theory was then developed through \cite{HajaKhalRangSomm04-II, HajaKhalRangSomm04-I,Kayg05} as a cyclic cohomology theory associated to a (co)algebra, equipped with a Hopf algebra (co)action, and a particular (co)representation of that Hopf algebra as the space of coefficients (called stable-anti-Yetter-Drinfeld modules, or SAYD modules in short), so that \cite{ConnMosc98}'s $HP^\ast(\Hc_n,\d,1)$ is the (periodic) Hopf-cyclic cohomology with trivial coefficients.

It turned out that the bicrossproduct structure of $\Hc_n$ was not only helpful in understanding its Hopf algebra structure, but is was also crucial to compute its Hopf-cyclic cohomology. This point of view was taken in \cite{MoscRang09} to introduce a bicocyclic bicomplex computing the Hopf-cyclic cohomology (with trivial coefficients) of $\Hc_n$. 

On the other hand, nontrivial examples of SAYD modules over bicrossproduct Hopf algebras were developed through \cite{RangSutl-II,RangSutl-III,RangSutl}. More precisely, given a bicrossproduct Hopf algebra associated to a Lie algebra via semi-dualisation \cite{ Maji90,Majid-book}, a SAYD module was associated to any representation of the Lie algebra. In \cite{RangSutl-III}, a concrete 4-dimensional SAYD module over the Schwarzian quotient $\Hc_{{\rm 1S}}$ of $\Hc_1$ was constructed this way, and the Hopf-cyclic cohomology of $\Hc_{{\rm 1S}}$ with coefficients in this particular space were computed. Furthermore, it was also observed in \cite{RangSutl-III} that the Connes-Moscovici Hopf algebra $\Hc_n$ is an example of a semi-dualisation Hopf algebra associated to the Lie algebra $W_n$ of formal vector fields on $\Rb^n$, and since $W_n$ has no nontrivial finite dimensional representation, $\Hc_n$ does not admit any nontrivial finite dimensional SAYD module.

It was this last result that prompted us to think about the Hopf-cyclic cohomology of $\Hc_n$ with infinite dimensional coefficients. In fact, an example of an infinite dimensional SAYD module over a Hopf subalgebra of $\Hc_1$ was already introduced in \cite{Antal-thesis}. However, there appears to be no attempt in the literature regarding an explicit computation of the Hopf-cyclic cohomology of Connes-Moscovici Hopf algebras with infinite dimensional coefficients. 

Now the range of the van Est type homomorphism, the Gelfand-Fuks cohomology of the Lie algebra $W_n$ of formal vector fields on $\Rb^n$ was the target of a series of attempts \cite{GelfFuks69-III,GelfFuks69-I-II,GelfFuks70-II,GelfFuks70-III,GelfFuks70-IV}. It is known to be finite dimensional \cite{GelfFuks70-III,GelfFuks70-V}, and provides a universal source for all characteristic classes of foliations \cite{Bott76}. On the other hand, the cohomology of $W_n$ with nontrivial coefficients has been studied through \cite{GelfFeigFuks74,GelfFuks70,Losi70}, see also \cite{Fuks-book}. In the present paper we shall consider the cohomology of $W_n$ with coefficients in the space of formal differential forms.

In the case of the trivial coefficients, the van Est type characteristic map between the Gelfand-Fuks cohomology of $W_n$ and the Hopf-cyclic cohomology of $\Hc_n$ has also been considered in \cite{MoscRang11, MoscRang15} from the point of view of the integration of invariant forms over simplexes in the spaces of jets of diffeomorphims. In \cite{RangSutl-III,RangSutl}, however, the transfer of classes was achieved via a characteristic isomorphism in the opposite direction, \ie from the Hopf-cyclic cohomology to the Gelfand-Fuks cohomology via differentiation. Here we shall adopt this last point of view, introduce a multiplicative structure on the Hopf-cyclic cohomology (with coefficients) bicomplex of $\Hc_n$, and show that the characteristic homomorphism respects the multiplicative structures on its domain and the range.

The plan of the paper is as follows. In Section \ref{diff-forms} we consider the space $\Om_n^{\leq1}$ of formal differential 0-forms together with 1-forms on $\Rb^n$. We review the bicrossproduct structure of the Connes-Moscovici Hopf algebra $\Hc_n$, and then we illustrate the (induced) SAYD module structure of $\Om_n^{\leq1}$ over $\Hc_n$. Section \ref{Lie-cohom} is devoted to the Lie algebra cohomology, with coefficients. In particular, we recall the cohomology of a matched pair Lie algebra, as well as the cohomology of $W_n$ with coefficients in the space $\Om_n^{\leq1}$. In Section \ref{Hopf-cyclic} we recall the Hopf-cyclic cohomology, with coefficients, for Hopf algebras. More importantly, it is Section \ref{Hopf-cyclic} in which we introduce a multiplicative structure on the Hopf-cyclic bicomplex. Finally, we show in Section \ref{classes} that our characteristic isomorphism respects the multiplicative structures on the Hopf-cyclic complex of $\Hc_n$ and the Lie algebra cohomology complex of $W_n$. We illustrate the whole discussion in the case $n=1$. More explicitly, we transfer the generators of $H^\ast(W_1,\Om_1^{\leq1})$ to the Hopf-cyclic cohomology $HC^\ast(\Hc_1,\Om_{1\d}^{\leq1})$.

\section{The space of formal differential forms}\label{diff-forms}

\subsection{The Connes-Moscovici Hopf algebra $\Hc_n$}

We recall, in this section, the Connes-Moscovici Hopf algebra $\Hc_n$, and its bicrossed product structure from \cite{ConnMosc98,MoscRang09}. Referring the reader to \cite{Maji90,Majid-book,Sing70} for a quick review of the bicrossed product Hopf algebras, as well as the matched pairs of Lie groups and Lie algebras, 
we begin with the note that we are going to use the Sweedler's notation \cite{Sweed-book} for the coaction and the comultiplication.

\ni From the group decomposition point of view, the Connes-Moscovici Hopf algebra $\Hc_n$ is constructed by the Kac decomposition, \cite{Kac68}, of the group $\Diff(\Rb^n)$ of diffeomorphisms of $\Rb^n$. Accordingly, $\Diff(\Rb^n) = G\times N$, where $G \cong F\Rb^n \cong GL_n^{\rm aff}$ is the group of affine transformations, and 
\begin{equation*}
N = \{\phi \in \Diff(\Rb^n)\mid \phi(0)=0,\,\, \phi'(0)=\Id\}.
\end{equation*}
We thus have the Hopf algebra $\Uc:=U(g\ell_n^{\rm aff})$, where
\begin{align*}
& g\ell_n^{\rm aff} = \langle \{X_k, Y_i^j \mid 1 \leq i,j,k \leq n\}\rangle, \\ 
& [Y_i^j, X_k]=\d^j_kX_i, \quad [X_k,X_\ell]=0,\quad [Y_i^j,Y_p^q]=\d^j_pY_i^q - \d_i^qY^j_p
\end{align*}
is the Lie algebra of the group $GL_n^{\rm aff}:=\Rb^n\rtimes GL_n$, and the Hopf algebra $\Fc:=\Fc(N)$ of regular functions on $N$ generated by the functions given by
\begin{equation*}
\a^i_{jk_1\ldots k_r}(\psi) = \p_{k_r}\ldots \p_{k_1}\p_j(\psi^i(x))|_{x=0}, \qquad 1 \leq i,j,k_1,\ldots, k_r\leq n,\,\, \psi\in N,
\end{equation*}
or alternatively by the functions
\begin{equation*}
\eta^i_{jk\ell_1\ldots \ell_r}(\psi) = \p_{\ell_r}\ldots \p_{\ell_1}\left((\psi'(x)^{-1})^i_\nu\p_j\p_k\psi^\nu(x)\right)|_{x=0}.
\end{equation*}
The Hopf algebra $\Fc$ is a $\Uc$-module algebra by the action
\begin{equation*}
(Z\rt f)(\psi):=\left.\frac{d}{dt}\right|_{t=0}f(\psi\lt \exp(tZ)), \qquad f\in \Fc, \,\, Z\in g\ell_n^{\rm aff},
\end{equation*}
and $\Uc$ is a $\Fc$-comodule coalgebra by the coaction 
\begin{align}\label{coact-g-F}
\begin{split}
& \Db:g\ell_n^{\rm aff}\lra g\ell_n^{\rm aff}\ot \Fc, \\ 
& \Db(X_k) = X_k\ot 1 + Y_i^j\ot \eta^i_{jk}, \qquad \Db(Y_i^j)=Y_i^j\ot 1
\end{split}
\end{align}
which is extended to a coaction $\Db:\Uc\to \Uc\ot \Fc$. 

\begin{remark} 
{\rm
In view of the non-degenerate pairing \cite[(3.50)]{RangSutl-III}, see also \cite[Prop. 3]{ConnMosc98} or \cite[Prop. 3]{ConnKrei98}, $\Fc$ is isomorphic with the Hopf algebra $R(\Fn)$ of representative functions on $U(\Fn)$, where $\Fn$ is the Lie algebra of the group $N$, and the coaction \eqref{coact-g-F} dualizes the left $\Fn$-action on $g\ell_n^{\rm aff}$.
}
\end{remark}

\ni Finally, it follows from \cite[Prop. 2.14]{MoscRang09} that $(\Fc,\Uc)$ is a matched pair of Hopf algebras, and from \cite[Thm. 2.15]{MoscRang09} that ${\Hc_n}\cop \cong \Fc\acl \Uc$.

\ni To review the bicrossed product structure of $\Hc_n$, from the Lie algebra decomposition point of view, we consider the Lie algebra $W_n$ of formal vector fields ton $\Rb^n$. Elements of $W_n$ are expressed as $\sum_{i=1}^n f^i(x^1,\ldots, x^n)\p_i$, where $f^i(x^1,\ldots, x^n)$ is a formal power series in the indeterminates $x^1,\ldots, x^n$, for any $i=1,\ldots, n$. It is an infinite dimensional vector space
\begin{equation*}
W_n = \left\langle\{e_i:= \p_i,\,\,e^j_i:= x^j\p_i,\,\,e_i^{jk_1\ldots k_r}:= x^jx^{k_1}\ldots x^{k_r}\p_i \mid 1\leq i,j,k_1 \ldots k_r \leq n \}\right\rangle,
\end{equation*}
with the Lie bracket given by
\begin{align*}
& [e_i,e_j]=0, \qquad [e_k,e_i^j] = \d_k^je_i, \qquad [e_\ell,e_i^{jk_1 \ldots k_r}] =\d^j_\ell e_i^{k_1 \ldots k_r}, \\
& [e_i^j,e_p^{q\ell_1 \ldots \ell_r}] = \d_i^qe_p^{j\ell_1 \ldots \ell_r} + \sum_{m=1}^n\d_i^{\ell_m}e_p^{jq\ell_1 \ldots \widehat{\ell_m} \ldots \ell_r} - \d_p^je_i^{q\ell_1 \ldots \ell_r},\\
& [e_i^{jk_1 \ldots k_r}, e_p^{q\ell_1 \ldots \ell_s}] = \\
& \d_i^qe_p^{jk_1 \ldots k_r\ell_1 \ldots \ell_s} + \sum_{m=1}^s\d_i^{\ell_m}e_p^{jqk_1 \ldots k_r\ell_1 \ldots \widehat{\ell_m}\ldots \ell_s} - \d_p^je_i^{q\ell_1 \ldots \ell_s k_1 \ldots k_r} \\
&- \sum_{m=1}^r\d_p^{k_m}e_i^{jq\ell_1 \ldots \ell_sk_1 \ldots \widehat{k_m}\ldots k_r}.
\end{align*}
Setting 
\begin{equation*}
\Fs:=\left\langle\{e_i:= \p_i,\,\,e^j_i:= x^j\p_i \mid 1\leq i,j \leq n \}\right\rangle \cong g\ell_n^{\rm aff},
\end{equation*}
and 
\begin{equation*}
\Fn := \left\langle\{e_i^{jk_1\ldots k_r}:= x^jx^{k_1}\ldots x^{k_r}\p_i \mid 1\leq i,j,k_1 \ldots k_r \leq n \}\right\rangle,
\end{equation*}
we obtain at once the matched pair decomposition $W_n=\Fs\bowtie \Fn$. The mutual actions are, via \cite[Prop. 8.3.2]{Majid-book},
\begin{equation*}
e_i^{jk_1\ldots k_r} \rt e_\ell = \begin{cases}
-\d^j_\ell e_i^{k_1}, &  \text{ if } r=1, \\
0, & \text{ if } r\geq 2,
\end{cases} \qquad e_i^{jk_1\ldots k_r} \rt e_p^q = 0,
\end{equation*}
and 
\begin{align*}
& e_i^{jk_1\ldots k_r} \lt e_\ell = \begin{cases}
0, & \text{ if } r=1, \\
-\d^j_\ell e_i^{k_1\ldots k_r}, & \text{ if } r\geq 2,
\end{cases} \\
& e_i^{jk_1\ldots k_r} \lt e_p^q = \d_i^qe_p^{jk_1 \ldots k_r} -\d_p^je_i^{qk_1 \ldots k_r} - \sum_{m=1}^n\d_p^{k_m}e_i^{jqk_1 \ldots \widehat{k_m} \ldots k_r}.
\end{align*}

\ni We next recall the concept of a Lie-Hopf algebra, \cite{RangSutl} and see also \cite{RangSutl-V}.

\begin{definition}
Let a Lie algebra $\Fg$ act on a commutative Hopf algebra $\Fc$ by derivations, and $\Fc$ coacts on $\Fg$. Then $\Fc$ is said to be a $\Fg$-Hopf algebra if
\begin{enumerate}
\item the coaction $\Db:\Fg\to \Fg\ot \Fc$ of $\Fc$ on $\Fg$ is a map of Lie algebras, where the bracket on $\Fg\ot\Fc$ is given by
\begin{equation}\label{top-bracket}
[X\ot f, Y\ot g]:= [X,Y]\ot fg + Y\ot \ve(f)X\rt g- X\ot\ve(g)Y\rt Y,
\end{equation}
for any $X,Y\in \Fg$, and any $f,g \in \Fc$,
\item the comultiplication and counit of $\Fc$ are $\Fg$-linear, \ie $\D(X\rt f) = X \bullet \D(f)$, and $\ve(X\rt f) = 0$, where the latter action is given by
\begin{align}\label{bullet}
\begin{split}
&X\bullet (f^1\odots f^q) := \\
&\hspace{1cm} X\ps{1}\ns{0}\rt f^1 \ot X\ps{1}\ns{1}X\ps{2}\ns{0}\rt f^2 \ot\cdots\\
&\cdots\ot X\ps{1}\ns{q-1}\ldots X\ps{q-1}\ns{1}X\ps{q}\rt f^q,
\end{split}
\end{align}
for any $X \in \Fg$, and any $f^1,\ldots, f^q\in \Fc$.
\end{enumerate}
\end{definition}

\ni The proof of the following proposition is similar to that of \cite[Prop. 2.10]{RangSutl-I-arxiv}, and hence is omitted.

\begin{proposition}\label{s-induced}
The commutative Hopf algebra $\Fc=\Fc(N)$ is an $\Fs$-Hopf algebra.
\end{proposition}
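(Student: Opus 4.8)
The plan is to verify the two axioms in the definition of a $\Fs$-Hopf algebra directly on generators. The action of $\Fs\cong g\ell_n^{\rm aff}$ on $\Fc$ is taken to be the restriction of the $\Uc$-module algebra structure recalled above; since every element of $\Fs$ is primitive in $\Uc=U(g\ell_n^{\rm aff})$, the module-algebra axiom forces it to act by a derivation, so this requirement is automatic. The coaction $\Db:\Fs\to\Fs\ot\Fc$ is the one in \eqref{coact-g-F}. It then remains to check (i) that $\Db$ is a morphism of Lie algebras for the bracket \eqref{top-bracket}, and (ii) that the comultiplication and the counit of $\Fc$ are $\Fs$-linear.

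For (i) I would test the three families of brackets on the generators $X_k$ and $Y_i^j$. Since $\Db(Y_i^j)=Y_i^j\ot 1$ and the action annihilates the unit $1\in\Fc$, the last two terms of \eqref{top-bracket} drop out and $[Y_i^j,Y_p^q]=\d^j_pY_i^q-\d_i^qY^j_p$ is reproduced at once. The relation $[Y_i^j,X_k]=\d^j_kX_i$ expands, through \eqref{top-bracket} and $\Db(X_k)=X_k\ot 1+Y_a^b\ot\eta^a_{bk}$, into a linear identity that pins down $Y_i^j\rt\eta^a_{bk}$, which one checks against the explicit module-algebra action. The delicate case is $[X_k,X_\ell]=0$: using $\ve(\eta^a_{bk})=0$ and $X\rt 1=0$, the expansion of $[\Db(X_k),\Db(X_\ell)]$ collapses to an $X$-component proportional to $\eta^a_{\ell k}-\eta^a_{k\ell}$, which vanishes by the symmetry $\eta^a_{k\ell}=\eta^a_{\ell k}$ of the generators, together with a $Y$-component whose coefficient is the ``flatness'' expression $X_k\rt\eta^p_{q\ell}-X_\ell\rt\eta^p_{qk}+\eta^p_{bk}\eta^b_{q\ell}-\eta^a_{qk}\eta^p_{a\ell}$. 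This last quantity is the step I expect to be the main obstacle: it vanishes only after substituting the explicit formula for the action of $X_k$ on the degree-one generators $\eta^a_{b\ell}$ (which returns the degree-two generator together with quadratic corrections), so it is precisely here that the structure constants of $W_n$ enter.

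For (ii), the counit condition $\ve(X\rt f)=0$ is geometric: $\ve$ is evaluation at the identity $\Id\in N$, whereas $X\rt f$ differentiates $f$ along the right $G$-action $\psi\mapsto\psi\lt\exp(tX)$, and since $\Id\lt\exp(tX)=\Id$ (the factor $G$ fixes the identity coset of $N$) the derivative at the base point vanishes. The comultiplication condition $\D(X\rt f)=X\bu\D(f)$, with $X\bu$ as in \eqref{bullet}, I would verify on the algebra generators $\eta^a_{bk}$ of $\Fc$ using \eqref{coact-g-F}, and then propagate to arbitrary $f$ by the fact that $X$ acts by a derivation while the operation \eqref{bullet} is engineered to be compatible with the product of $\Fc$; the inductive step once more reduces to \eqref{coact-g-F} and the explicit action.

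Conceptually, all of this is the infinitesimal shadow of a statement already in hand: a matched pair of Hopf algebras of the form $(\Fc,U(\Fg))$ is the same datum as a $\Fg$-Hopf algebra structure on $\Fc$, the two axioms above being exactly the generator-level form of the matched-pair compatibilities. As $(\Fc,\Uc)=(\Fc,U(g\ell_n^{\rm aff}))$ is already a matched pair by \cite[Prop. 2.14]{MoscRang09}, the proposition follows; this is the content of the cited \cite[Prop. 2.10]{RangSutl-I-arxiv}, whence the omission of the proof.
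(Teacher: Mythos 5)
Your proposal is correct and follows essentially the route the paper intends: the paper omits the proof precisely because it is the generator-level verification you describe (mirroring \cite[Prop.~2.10]{RangSutl-I-arxiv}), and you correctly isolate the only nontrivial points — the symmetry $\eta^a_{k\ell}=\eta^a_{\ell k}$ killing the $X$-component and the flatness identity $X_k\rt\eta^p_{q\ell}-X_\ell\rt\eta^p_{qk}+\eta^p_{ak}\eta^a_{q\ell}-\eta^a_{qk}\eta^p_{a\ell}=0$ in the $Y$-component of $[\Db(X_k),\Db(X_\ell)]$. Your closing observation — that the statement also follows from the matched pair $(\Fc,\Uc)$ of \cite[Prop.~2.14]{MoscRang09} via the equivalence between $\Fg$-Hopf algebra structures and matched pairs of the form $(\Fc,U(\Fg))$ — is a legitimate shortcut that bypasses the flatness computation entirely; note only that the paper runs this equivalence in the opposite direction, using the proposition to \emph{deduce} the matched pair, so the two arguments are independent rather than circular only because \cite{MoscRang09} establishes its matched pair by other means.
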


\ni As a result, it follows from \cite[Thm. 2.6]{RangSutl-I-arxiv}, see also \cite[Thm. 2.14]{RangSutl-V}, that $(\Fc(N),U(\Fs))$ is a matched pair of Hopf-algebras, and the bicrossed product Hopf algebra $\Fc(N)\acl U(\Fs) = \Fc(N)\acl U(g\ell_n^{\rm aff})$ is isomorphic (as Hopf algebras) with $\Hc_n\cop$.

\subsection{SAYD structure over $\Hc_n$}

It was observed in \cite{RangSutl-III} that the only finite dimensional AYD module over the Connes-Moscovici Hopf algebra $\Hc_n$ is the trivial one, $\Cb_\d$. On the other hand, the dual of the space of formal exterior differential 1-forms was considered in \cite{Antal-thesis} as an infinite dimensional nontrivial example, over a Hopf subalgebra of $\Hc_1$. In this section we study the space of formal differential $\leq 1$-forms as an infinite dimensional coefficient space for the Hopf-cyclic cohomology of the Hopf algebra $\Hc_n$.

\ni Let us recall from \cite{HajaKhalRangSomm04-II} that a vector space
 $V$ is called a right-left
stable-anti-Yetter-Drinfeld (SAYD) module over $H$ if it
is a right
 $H$-module, a left $H$-comodule, and
\begin{equation}\label{aux-SAYD-condition}
\nabla(v\cdot h)= S(h\ps{3})v\ns{-1}h\ps{1}\ot v\ns{0}\cdot h\ps{2},\qquad  v\ns{0}\cdot v\ns{-1}=v,
\end{equation}
for any $v\in V$ and any $h\in H$.

\ni Adopting the notation of \cite{Fuks-book}, we let $\Om_n^q$ to denote the space of formal exterior differential $q$-forms on $\Rb^n$. In particular, $\Om_n^0$ is the space of formal power series in $x^1,\ldots, x^n$, and
\begin{equation*}
\Om_n^1 := \{f_idx^i \mid f_i \text{ is a formal power series of } x^1,\ldots, x^n\}
\end{equation*}
is the space of formal differential 1-forms. The space $\Om_n^1$ is an infinite dimensional vector space, and it has a natural $W_n$-module structure, \cite[Subsect. 2.2.4]{Fuks-book}. We shall, in particular, consider the space $\Om_n^{\leq1}:=\Om_n^0\oplus\Om_n^1$, which is naturally a $U(\Fn)$-module. Transposing the action $U(\Fn) \ot \Om_n^{\leq1} \to \Om_n^{\leq1}$, we obtain
\begin{equation*}
{\Om_n^{\leq1}}^\ast \lra \left(U(\Fn) \ot \Om_n^{\leq1}\right)^\ast.
\end{equation*}
which factors through the embedding
\begin{equation*}
U(\Fn)^\ast \ot {\Om_n^{\leq1}}^\ast \lra \left(U(\Fn) \ot \Om_n^{\leq1}\right)^\ast,
\end{equation*}
see for instance \cite[Sect. 5.3]{Antal-thesis}. 

\ni It then follows from ${\Om_n^\lambda}^\ast = \Om_n^{1-\lambda}$, see \cite{OvsiRoge98}, and the nondegenerate pairing between $U(\Fn)$ and $\Fc(N)$ that we have a (left, and then using the antipode) right coaction
\begin{equation*}
\Db:\Om_n^{\leq1} \lra \Om_n^{\leq1} \ot \Fc(N), \qquad \om\mapsto \om\ns{0}\ot \om\ns{1},
\end{equation*}
so that, given any $v \in U(\Fn)$, $\om\ns{0}\,\om\ns{1}(v) = v \rt \om$, see also \cite[Eqn. (5.42)]{Antal-thesis}.

\begin{remark}
{\rm
We remark that in the expense of passing to the topological vector spaces (in the sense of \cite{BonnFlatGersPinc94,BonnSter05}) and their tensor product (for which we refer the reader to \cite{Schaefer-book,Treves-book}) we may always dualize the above left action to a right coaction.
}
\end{remark}

\ni Following \cite{RangSutl,RangSutl-V}, we shall observe that $\Om_n^{\leq 1}$ is an induced SAYD module over the Hopf algebra $\Fc(N) \acl U(\Fs)$. 
 We therefore recall its definition.
\begin{definition}
Let $\Fg$ be a Lie algebra, and $\Fc$ a $\Fg$-Hopf algebra. Let also $M$ be a (left) $\Fg$-module, and a right $\Fc$-comodule via $\Db:M \to M \ot\Fc$. We then call $M$ an induced $(\Fg,\Fc)$-module if 
\begin{equation}\label{induced-module}
\Db(X\cdot m) = X \bullet \Db(m)
\end{equation} 
for any $X \in \Fg$, any $m \in M$, and any $f \in \Fc$.
\end{definition}

\begin{lemma}
The space $\Om_n^{\leq 1}$ is an induced $(\Fs,\Fc(N))$-module.
\end{lemma}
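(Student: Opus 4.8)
The plan is to verify condition \eqref{induced-module}, that is, $\Db(X\cdot\om)=X\bullet\Db(\om)$ for every $X\in\Fs$ and $\om\in\Om_n^{\leq1}$. Both sides being linear in $X$, it suffices to test the generators $X_k=e_k$ and $Y_i^j=e_i^j$ of $\Fs=g\ell_n^{\rm aff}$, for which \eqref{coact-g-F} gives $\Db(Y_i^j)=Y_i^j\ot1$ and $\Db(X_k)=X_k\ot1+Y_i^j\ot\eta^i_{jk}$. Since these generators are primitive in $U(\Fs)$, the $\bullet$-action \eqref{bullet}, transported to $\Om_n^{\leq1}\ot\Fc(N)$ with the module action on the first leg, specializes to
\begin{equation*}
X\bullet\Db(\om)=X\ns{0}\cdot\om\ns{0}\ot X\ns{1}\,\om\ns{1}+\om\ns{0}\ot(X\rt\om\ns{1}),
\end{equation*}
which for $X=Y_i^j$ collapses to $Y_i^j\cdot\om\ns{0}\ot\om\ns{1}+\om\ns{0}\ot(Y_i^j\rt\om\ns{1})$ and for $X=X_k$ keeps the $\eta^i_{jk}$-term.

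Next I would dualize. By construction the coaction $\Db$ is the transpose of the $U(\Fn)$-action, fixed by $\om\ns{0}\,\om\ns{1}(v)=v\rt\om$ for all $v\in U(\Fn)$; since the pairing between $U(\Fn)$ and $\Fc(N)$ is non-degenerate (after passing, where needed, to the topological completions of the Remark above), an identity in $\Om_n^{\leq1}\ot\Fc(N)$ can be checked by evaluating the $\Fc(N)$-leg against an arbitrary $v\in U(\Fn)$. Pairing the left-hand side of \eqref{induced-module} against $v$ yields $v\rt(X\cdot\om)=(vX)\rt\om$, the action of the product $vX\in U(W_n)$ on $\om$.

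For the right-hand side I would use multiplicativity of the pairing against the coproduct $\D(v)=v\ps{1}\ot v\ps{2}$ of $U(\Fn)$ together with two dualities already present in the text: the coaction \eqref{coact-g-F} dualizes the $\Fn$-action on $\Fs$ (the Remark following \eqref{coact-g-F}), giving $X\ns{0}\,X\ns{1}(v\ps{1})=v\ps{1}\rt X$, and the $\Fs$-action on $\Fc(N)$ dualizes the $\Fs$-action on $U(\Fn)$, giving $(X\rt\om\ns{1})(v)=\om\ns{1}(v\lt X)$. After substitution the right-hand side pairs to $(v\ps{1}\rt X)\cdot(v\ps{2}\rt\om)+(v\lt X)\rt\om$, i.e. the action on $\om$ of $(v\ps{1}\rt X)\,v\ps{2}+(v\lt X)$. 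Hence \eqref{induced-module} reduces to the single \emph{straightening identity}
\begin{equation*}
vX=(v\ps{1}\rt X)\,v\ps{2}+(v\lt X)\qquad\text{in }U(W_n),
\end{equation*}
which is precisely the commutation rule of the bicrossed product $U(W_n)=U(\Fn)\bowtie U(\Fs)$ attached to the matched pair $W_n=\Fs\bowtie\Fn$ of \cite{MoscRang09,Majid-book}; at first order it is just the decomposition of $[w,X]$ into its $\Fs$- and $\Fn$-components. As $\Om_n^{\leq1}$ is a genuine $W_n$-module \cite{Fuks-book}, both sides act identically on $\om$, and the lemma follows.

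The hard part will be bookkeeping these three dualities at once and confirming that the abstract coaction $\Db(X_k)=X_k\ot1+Y_i^j\ot\eta^i_{jk}$ really is the transpose of the matched-pair move that carries $U(\Fn)$ past $X_k$ --- equivalently, matching the functions $\eta^i_{jk}$ with the structure constants of the brackets $[e_i^{jk_1\ldots k_r},e_\ell]$ of $W_n$. A secondary point demanding care is the non-degeneracy of the pairing on these infinite-dimensional spaces, which is exactly what forces the passage to topological tensor products; granting this, the evaluation-against-$v$ argument is legitimate. Alternatively, one could bypass the explicit computation altogether by appealing to the general principle of \cite{RangSutl,RangSutl-V}, whereby the transpose of any $U(\Fn)$-module underlying a $W_n$-module is automatically an induced $(\Fs,\Fc(N))$-module, so that the proof collapses to the two structural inputs used above.
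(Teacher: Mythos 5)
Your proposal is correct and follows essentially the same route as the paper: both arguments pair the identity $\Db(X\cdot\om)=X\bullet\Db(\om)$ against an arbitrary $v\in U(\Fn)$ via the non-degenerate pairing with $\Fc(N)$, reduce it to the matched-pair straightening identity $v\cdot(X\cdot\om)=(v\ps{1}\rt X)\cdot(v\ps{2}\rt\om)+(v\lt X)\cdot\om$ coming from $W_n=\Fs\bowtie\Fn$ (which the paper simply cites as \cite[(3.35)]{RangSutl-III} rather than rederiving), and conclude by non-degeneracy together with the fact that $\Om_n^{\leq1}$ is a genuine $W_n$-module. The extra bookkeeping you flag (matching $\eta^i_{jk}$ with structure constants, topological completions) is real but is exactly what the paper's citations absorb.
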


\begin{proof}
We first recall that $\Fc(N)$ being a $\Fs$-Hopf algebra was observed already in Proposition \ref{s-induced}. We are thus left to show \eqref{induced-module}. To this end we observe that
\begin{align*}
& \langle X \bullet (\om\ns{0} \ot \om\ns{1}),\, v\rangle = \\
& \langle X^{\pr{0}} \cdot \om\ns{0} \ot X^{\pr{1}}\om\ns{1},\,v\rangle + \langle \om\ns{0}\ot X\rt \om\ns{1},\,v\rangle = \\
& (v\ps{1}\rt X) \cdot (v\ps{2}\rt \om) + (X\lt v) \cdot \om = v \cdot (X \cdot \om) = \langle \Db(X \cdot \om),\, v\rangle
\end{align*}
for any $v\in U(\Fn)$, where the third equality follows from \cite[(3.35)]{RangSutl-III}. The claim thus follows from the non-degeneracy of the pairing between $U(\Fn)$ and $\Fc(N)$.
\end{proof}

\ni As a result of \cite[Prop. 3.4]{RangSutl}, $\Om_n^{\leq 1}$ is a left / right YD-module over the bicrossed product Hopf algebra $\Fc(N) \acl U(\Fs)$ via the action
\begin{equation*}
\Fc(N) \acl U(\Fs) \ot \Om_n^{\leq 1}  \lra \Om_n^{\leq 1}, \qquad (f\acl u) \cdot \om := \ve(f)u \cdot \om,
\end{equation*}
and the coaction
\begin{equation*}
\Om_n^{\leq 1} \lra \Om_n^{\leq 1} \ot \Fc(N) \acl U(\Fs), \qquad \Db(\om) := \om\ns{0} \ot (\om\ns{1}\acl 1).
\end{equation*}
Finally, since $(\d,1)$ is a MPI on the Hopf algebra $\Fc(N) \acl U(\Fs)$, see for instance \cite{ConnMosc98,MoscRang09} or \cite[Thm. 3.2]{RangSutl}, we conclude that $\Om_{n\d}^{\leq 1}:={}^1 \Cb_\d \ot \Om_n^{\leq1}$ is a right / left SAYD module over $\Fc(N) \acl U(\Fs)$ via the action
\begin{equation*}
\Om_{n\d}^{\leq 1} \ot \Fc(N) \acl U(\Fs) \lra \Om_{n\d}^{\leq 1}, \qquad \om \cdot (f\acl u) := \ve(f)\d(u\ps{1})S(u\ps{2}) \cdot \om,
\end{equation*}
and the coaction
\begin{equation*}
\Om_{n\d}^{\leq 1} \lra \Fc(N) \acl U(\Fs) \ot \Om_{n\d}^{\leq 1} , \qquad \Db(\om) := (S(\om\ns{1})\acl 1) \ot \om\ns{0}.
\end{equation*}

\section{Lie algebra cohomology $H^\ast(W_n,\Om_n^{\leq1})$}\label{Lie-cohom}

In this section we recall the Lie-algebra cohomology with coefficients. In particular, we shall discuss the cohomology of the infinite dimensional Lie algebra of formal vector fields, with coefficients in the space of formal differential 1-forms, \cite{GelfFuks70-II,Fuks-book}. 

\subsection{Lie algebra cohomology with coefficients}\label{Lie-cohom-coeff}

Let $\Fg$ be a Lie algebra, and $M$ a $\Fg$-module. Then the graded space
\begin{equation*}
C^\ast(\Fg,M)=\bigoplus_{k\geq 0}C^k(\Fg,M), \qquad C^k(\Fg,M):=\Hom(\wg^k\Fg,M)
\end{equation*}
is a differential graded space via
\begin{align*}
& d_{\rm CE}:C^k(\Fg,M) \lra C^{k+1}(\Fg,M), \\
& d_{\rm CE}c(\xi_1,\ldots, \xi_{k+1}) := \sum_{1\leq r < s \leq k+1} (-1)^{r+s-1}c([\xi_r,\xi_s],\xi_1,\ldots,\widehat{\xi_r},\ldots, \widehat{\xi_s},\ldots,\xi_{k+1}) \\
& \hspace{3.4cm}+ \sum_{t=1}^{k+1}(-1)^{t}\xi_t\cdot c(\xi_1,\ldots,\widehat{\xi_t},\ldots,\xi_{k+1}),
\end{align*}
or alternatively via
\begin{align*}
& d_{\rm CE}(m) = m\cdot X_i \ot \t^i, \\
& d_{\rm CE}(m\ot \eta) = m\cdot X_i \ot \t^i\wg \eta + m\ot d_{\rm DR}(\eta),
\end{align*}
where $d_{\rm DR}:\wg^p \Fg^\ast \to \wg^{p+1} \Fg^\ast$ is the deRham coboundary (which is a derivation of order 1) given by 
\begin{equation*}
d_{\rm DR}(\t^k) = \frac{1}{2}C^k_{ij}\t^i\wg\t^j.
\end{equation*}
The homology of the differential graded space $(C^\ast(\Fg,M),d_{\rm CE})$ is called the Lie algebra cohomology of $\Fg$, with coefficients in $M$, and is denoted by $H^\ast(\Fg,M)$.


\ni We shall recall from \cite{HochSerr53} the multiplicative structure on the Lie algebra cohomology. Let $M$, $M'$, and $P$ be $\Fg$-modules. Then $M$ and $M'$ are said to be paired to $P$ if there exists a bilinear mapping $M\times N \to P$, $(m,m')\mapsto m\cup m'$, such that
\begin{equation*}
\xi\cdot (m\cup m') := \xi\cdot m \cup m' + m\cup \xi\cdot m',
\end{equation*}
for any $m\in M$, any $m'\in M'$, and any $\xi \in \Fg$. Let also $S=\{s_1,\ldots,s_p\}$ be an ordered subset of integers in $\{1,2,\ldots,p+q\}$, and $T=\{t_1,t_2,\ldots,t_q\}$ be its ordered complement. For each $1\leq j\leq q$, let $S(j)$ denote the number of indices $i$ for which $s_i>t_j$, and let $\nu(S):=\sum_{j=1}^qS(j)$. Then,
\begin{equation}\label{Lie-multp}
(c \cup c')(\xi_1,\ldots,\xi_{p+q}) := \sum_S (-1)^{\nu(S)} c(\xi_{s_1},\ldots,\xi_{s_p})\cup c'(\xi_{t_1},\ldots,\xi_{t_q})
\end{equation}
defines an element $c\cup c' \in C^{p+q}(\Fg,P)$, called the cup product of $c\in C^p(\Fg,M)$ and $c'\in C^q(\Fg,M')$, with the property that
\begin{equation*}
d_{\rm CE}(c \cup c') = d_{\rm CE}c \cup c' + (-1)^p c\cup d_{\rm CE}c'.
\end{equation*}
Alternatively, if $c=m\ot \eta \in C^p(\Fg,M)$ and $c'=m'\ot \zeta \in C^q(\Fg,M')$, the cup product is given by
\begin{equation}
c\cup c' = m\cup m' \ot \eta \wg \z \in C^{p+q}(\Fg,P),
\end{equation}
see for instance \cite{CalaqueRossi-book}.

\ni In particular, the spaces $\Om_n^0$ and $\Om_n^1$ of formal differential forms are paired into $\Om_n^{\leq1}$, and thus the cohomology $H^\ast(W_n,\Om_n^{\leq1})$ possess a multiplicative structure, and a basis of $H^\ast(W_n,\Om_n^{\leq1})$ is given by $\lambda_k \in H^{2k-1}(W_n,\Om_n^0)$, $1 \leq k \leq n$, and $\mu \in H^1(W_n,\Om_n^1)$, subject to the relations
\begin{equation}\label{Vey-basis-cond}
\lambda_i\cup\lambda_j = -\lambda_j\cup \lambda_i, \qquad \lambda_k\cup\mu = \mu\cup\lambda_k.
\end{equation}
In particular, for $n=1$, the generators may be represented by
\begin{equation}\label{lambda}
\lambda(\xi) = \div(\xi),
\end{equation}
and
\begin{equation}\label{mu}
\mu(\xi) = d\div(\xi),
\end{equation}
see \cite[Thm. 2.2.7]{Fuks-book}.

\subsection{Lie algebra cohomology $H^\ast(\Fs\bowtie \Fn,\Om_n^{\leq1})$}

In this subsection we recall the bicomplex associated to the matched pair decomposition $W_n=\Fs\bowtie \Fn$, computing the Lie algebra cohomology of the Lie algebra $W_n$.

\ni Along the lines of \cite[Sect. 4.3]{RangSutl}, we consider the bicomplex
\begin{equation}\label{g-1-g-2-bicomplex}
\xymatrix{\vdots&\vdots&\vdots&\\
\Om_n^{\leq1}\ot \wdg^2\Fs^\ast \ar[r]^{\hD_{\rm CE}\;\;\;\;\,\,\,\,\,\,\,}\ar[u]^{\vDD_{\rm CE}}&\Om_n^{\leq1}\ot \wdg^2\Fs^\ast\ot \Fn^\ast \ar[r]^{\hD_{\rm CE}\;\;\;\;} \ar[u]^{\vDD_{\rm CE}}& \Om_n^{\leq1}\ot \wdg^2\Fs^\ast\ot \wdg^2\Fn^\ast \ar[r]^{\;\;\;\;\;\;\;\;\;\;\;\;\;\;\;\;\;\;\hD_{\rm CE}}\ar[u]^{\vDD_{\rm CE}}&\cdots\\
\Om_n^{\leq1}\ot \Fs^\ast \ar[r]^{\hD_{\rm CE}\,\,\,\,\,\,\,\,\,}\ar[u]^{\vDD_{\rm CE}}&\Om_n^{\leq1}\ot \Fs^\ast \ot \Fn^\ast \ar[r]^{\hD_{\rm CE}\,\,\,\,\,\,}\ar[u]^{\vDD_{\rm CE}} & \Om_n^{\leq1}\ot \Fs^\ast \ot \wdg^2\Fn^\ast \ar[r]^{\;\;\;\;\;\;\;\;\;\;\;\;\;\;\hD_{\rm CE}}\ar[u]^{\vDD_{\rm CE}}&\cdots\\
\Om_n^{\leq1} \ar[r]^{\hD_{\rm CE}}\ar[u]^{\vDD_{\rm CE}}& \Om_n^{\leq1}\ot \Fn^\ast \ar[r]^{\hD_{\rm CE}} \ar[u]^{\vDD_{\rm CE}}& \Om_n^{\leq1}\ot \wdg^2 \Fn^\ast\ar[r]^{\;\;\;\;\;\;\;\;\;\hD_{\rm CE}}\ar[u]^{\vDD_{\rm CE}}&\cdots}
\end{equation}

\ni The cohomology $H^\ast(W_n,\Om_n^{\leq1})$ can be computed by the total complex of the bicomplex \eqref{g-1-g-2-bicomplex}. This is achieved explicitly by
\begin{align}\label{natural-map}
& \natural:C^n(\Fs\bowtie \Fn, \Om_n^{\leq1})\lra {\rm Tot}^n(\Om_n^{\leq1}, \Fs^\ast,\Fn^\ast) \\\notag
& \natural(\Phi)(X_1,\ldots, X_p\mid \xi_1, \ldots, \xi_q)=\Phi(X_1\oplus 0,\ldots, X_p\oplus 0, 0\oplus\xi_1, \ldots, 0\oplus \xi_q),
\end{align}
whose inverse is given by
\begin{align*}
&\natural^{-1}(\om\ot \mu\ot\nu)(X_1\oplus\xi_1, \dots,X_{p+q}\oplus\xi_{p+q}) \\
&=\sum_{\s\in Sh(p,q)}(-1)^{\s}\om\mu(X_{\s(1)}, \dots,X_{\s(p)})\nu(\xi_{\s(p+1)}, \dots, \xi_{\s(p+q)}),
\end{align*}
where $Sh(p,q)$ denotes the set of $(p,q)$-shuffles. It follows from \cite[Lemma 2.7]{MoscRang11} that \eqref{natural-map} is an isomorphism of complexes.

\ni Finally, let us use \eqref{natural-map}, and its inverse, to carry the cup product construction \eqref{Lie-multp} on $C^\ast(\Fs\bowtie \Fn, \Om_n^{\leq1})$ to ${\rm Tot}^\ast(\Om_n^{\leq1}, \Fs^\ast,\Fn^\ast)$. Given any $a\ot \mu \ot \nu \in C^{p,q}(\Om_n^{\leq1}, \Fs^\ast,\Fn^\ast)$ in ${\rm Tot}^{p+q}(\Om_n^{\leq1}, \Fs^\ast,\Fn^\ast)$, and any $\om \ot \lambda \ot \rho \in C^{p',q'}(\Om_n^{\leq1}, \Fs^\ast,\Fn^\ast)$ in ${\rm Tot}^{p'+q'}(\Om_n^{\leq1}, \Fs^\ast,\Fn^\ast)$, we set
\begin{equation*}
(a\ot \mu \ot \nu) \cup (\om \ot \lambda \ot \rho) := \natural(\natural^{-1}(a\ot \mu \ot \nu) \cup \natural^{-1}(\om \ot \lambda \ot \rho)).
\end{equation*}
Accordingly,
\begin{align}\label{cup-total}
\begin{split}
& (a\ot \mu \ot \nu) \cup (\om \ot \lambda \ot \rho) := \natural(\natural^{-1}(a\ot \mu \ot \nu) \cup \natural^{-1}(\om \ot \lambda \ot \rho)) = \\
& \natural((a\ot \mu \wg \nu) \cup (\om \ot \lambda \wg \rho)) = \natural(a\om \ot \mu \wg \nu \wg \lambda \wg \rho)) = \\
& (-1)^{qp'}\natural (a\om \ot \mu\wg \lambda \wg \nu\wg \rho) = (-1)^{qp'} a\om \ot \mu\wg \lambda \ot \nu\wg \rho.
\end{split}
\end{align}

\section{Hopf-cyclic cohomology $HP(\Hc_n\cop,\Om_{n\d}^{\leq1})$}\label{Hopf-cyclic}

In this section we shall prove one of the main results of the paper, namely; a multiplicative structure on the bicomplex computing the Hopf-cyclic cohomology of $\Hc_n$.

\subsection{Hopf-cyclic bicomplex}

\ni Let $V$  be  a right-left SAYD module over $H$. Then,
\begin{equation}\label{aux-stand-Hopf-cyclic}
C(H,V) := \bigoplus_{q\geq 0} C^q(H,V), \quad C^q(H,V):= V\ot H^{\ot q}
\end{equation}
is a cocyclic module via
\begin{align*}
&\Fd_0(v\ot h^1\odots h^q)=v\ot 1\ot h^1\odots h^q,\\\notag
&\Fd_i(v\ot h^1\odots h^q)= v\ot h^1\odots h^i\ps{1}\ot h^i\ps{2}\odots h^q,  \\
&\Fd_{q+1}(v\ot h^1\odots h^q)=v\ns{0}\ot h^1\odots h^q\ot v\ns{-1},\\
&\Fs_j (v\ot h^1\odots h^q)= v\ot h^1\odots \ve(h^{j+1})\odots
h^q,\\
&\Ft (v\ot h^1\odots h^q)=v\ns{0}h^1\ps{1}\ot
S(h^1\ps{2})\cdot(h^2\odots h^q\ot v\ns{-1}).
\end{align*}
Using these operators one defines the Hochschild coboundary
\begin{equation*}
b: C^{q}_H(C,V)\ra C^{q+1}_H(C,V), \qquad b:=\sum_{i=0}^{q+1}(-1)^i\Fd_i, \\
\end{equation*}
and  the Connes boundary operator
\begin{align*}
&B: C^{q+1}_H(C,V)\ra C^q_H(C,V),\qquad B:=\left(\sum_{i=0}^{q}(-1)^{qi}\Ft_q^{i}\right) \Fs_{q}\Ft_{q+1}(\Id - (-1)^{q+1}\Ft_{q+1}).
\end{align*}
The cyclic cohomology of \eqref{aux-stand-Hopf-cyclic} is called the Hopf-cyclic cohomology of the Hopf algebra $H$, with coefficients in $V$, and it is denoted by $HC^\ast(H,V)$. Its periodized version, on the other hand, is denoted by $HP^\ast(H,V)$.

\ni Using the bicrossed-product structure of $\Hc_n$, it is shown in \cite{MoscRang09} that the Hopf-cyclic cohomology $HC^\ast(\Hc_n\cop, \Om_{n\d}^{\leq1})$ can be calculated by the diagonal subcomplex of the direct sum total of the bicomplex
\begin{align}\label{UF}
\begin{xy} \xymatrix{  \vdots\ar@<.6 ex>[d]^{\uparrow B} & \vdots\ar@<.6 ex>[d]^{\uparrow B}
 &\vdots \ar@<.6 ex>[d]^{\uparrow B} & &\\
\Om_{n\d}^{\leq1} \ot \Uc^{\ot 2} \ar@<.6 ex>[r]^{\hb}\ar@<.6
ex>[u]^{  \uparrow b  } \ar@<.6 ex>[d]^{\uparrow B}&
  \Om_{n\d}^{\leq1}\ot \Uc^{\ot 2}\ot \Fc   \ar@<.6 ex>[r]^{\hb}\ar@<.6 ex>[l]^{\hB}\ar@<.6 ex>[u]^{  \uparrow b  }
   \ar@<.6 ex>[d]^{\uparrow B}&\Om_{n\d}^{\leq1} \ot \Uc^{\ot 2}\ot\Fc^{\ot 2}
   \ar@<.6 ex>[r]^{~~\hb}\ar@<.6 ex>[l]^{\hB}\ar@<.6 ex>[u]^{  \uparrow b  }
   \ar@<.6 ex>[d]^{\uparrow B}&\ar@<.6 ex>[l]^{~~\hB} \hdots&\\
\Om_{n\d}^{\leq1} \ot \Uc \ar@<.6 ex>[r]^{\hb}\ar@<.6 ex>[u]^{  \uparrow b  }
 \ar@<.6 ex>[d]^{\uparrow B}&  \Om_{n\d}^{\leq1} \ot \Uc \ot\Fc \ar@<.6 ex>[r]^{\hb}
 \ar@<.6 ex>[l]^{\hB}\ar@<.6 ex>[u]^{  \uparrow b  } \ar@<.6 ex>[d]^{\uparrow B}
 &\Om_{n\d}^{\leq1}\ot \Uc \ot \Fc^{\ot 2}  \ar@<.6 ex>[r]^{~~\hb}\ar@<.6 ex>[l]^{\hB}\ar@<.6 ex>[u]^{  \uparrow b  }
  \ar@<.6 ex>[d]^{\uparrow B}&\ar@<.6 ex>[l]^{~~\hB} \hdots&\\
\Om_{n\d}^{\leq1}  \ar@<.6 ex>[r]^{\hb}\ar@<.6 ex>[u]^{  \uparrow b  }&
\Om_{n\d}^{\leq1}\ot\Fc \ar@<.6 ex>[r]^{\hb}\ar[l]^{\hB}\ar@<.6
ex>[u]^{  \uparrow b  }&\Om_{n\d}^{\leq1}\ot\Fc^{\ot 2}  \ar@<.6
ex>[r]^{~~\hb}\ar@<.6 ex>[l]^{\hB}\ar@<1 ex >[u]^{  \uparrow b  }
&\ar@<.6 ex>[l]^{~~\hB} \hdots& }
\end{xy}
\end{align}
with $\Uc:=U(\Fs)$, and $\Fc:=\Fc(N)$. The identification is given by
\begin{align}\label{PSI-1}\notag
& \Psi_{\acl}: D^n(U(\Fs),\Fc(N),\Om_{n\d}^{\leq1}) \lra C^n(\Hc_n\cop, \Om_{n\d}^{\leq1}), \\
&\Psi_{\acl}(\om\ot u^1 \ot \ldots u^n \ot f^1 \ot\cdots\ot f^n) =\\\notag
& \om\ot f^1\acl u^1\ns{0}\ot f^2u^1\ns{1}\acl u^2\ns{0} \odots f^n u^1\ns{n-1} \ldots u^{n-1}\ns{1}\acl u^n ,
\end{align}
whose inverse is
\begin{align}\label{PSI}\notag
&\Psi^{-1}_{\acl}:  C^n(\Hc_n\cop, \Om_{n\d}^{\leq1}) \lra D^n(U(\Fs),\Fc(N),\Om_{n\d}^{\leq1}), \\
&\Psi^{-1}_{\acl}(\om\ot   f^1\acl u^1\ot \ldots\ot f^n\acl u^n) = \\\notag
& \om\ot u^1\ns{0} \ot\cdots\ot u^{n-1}\ns{0}\ot u^n\ot f^1\ot\\\notag
& \hspace{1cm} f^2S(u^1\ns{n-1})\ot f^3S(u^1\ns{n-2}u^2\ns{n-2}) \ot\cdots\ot f^nS(u^1\ns{1} \dots u^{n-1}\ns{1}).
\end{align}
It follows from \cite[Prop. 4.4]{RangSutl} that the application of 
\begin{align} \label{antsym1}
\begin{split}
&\a: \Om_{n\d}^{\leq1} \ot \wg^{\ot\,p} \Fs\ot {\Fc(N)}^{\ot\,q} \lra \Om_{n\d}^{\leq1} \ot U(\Fs)^{\ot\,p}\ot {\Fc(N)}^{\ot\,q} \\
&\a(\om \ot X^1 \wg \ldots \wg X^p \ot f^1\ot\cdots\ot f^q)=  \frac{1}{p!} \sum_{\s\in S_p}(-1)^\s  \om \ot X^{\s(1)} \ot\cdots\ot X^{\s(p)} \ot f^1\ot\cdots\ot f^q
\end{split}
\end{align}
reduces the bicomplex \eqref{UF} to
\begin{align}\label{UF+}
\begin{xy} \xymatrix{  \vdots\ar[d]^{\p_{\rm CE}} & \vdots\ar[d]^{\p_{\rm CE}}
 &\vdots \ar[d]^{\p_{\rm CE}} & &\\
\Om_{n\d}^{\leq1} \ot \wg^2\Fs \ar[r]^{b_N\,\,\,\,\,\,\,\,\,\,\,\,\,\,\,\,\,\,} \ar[d]^{\p_{\rm CE}}& \Om_{n\d}^{\leq1}\ot \wg^2\Fs\ot{\Fc(N)}  \ar[r]^{b_N\,\,\,\,\,\,\,\,\,} \ar[d]^{\p_{\rm CE}}
 &\Om_{n\d}^{\leq1}\ot \wg^2\Fs\ot{\Fc(N)}^{\ot 2} \ar[r]^{\;\;\;\;\;\;\;\;\;\;\;\;\;\;\,\,\,\,\,\,\,\,\,\,\,\,\,\,\,\,\,\,b_N}   \ar[d]^{\p_{\rm CE}}& \hdots&\\
\Om_{n\d}^{\leq1} \ot \Fs \ar[r]^{b_N\,\,\,\,\,\,\,\,\,\,\,\,\,\,\,\,\,\,} \ar[d]^{\p_{\rm CE}}&  \Om_{n\d}^{\leq1}\ot \Fs\ot{\Fc(N)} \ar[r]^{b_N} \ar[d]^{\p_{\rm CE}}&\Om_{n\d}^{\leq1}\ot \Fs\ot{\Fc(N)}^{\ot 2}
   \ar[d]^{\p_{\rm CE}} \ar[r]^{\;\;\;\;\;\;\;\;\;\,\,\,\,\,\,\,\,\,\,\,\,\,\,\,\,\,\,\,\,\,\,\,\,\,\,b_N}& \hdots&\\
\Om_{n\d}^{\leq1} \ar[r]^{b_N}&  \Om_{n\d}^{\leq1}\ot{\Fc(N)} \ar[r]^{b_N}&\Om_{n\d}^{\leq1}\ot{\Fc(N)}^{\ot 2} \ar[r]^{\;\;\;\;\;\;b_N} & \hdots& }
\end{xy}
\end{align}
where
\begin{equation*}
\p_{\rm CE}: \Om_{n\d}^{\leq1}\ot \wg^p\Fs\ot{\Fc(N)}^{\ot q} \lra \Om_{n\d}^{\leq1}\ot \wg^{p-1}\Fs\ot{\Fc(N)}^{\ot q}
\end{equation*}
is the Lie algebra homology boundary of the Lie algebra $\Fs$, with coefficients in $\Om_{n\d}^{\leq1}\ot {\Fc(N)}^{\ot q}$, and
\begin{align*}
& b_N: \Om_{n\d}^{\leq1}\ot \wg^p\Fs\ot{\Fc(N)}^{\ot q} \lra \Om_{n\d}^{\leq1}\ot \wg^p \Fs\ot{\Fc(N)}^{\ot q+1} \\
& b_N(\om\ot \eta\ot f^1\ot\cdots\ot f^q) = \om\ot \eta\ot 1 \ot f^1\ot\cdots\ot f^q + \\
& \sum_{i=1}^q (-1)^i \om\ot \eta\ot f^1\ot\cdots\ot \D(f^i) \ot\cdots\ot f^q + \\
& (-1)^{q+1} \om\ns{0}\ot \eta\ns{0} \ot f^1\ot\cdots\ot f^q\ot S(\eta\ns{1})S(\om\ns{1}),
\end{align*}
see \cite[Prop. 4.4]{RangSutl}, or \cite[Prop. 3.21]{MoscRang09}. We recall here that the right $\Fc(N)$-coaction on $\Fs$ is given by \eqref{coact-g-F}, and it is extended to $\wg^\ast \Fs$ by multiplication.

\ni Finally, by the Poincar\'e duality, 
\begin{align}\label{Poincare-duality}
\begin{split}
& \FD_{\Fs}:\Om_{n\d}^{\leq1} \ot \wg^p \Fs^\ast \ot {\Fc(N)}^{\ot q} \lra \Om_{n\d}^{\leq1} \ot \wg^{n^2+n-p} \Fs \ot {\Fc(N)}^{\ot q} \\
& \FD_{\Fs}(\om\ot \eta \ot f^1\ot\cdots\ot f^q) = \om\ot \iota_\eta(\varpi) \ot f^1\ot\cdots\ot f^q,
\end{split}
\end{align}
where $\varpi\in \wg^{n^2+n}\Fs$ is the covolume form and $\iota_\eta:\wg^\ast \Fs\to \wg^{\ast-p}\Fs$ is the contruction by $\eta\in \wg^p\Fs^\ast$, we identify the total complex of the bicomplex \eqref{UF+} with that of 
\begin{align}\label{UF+*}
\begin{xy} \xymatrix{  \vdots & \vdots
 &\vdots &&\\
 \Om_{n\d}^{\leq1}\ot \wdg^2\Fs^\ast  \ar[u]^{d_{\rm CE}}\ar[r]^{b^\ast_N\,\,\,\,\,\,\,\,\,\,\;\;\;\;\;}&  \Om_{n\d}^{\leq1}\ot \wdg^2\Fs^\ast\ot{\Fc(N)}\ar[u]^{d_{\rm CE}} \ar[r]^{b^\ast_N\,\,\,\,\,\,\,\,\,\,}& \Om_{n\d}^{\leq1}\ot \wdg^2\Fs^\ast\ot{\Fc(N)}^{\ot 2} \ar[u]^{d_{\rm CE}} \ar[r]^{\;\;\;\;\;\;\;\;\;\;\;\;\;\,\,\,\,\,\,\,\,\,\,\,\,\,\,\,b^\ast_N} & \hdots&  \\
 \Om_{n\d}^{\leq1}\ot \Fs^\ast  \ar[u]^{d_{\rm CE}}\ar[r]^{b^\ast_N~~~~~}& \Om_{n\d}^{\leq1}\ot \Fs^\ast\ot{\Fc(N)} \ar[u]^{d_{\rm CE}} \ar[r]^{b^\ast_N}& \Om_{n\d}^{\leq1}\ot  \Fs^\ast\ot {\Fc(N)}^{\ot 2} \ar[u]^{d_{\rm CE}} \ar[r]^{\;\;\;\;\;\;\;\;\;\;\,\,\,\,\,\,\,\,\,\,\,\,\,\,\,b^\ast_N }& \hdots&  \\
   \Om_{n\d}^{\leq1}\ar[u]^{d_{\rm CE}}\ar[r]^{b^\ast_N~~~~~~~}& \Om_{n\d}^{\leq1}\ot {\Fc(N)} \ar[u]^{d_{\rm CE}}\ar[r]^{b^\ast_N}& \Om_{n\d}^{\leq1}\ot {\Fc(N)}^{\ot 2} \ar[u]^{d_{\rm CE}} \ar[r]^{\;\;\;\;\;\;\;\;\;\;b^\ast_N} & \hdots& }
\end{xy}
\end{align}
where 
\begin{align}\label{b_N-ast}
\begin{split}
& b^\ast_N: \Om_{n\d}^{\leq1}\ot \wg^p\Fs^\ast\ot{\Fc(N)}^{\ot q} \lra \Om_{n\d}^{\leq1}\ot \wg^p \Fs^\ast\ot{\Fc(N)}^{\ot q+1} \\
& b^\ast_N(\om\ot \eta\ot f^1\ot\cdots\ot f^q) = \om\ot \eta\ot 1 \ot f^1\ot\cdots\ot f^q + \\
& \sum_{i=1}^q (-1)^i \om\ot \eta\ot f^1\ot\cdots\ot \D(f^i) \odots f^q + \\
& (-1)^{q+1} \om\ns{0}\ot \eta\ns{0} \ot f^1\ot\cdots\ot f^q\ot S(\om\ns{1})\eta\ns{-1},
\end{split}
\end{align}
see \cite[Prop. 4.6]{RangSutl}, and
\begin{align}\label{d_CE}
\begin{split}
& d_{\rm CE}: \Om_{n\d}^{\leq1}\ot \wg^p\Fs^\ast\ot{\Fc(N)}^{\ot q} \lra \Om_{n\d}^{\leq1}\ot \wg^{p+1}\Fs^\ast\ot{\Fc(N)}^{\ot q} \\
& d_{\rm CE}(\om\ot \eta \ot \widetilde{f}) = \om \ot d_{\rm DR}(\eta) \ot \widetilde{f} - X_i\cdot\om\ot \upsilon^i\wg \eta \ot \widetilde{f} - \om\ot \upsilon^i\wg\eta \ot X_i\bullet\widetilde{f}
\end{split}
\end{align}
is the Lie algebra cohomology coboundary of the Lie algebra $\Fs$, with coefficients in $\Om_n^{\leq1}\ot {\Fc(N)}^{\ot q}$, see for instance \cite[(4.1)]{RangSutl}.
The map $d_{\rm DR}:\wg^p\Fs^\ast \to \wg^{p+1}\Fs^\ast$ is the deRham differential of forms, and the left $\Fc(N)$-coaction on $\wg^\ast \Fs^\ast$ is obtained by transposing the right $\Fc(N)$-coaction 
\begin{align}\label{F-coact-s}
\begin{split}
&\Fs^\ast \lra  \Fc(N)\ot \Fs^\ast, \\
&\t^i\mapsto 1 \ot \t^i, \qquad \t^i_j \mapsto 1 \ot \t^i_j + \eta^i_{jk}\ot \t^k,
\end{split}
\end{align}
which can be extended to $\wg^\ast \Fs^\ast$ multiplicatively.

\subsection{A multiplicative structure on $C^{\ast,\ast}(\Om_{n\d}^{\leq1},\Fs^\ast,\Fc(N))$}

In this subsection we introduce a multiplicative structure on the bicomplex
\begin{align}\label{bicomplex-Om-s-F}
\begin{split}
& C^{\ast,\ast}(\Om_{n\d}^{\leq1},\Fs^\ast,\Fc(N)) = \bigoplus_{p,q\geq 0} C^{p,q}(\Om_{n\d}^{\leq1},\Fs^\ast,\Fc(N)), \\
&\hspace{3cm} C^{p,q}(\Om_{n\d}^{\leq1},\Fs^\ast,\Fc(N)):=\Om_{n\d}^{\leq1} \ot\wg^p \Fs^\ast \ot \Fc(N)^{\ot\,q}
\end{split}
\end{align}
given by \eqref{UF+*}. To this end, for any $a\ot \eta \ot \widetilde{f} \in C^{p,q}(\Om_{n\d}^0,\Fs^\ast,\Fc(N))$, and $\om\ot \zeta \ot \widetilde{g} \in C^{p',q'}(\Om_{n\d}^1,\Fs^\ast,\Fc(N))$ let
\begin{equation}\label{cup-prod}
(a\ot \eta \ot \widetilde{f}) \cup (\om\ot \zeta \ot \widetilde{g}) := a\ns{0}\om\ot \eta\ns{0}\wg \zeta \ot \widetilde{f}\ot S(a\ns{1})\eta\ns{-1}\cdot \widetilde{g}.
\end{equation}

\begin{proposition}\label{prop-bN}
The horizontal coboundary \eqref{b_N-ast} acts as a graded derivation, \ie for any $a\ot \eta \ot \widetilde{f} \in C^{p,q}(\Om_{n\d}^0,\Fs^\ast,\Fc(N))$, and $\om\ot \zeta \ot \widetilde{g} \in C^{p',q'}(\Om_{n\d}^1,\Fs^\ast,\Fc(N))$,
\begin{align*}
& b^\ast_N\Big((a\ot \eta \ot \widetilde{f}) \cup (\om\ot \zeta \ot \widetilde{g})\Big) = \\
&\hspace{1cm} b^\ast_N\Big(a\ot \eta \ot \widetilde{f}\Big) \cup (\om\ot \zeta \ot \widetilde{g}) + (-1)^q (a\ot \eta \ot \widetilde{f}) \cup b^\ast_N\Big(\om\ot \zeta \ot \widetilde{g}\Big).
\end{align*}
\end{proposition}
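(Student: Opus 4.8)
The plan is to expand $b^\ast_N$ on both sides of the asserted identity according to its three constituent faces in \eqref{b_N-ast} and match the resulting terms one at a time. Writing $A=a\ot\eta\ot\widetilde f$ with $\widetilde f=f^1\odots f^q$ and $\Omega=\om\ot\zeta\ot\widetilde g$ with $\widetilde g=g^1\odots g^{q'}$, and decomposing $b^\ast_N=\partial_0+\sum_{i=1}^{q}(-1)^i\partial_i+(-1)^{q+1}\partial_{q+1}$ into the unit insertion $\partial_0$ at the front, the interior comultiplications $\partial_i$, and the twisted append $\partial_{q+1}$, the cup product \eqref{cup-prod} concatenates the $\Fc(N)$-legs into the single $(q+q')$-tensor $\widetilde f\ot S(a\ns{1})\eta\ns{-1}\cdot\widetilde g$. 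Thus $b^\ast_N(A\cup\Omega)$ is an alternating sum over $\partial_0$, the $q+q'$ interior faces, and $\partial_{q+1}$, and the task is to show it agrees with $b^\ast_N(A)\cup\Omega+(-1)^q(A\cup b^\ast_N(\Omega))$.

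First I would dispose of the interior contributions. The front insertion $\partial_0(A\cup\Omega)$ coincides verbatim with $\partial_0(A)\cup\Omega$, since prefixing the unit commutes with the cup and does not affect the coactions on $a,\eta$. Each comultiplication on a leg of the $\widetilde f$-block matches the corresponding term of $b^\ast_N(A)\cup\Omega$. For the legs lying in the $S(a\ns{1})\eta\ns{-1}\cdot\widetilde g$-block, comultiplying the $(q+j)$-th factor reproduces $(-1)^q\,A\cup$(the $j$-th comultiplication of $\Omega$); here one uses that the $\Fc(N)$-action on $\Fc(N)^{\ot q'}$ is diagonal through the coproduct, hence compatible with applying $\D$ to a single leg by coassociativity. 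The Leibniz sign $(-1)^q$ is exactly the shift aligning $\partial_{q+j}(A\cup\Omega)$ with $\partial_j(\Omega)$ on the $\Omega$-side.

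The crux is the junction, where the two boundary faces with no partner on the left-hand side must annihilate one another. I claim $\partial_{q+1}(A)\cup\Omega=A\cup\partial_0(\Omega)$, so that $(-1)^{q+1}\,\partial_{q+1}(A)\cup\Omega+(-1)^{q}\,A\cup\partial_0(\Omega)=0$. To see the equality, note $\partial_{q+1}(A)=a\ns{0}\ot\eta\ns{0}\ot\widetilde f\ot S(a\ns{1})\eta\ns{-1}$; cupping with $\Omega$ applies a second coaction to $a\ns{0}$ and $\eta\ns{0}$, after which coassociativity of the right $\Fc(N)$-coaction on $\Om_{n\d}^{\leq1}$ and of the left coaction \eqref{F-coact-s} on $\Fs^\ast$ lets me rewrite the two inserted $\Fc(N)$-factors as $S((a\ns{1})\ps{2})(\eta\ns{-1})\ps{1}$ and $S((a\ns{1})\ps{1})(\eta\ns{-1})\ps{2}$. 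Commutativity of $\Fc(N)$, so that $\D$ is an algebra map, together with $\D S=(S\ot S)\D\cop$, identifies these precisely with the two legs of $\D\big(S(a\ns{1})\eta\ns{-1}\big)$; but these are exactly the factors produced by the diagonal action on the unit-prefixed tensor, namely $S(a\ns{1})\eta\ns{-1}\cdot(1\ot\widetilde g)=A\cup\partial_0(\Omega)$. This mutual cancellation is the heart of the derivation property and is the step I expect to demand the most careful Sweedler bookkeeping.

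Finally, the two outermost tails match: $(-1)^{q+q'+1}\partial_{q+1}(A\cup\Omega)$ equals $(-1)^q\,A\cup\big((-1)^{q'+1}\partial_{q'+1}(\Omega)\big)$, both carrying the sign $(-1)^{q+q'+1}$. This remaining check rests on the multiplicativity of the right coaction on $\Om_{n\d}^{\leq1}$ applied to the product $a\ns{0}\om$ and of the coaction on $\Fs^\ast$ applied to $\eta\ns{0}\wg\zeta$, so that the single appended factor $S((a\om)\ns{1})(\eta\wg\zeta)\ns{-1}$ produced on the left factors compatibly through the cup twist on the right. Collecting the matched $\partial_0$-term, the two blocks of interior faces, the cancelled junction, and the matched tail yields the claimed graded-Leibniz identity.
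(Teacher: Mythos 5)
Your proof is correct and follows essentially the same route as the paper's: both expand $b^\ast_N(A\cup\Omega)$ into its faces, match the unit-insertion and interior faces against the corresponding faces of $b^\ast_N(A)\cup\Omega$ and $(-1)^q A\cup b^\ast_N(\Omega)$, and reduce everything to the junction identity $\partial_{q+1}(A)\cup\Omega = A\cup\partial_0(\Omega)$, which the paper exhibits as the two unmatched terms (with signs $(-1)^{q+1}$ and $(-1)^q$) in its regrouped expression and which you verify by the same Sweedler relabelling using coassociativity, $\D S=(S\ot S)\D\cop$, and commutativity of $\Fc(N)$. The only difference is expository: you isolate the cancellation as a named step, whereas the paper leaves it implicit in the comparison of its two displayed expansions.
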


\begin{proof}
Given $a\ot \eta \ot \widetilde{f} \in C^{p,q}(\Om_{n\d}^0,\Fs^\ast,\Fc(N))$, let 
\begin{equation*}
\D_i(\widetilde{f}) := f^1\odots \D(f^i) \odots f^q.
\end{equation*}
We then note that
\begin{align*}
& b^\ast_N\Big((a\ot \eta \ot \widetilde{f}) \cup (\om\ot \zeta \ot \widetilde{g})\Big) =\\
& a\ns{0}\om\ot \eta\ns{0}\wg \zeta \ot 1 \ot \widetilde{f}\ot S(a\ns{1})\eta\ns{-1}\cdot \widetilde{g} + \\
& \sum_{i=1}^q(-1)^i a\ns{0}\om\ot \eta\ns{0}\wg \zeta \ot \D_i(\widetilde{f}) \ot S(a\ns{1})\eta\ns{-1}\cdot \widetilde{g} + \\
& \sum_{i=q+1}^{q+q'}(-1)^i a\ns{0}\om\ot \eta\ns{0}\wg \zeta \ot \widetilde{f} \ot S(a\ns{1})\eta\ns{-1}\cdot \D_i(\widetilde{g}) + \\
& (-1)^{q+q'+1} (a\ns{0}\om)\ns{0}\ot \eta\ns{0}\wg \zeta\ns{0} \ot \widetilde{f}\ot \\
&\hspace{3cm} S(a\ns{1})\eta\ns{-2}\cdot \widetilde{g} \ot S((a\ns{0}\om)\ns{1})\eta\ns{-1}\zeta\ns{-1},
\end{align*}
which can be rewritten as
\begin{align*}
& a\ns{0}\om\ot \eta\ns{0}\wg \zeta \ot 1 \ot \widetilde{f}\ot S(a\ns{1})\eta\ns{-1}\cdot \widetilde{g} + \\
& \sum_{i=1}^q(-1)^i a\ns{0}\om\ot \eta\ns{0}\wg \zeta \ot \D_i(\widetilde{f}) \ot S(a\ns{1})\eta\ns{-1}\cdot \widetilde{g} + \\
& (-1)^{q+1} a\ns{0}\ns{0}\om \ot \eta\ns{0}\ns{0} \wg \zeta \ot \widetilde{f} \ot \\
&\hspace{3cm} S(a\ns{1})\eta\ns{-1} \ot S(a\ns{0}\ns{1})\eta\ns{0}\ns{-1}\cdot\widetilde{g}) + \\
& (-1)^q \Big\{a\ns{0}\om \ot \eta\ns{0} \wg \zeta \ot \widetilde{f} \ot S(a\ns{1})\eta\ns{-1}\cdot (1 \ot \widetilde{g}) + \\ 
& \sum_{i=1}^{q'}(-1)^i a\ns{0}\om\ot \eta\ns{0}\wg \zeta \ot \widetilde{f} \ot S(a\ns{1})\eta\ns{-1}\cdot \D_i(\widetilde{g}) + \\
& (-1)^{q'+1} (a\om)\ns{0}\ot \eta\ns{0}\wg \zeta\ns{0} \ot \widetilde{f}\ot \\
&\hspace{3cm} S(a\ns{1})\eta\ns{-2}\cdot \widetilde{g} \ot S((a\ns{0}\om)\ns{1})\eta\ns{-1}\zeta\ns{-1}\Big\},
\end{align*}
the first three lines of which being $b^\ast_N\Big(a\ot \eta \ot \widetilde{f}\Big) \cup (\om\ot \zeta \ot \widetilde{g})$, and the last three being $(-1)^q (a\ot \eta \ot \widetilde{f}) \cup b^\ast_N\Big(\om\ot \zeta \ot \widetilde{g}\Big)$.
\end{proof}

\noindent On the next move we deal with the vertical coboundary. To this end, we record a series of lemmas below. The first one is about the action \eqref{bullet}.

\begin{lemma}\label{lemma-bullet}
For any $X\in \Fs$, any $\widetilde{f} \in \Fc(N)^{\ot\,r}$, and any $\widetilde{g} \in \Fc(N)^{\ot\,s}$, 
\begin{equation*}
X\bullet (\widetilde{f}\ot \widetilde{g}) = X\ns{0}\bullet \widetilde{f}\ot X\ns{1}\cdot \widetilde{g} + \widetilde{f}\ot X\bullet \widetilde{g}.
\end{equation*}
\end{lemma}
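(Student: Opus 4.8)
The plan is to prove the identity by induction on the length $r$ of the tuple $\widetilde{f}$, reducing each step to the defining formula \eqref{bullet} of the action $\bullet$ together with the fact that $\Db$ makes $U(\Fs)$ an $\Fc(N)$-comodule coalgebra (hence the coaction is coassociative). Throughout I read $X\ns{1}\cdot\widetilde{g}$ as the diagonal action of $X\ns{1}\in\Fc(N)$ on $\widetilde{g}\in\Fc(N)^{\ot s}$, that is $X\ns{1}\cdot(g^1\ot\cdots\ot g^s)=X\ns{1}\ps{1}g^1\ot\cdots\ot X\ns{1}\ps{s}g^s$, which is exactly the way the $\Fc(N)$-components of the coaction of $X$ enter the right-hand legs in \eqref{bullet}.

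For the base case $r=1$ I would expand $X\bullet(f^1\ot\widetilde{g})$ straight from \eqref{bullet}. Since $X\in\Fs$ is primitive, the implicit Sweedler summation over $X\ps{1}\ot\cdots\ot X\ps{s+1}$ is the sum of the $s+1$ terms carrying $X$ in a single slot and the unit in the others. The term with $X$ in the slot of $f^1$ contributes $X\ns{0}\rt f^1$ in the first leg, while the coaction of this single $X$ propagates into the remaining $s$ legs; by coassociativity of $\Db$ this propagation is precisely $X\ns{1}\cdot\widetilde{g}$, giving $(X\ns{0}\rt f^1)\ot X\ns{1}\cdot\widetilde{g}$. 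Each term with $X$ in a slot of $\widetilde{g}$ leaves $f^1$ untouched, and summing them reassembles $f^1\ot X\bullet\widetilde{g}$.

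For the inductive step I would write $\widetilde{f}=f^1\ot\widetilde{f}'$ with $\widetilde{f}'\in\Fc(N)^{\ot(r-1)}$ and first apply the case $r=1$ to $X\bullet\big(f^1\ot(\widetilde{f}'\ot\widetilde{g})\big)$, obtaining $(X\ns{0}\rt f^1)\ot X\ns{1}\cdot(\widetilde{f}'\ot\widetilde{g})+f^1\ot X\bullet(\widetilde{f}'\ot\widetilde{g})$. In the first term I split the diagonal action as $X\ns{1}\cdot(\widetilde{f}'\ot\widetilde{g})=X\ns{1}\ps{1}\cdot\widetilde{f}'\ot X\ns{1}\ps{2}\cdot\widetilde{g}$ and invoke coassociativity $X\ns{0}\ot X\ns{1}\ps{1}\ot X\ns{1}\ps{2}=X\ns{0}\ns{0}\ot X\ns{0}\ns{1}\ot X\ns{1}$; in the second term I apply the induction hypothesis to $X\bullet(\widetilde{f}'\ot\widetilde{g})$. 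Collecting the two pieces whose last leg is $X\ns{1}\cdot\widetilde{g}$, and recognizing (via the base case read in reverse) that $(X\ns{0}\ns{0}\rt f^1)\ot X\ns{0}\ns{1}\cdot\widetilde{f}'+f^1\ot X\ns{0}\bullet\widetilde{f}'=X\ns{0}\bullet\widetilde{f}$, yields $X\ns{0}\bullet\widetilde{f}\ot X\ns{1}\cdot\widetilde{g}$, while the leftover $f^1\ot\widetilde{f}'\ot X\bullet\widetilde{g}=\widetilde{f}\ot X\bullet\widetilde{g}$ is the second summand.

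I expect the main obstacle to be the Sweedler bookkeeping in the inductive step: one has to keep careful track of which part of the single coaction of $X$ is consumed inside $\widetilde{f}$ (producing $X\ns{0}\bullet\widetilde{f}$) and which residual part survives to act diagonally on $\widetilde{g}$ (producing $X\ns{1}\cdot\widetilde{g}$). This separation is legitimate precisely because $\Db$ is a coassociative coaction and the diagonal $\Fc(N)$-action is multiplicative, $f\cdot(\widetilde{f}'\ot\widetilde{g})=f\ps{1}\cdot\widetilde{f}'\ot f\ps{2}\cdot\widetilde{g}$; once these two compatibilities are set up, the matching of terms is forced.
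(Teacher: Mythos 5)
Your proof is correct and follows essentially the same route as the paper: the recursion you establish in the base case $r=1$, namely $X\bullet(f^1\ot\widetilde{h})=(X\ns{0}\rt f^1)\ot X\ns{1}\cdot\widetilde{h}+f^1\ot X\bullet\widetilde{h}$ for primitive $X$, is exactly the single displayed identity in the paper's proof, and your induction on $r$ merely spells out what the paper dismisses with ``the claim then follows immediately.''
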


\begin{proof}
It follows at once from the definition of the action \eqref{bullet} that
\begin{align*}
& X\bullet (f^1\odots f^q) = (1\acl X)\cdot (f^1\odots f^q) = \\
& X\ns{0} \rt f^1 \ot X\ns{1}\cdot (f^2\odots f^q) + f^1\ot X\bullet (f^2\odots f^q).
\end{align*}
The claim then follows immediately.
\end{proof}

\noindent The rest of the lemmas point out some auxiliary results by the commutativity of the horizontal and the vertical coboundary maps of the bicomplex \eqref{UF+*}.

\begin{lemma}\label{lemma-DR}
For any $\eta \in \wg^p\Fs^\ast$,
\begin{equation*}
d_{\rm DR}(\eta)\ns{-1} \ot d_{\rm DR}(\eta)\ns{0} = \eta\ns{-1}\ot d_{\rm DR}(\eta\ns{0}) - X_i\rt \eta\ns{-1}\ot \upsilon^i\wg \eta\ns{0}.
\end{equation*}
\end{lemma}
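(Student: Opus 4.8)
The plan is to reduce the identity to the algebra generators of $\wg^\bullet\Fs^\ast$ by a Leibniz argument, and then to recognise the resulting generator--identities as the transpose of the statement that the coaction $\Db\colon\Fs\to\Fs\ot\Fc(N)$ of \eqref{coact-g-F} is a morphism of Lie algebras, which is precisely the content of Proposition \ref{s-induced}. Write $\Db(\eta)=\eta\ns{-1}\ot\eta\ns{0}$ for the (multiplicative) left $\Fc(N)$-coaction on $\wg^\bullet\Fs^\ast$, and introduce the three maps $\wg^p\Fs^\ast\to\Fc(N)\ot\wg^{p+1}\Fs^\ast$
\begin{align*}
A(\eta)&:=d_{\rm DR}(\eta)\ns{-1}\ot d_{\rm DR}(\eta)\ns{0},\\
B(\eta)&:=\eta\ns{-1}\ot d_{\rm DR}(\eta\ns{0}),\\
C(\eta)&:=X_i\rt\eta\ns{-1}\ot\upsilon^i\wg\eta\ns{0},
\end{align*}
where, as in \eqref{d_CE}, $\{X_i\}$ is a basis of the \emph{whole} of $\Fs$ and $\{\upsilon^i\}$ is its dual basis, so that the asserted identity reads $A=B-C$. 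The first step is to check that each of $A$, $B$, $C$ is a graded derivation relative to $\Db$, \ie
\begin{equation*}
F(\eta\wg\eta')=F(\eta)\odot\Db(\eta')+(-1)^{|\eta|}\Db(\eta)\odot F(\eta'),
\end{equation*}
where $\odot$ multiplies the $\Fc(N)$-legs and wedges the $\Fs^\ast$-legs. For $A$ and $B$ this is immediate from $d_{\rm DR}$ being a graded derivation and $\Db$ being multiplicative; for $C$ it uses in addition that $\Fs$ acts on $\Fc(N)$ by derivations (so $X_i\rt(\eta\ns{-1}\eta'\ns{-1})$ splits), the only sign coming from moving the $1$-form $\upsilon^i$ past $\eta\ns{0}$. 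Consequently $\Psi:=A-B+C$ is a graded derivation relative to $\Db$, and since $\wg^\bullet\Fs^\ast$ is generated in degree one it suffices to prove $\Psi=0$ on the generators $\t^i$ and $\t^i_j$ of \eqref{F-coact-s}.

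On $\t^i$ the coaction is trivial, $\Db(\t^i)=1\ot\t^i$, so $C(\t^i)=0$ (because $X_i\rt 1=0$), while $A(\t^i)-B(\t^i)$ collapses, after inserting $d_{\rm DR}(\t^i)=\t^i_j\wg\t^j$ and $\Db(\t^i_j)=1\ot\t^i_j+\eta^i_{jk}\ot\t^k$, to the single term $\eta^i_{jk}\ot\t^k\wg\t^j$, which vanishes because $\eta^i_{jk}=\eta^i_{kj}$ is symmetric in $j,k$ while $\t^k\wg\t^j$ is antisymmetric. On $\t^i_j$ one expands $A(\t^i_j)=\Db(d_{\rm DR}\t^i_j)$ using $d_{\rm DR}(\t^i_j)=\t^i_b\wg\t^b_j$ together with \eqref{F-coact-s}, and sorts the result according to the two wedge--types $\t^\bullet\wg\t^\bullet$ and $\t^\bullet_\bullet\wg\t^\bullet$; matching against $B(\t^i_j)-C(\t^i_j)$ --- recalling that the sum in $C$ now also runs over the $Y_p^q$-part of the basis, producing $\t^\bullet_\bullet\wg\t^\bullet$ terms through $Y_p^q\rt\eta^i_{jk}$ --- reduces the claim to two families of structure identities among the $\eta^\bullet_{\bullet\bullet}$.

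The pure part needs the quadratic relation expressing $X_k\rt\eta^i_{jl}-X_l\rt\eta^i_{jk}$ through $\eta^i_{bk}\eta^b_{jl}$, and the mixed part needs the explicit value of $Y_p^q\rt\eta^i_{jc}$; both are obtained at once by evaluating the Lie--map condition $\Db([Z,Z'])=[\Db Z,\Db Z']$, with the bracket \eqref{top-bracket}, on $[X_k,X_l]=0$ and on $[Y_p^q,X_c]=\d^q_c X_p$ respectively, and reading off the coefficients of $X$ and of $Y$ (the vanishing of the $X$-coefficients uses once more the symmetry of $\eta^i_{jk}$). Thus the degree--one case of the lemma is exactly the transpose of Proposition \ref{s-induced}, and the higher--degree case follows formally by the derivation property above. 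I expect the only real difficulty to be bookkeeping: keeping the two wedge--types apart and remembering that the correction sum in \eqref{d_CE} (hence in $C$) runs over a basis of \emph{all} of $\Fs$ rather than only its abelian part, since it is precisely the $Y$-contributions that close up the mixed terms. (Alternatively one could extract the identity from the already established commutativity $d_{\rm CE}b^\ast_N=b^\ast_N d_{\rm CE}$ of the bicomplex \eqref{UF+*}, isolating the terms that carry $d_{\rm DR}$ together with the coaction on the $\Fs^\ast$-leg; the direct route above seems cleaner.)
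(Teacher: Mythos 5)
Your argument is correct, but it follows a genuinely different route from the paper's. The paper proves Lemma \ref{lemma-DR} in two lines by exploiting the commutativity of the horizontal and vertical coboundaries of the bicomplex \eqref{UF+*} (already in place from the cited results): specializing to trivial coefficients, it computes $b_N^\ast d_{\rm CE}(\eta) = d_{\rm DR}(\eta)\ot 1 - d_{\rm DR}(\eta)\ns{0}\ot d_{\rm DR}(\eta)\ns{-1}$ on one hand and $d_{\rm CE}b_N^\ast(\eta) = d_{\rm DR}(\eta)\ot 1 - d_{\rm DR}(\eta\ns{0})\ot \eta\ns{-1} + \upsilon^i\wg\eta\ns{0}\ot X_i\rt\eta\ns{-1}$ on the other, and compares --- this is exactly the alternative you set aside in your closing parenthetical. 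Your direct route is nevertheless sound: the verification that $A$, $B$, $C$ are graded derivations relative to the multiplicative coaction is valid (for $C$ it indeed only needs that $\Fs$ acts on the commutative algebra $\Fc(N)$ by derivations), the check on $\t^i$ (cancellation of $\eta^i_{jk}\ot\t^k\wg\t^j$ by the symmetry of $\eta^i_{jk}$ in $j,k$) is correct, and the identification of the $\t^i_j$ case with the transpose of the Lie-map condition $\Db([Z,Z'])=[\Db Z,\Db Z']$ for the bracket \eqref{top-bracket}, i.e.\ with the content of Proposition \ref{s-induced}, is the right duality --- though you leave that last piece of bookkeeping unexecuted. What the paper's argument buys is brevity, deferring all structure to the fact that \eqref{UF+*} is a bicomplex; what yours buys is an explanation of \emph{why} the lemma holds, namely that it is precisely the dual, on $\wg^\bullet\Fs^\ast$, of the first Lie--Hopf axiom. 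Your remark that the correction sum $X_i\rt\eta\ns{-1}\ot\upsilon^i\wg\eta\ns{0}$ must run over a basis of all of $\Fs$, the $Y_p^q$ included, is also correct and is the detail most easily gotten wrong in the direct computation.
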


\begin{proof}
We use the commutativity of the horizontal and the vertical coboundary maps of \eqref{UF+*}. Fixing a trivial coefficient, we have on one hand
\begin{equation*}
b_N^\ast d_{\rm CE}(\eta) = b_N^\ast(d_{\rm DR}(\eta)) = d_{\rm DR}(\eta) \ot 1 - d_{\rm DR}(\eta)\ns{0} \ot d_{\rm DR}(\eta)\ns{-1},
\end{equation*}
and on the other hand
\begin{align*}
& d_{\rm CE}b_N^\ast(\eta) = d_{\rm CE}(\eta \ot 1 - \eta\ns{0}\ot \eta\ns{-1}) = \\
&\hspace{2cm} d_{\rm DR}(\eta) \ot 1 - d_{\rm DR}(\eta\ns{0}) \ot \eta\ns{-1} + \upsilon^i\wg \eta\ns{0} \ot X_i\rt\eta\ns{-1}.
\end{align*}
The result follows.
\end{proof}

\begin{lemma}\label{lemma-X-dot-a}
For any $a\in \Om_n^0$, and any $\eta\in \wg^p\Fs^\ast$,
\begin{align*}
& (X_i\cdot a)\ns{0} \ot (\upsilon^i\wg \eta)\ns{0} \ot S((X_i\cdot a)\ns{1})(\upsilon^i\wg \eta)\ns{-1} = \\
& X_i\cdot a\ns{0} \ot \upsilon^i\wg \eta\ns{0} \ot S(a\ns{1})\eta\ns{-1} +  a\ns{0} \ot \upsilon^i\wg \eta\ns{0} \ot X_i\rt S(a\ns{1})\eta\ns{-1}.
\end{align*} 
\end{lemma}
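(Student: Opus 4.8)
The plan is to derive this identity, exactly as in the proof of Lemma \ref{lemma-DR}, from the commutativity of the horizontal coboundary $b_N^\ast$ and the vertical coboundary $d_{\rm CE}$ of the bicomplex \eqref{UF+*}; the only difference is that here I must keep the nontrivial coefficient $a\in\Om_n^0\subseteq\Om_{n\d}^{\leq1}$ rather than fixing a trivial one. Concretely, I would start from the element $a\ot\eta$ in bidegree $(p,0)$ and write out both $b_N^\ast d_{\rm CE}(a\ot\eta)$ and $d_{\rm CE}b_N^\ast(a\ot\eta)$ using the explicit formulas \eqref{d_CE} and \eqref{b_N-ast}. Along the way I would use that $X_i\bullet 1 = X_i\rt 1 = 0$ (the $\Fs$-action on $\Fc(N)$ is by derivations), so that the $\bullet$-term of $d_{\rm CE}$ contributes nothing on the $1$-factor produced by $b_N^\ast$.

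After expanding, the copies of $a\ot d_{\rm DR}(\eta)\ot 1$ and of $-X_i\cdot a\ot\upsilon^i\wg\eta\ot 1$ cancel between the two sides, and so does a common term $-a\ns{0}\ot d_{\rm DR}(\eta\ns{0})\ot S(a\ns{1})\eta\ns{-1}$ once the coaction legs are brought to a comparable form. What is left, after isolating the target expression $(X_i\cdot a)\ns{0}\ot(\upsilon^i\wg\eta)\ns{0}\ot S((X_i\cdot a)\ns{1})(\upsilon^i\wg\eta)\ns{-1}$ on one side, is an identity among the terms coming from the coactions on $d_{\rm DR}(\eta)$ and on $S(a\ns{1})\eta\ns{-1}$.

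The two new ingredients, beyond the bookkeeping, are as follows. First, I would invoke Lemma \ref{lemma-DR} to rewrite $d_{\rm DR}(\eta)\ns{0}\ot d_{\rm DR}(\eta)\ns{-1}$ in terms of $d_{\rm DR}(\eta\ns{0})\ot\eta\ns{-1}$ and the correction $-\upsilon^i\wg\eta\ns{0}\ot X_i\rt\eta\ns{-1}$; paired against $a\ns{0}$ and $S(a\ns{1})$ this produces a term $a\ns{0}\ot\upsilon^i\wg\eta\ns{0}\ot S(a\ns{1})(X_i\rt\eta\ns{-1})$. Second, since $\rt$ acts by derivations, I would expand by the Leibniz rule $X_j\rt\big(S(a\ns{1})\eta\ns{-1}\big) = (X_j\rt S(a\ns{1}))\eta\ns{-1} + S(a\ns{1})(X_j\rt\eta\ns{-1})$. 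The decisive cancellation is that the correction term produced by Lemma \ref{lemma-DR} is exactly the negative of the second Leibniz summand, leaving precisely $a\ns{0}\ot\upsilon^i\wg\eta\ns{0}\ot(X_i\rt S(a\ns{1}))\eta\ns{-1}$, which is the second term of the claimed identity (read with $X_i\rt S(a\ns{1})\eta\ns{-1}$ meaning $(X_i\rt S(a\ns{1}))\eta\ns{-1}$), while the term $X_i\cdot a\ns{0}\ot\upsilon^i\wg\eta\ns{0}\ot S(a\ns{1})\eta\ns{-1}$ survives untouched as the first.

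I expect the main obstacle to be precisely this matching of coaction terms: tracking which coproduct or coaction leg each Sweedler index refers to after applying $b_N^\ast$ and $d_{\rm CE}$ in the two orders, and confirming that the Lemma \ref{lemma-DR} correction and the Leibniz splitting conspire to cancel. The signs demand some care as well, since the outermost coaction leg of $b_N^\ast$ carries $(-1)^{q+1}$ and the $d_{\rm DR}$-term shifts the $\wg$-degree; but these become routine once the coaction indices are correctly aligned.
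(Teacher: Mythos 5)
Your proposal is correct and follows essentially the same route as the paper: compute $b_N^\ast d_{\rm CE}(a\ot\eta)$ and $d_{\rm CE}b_N^\ast(a\ot\eta)$ on the bidegree-$(p,0)$ element, invoke Lemma \ref{lemma-DR} for the coaction on $d_{\rm DR}(\eta)$, cancel the common terms, and extract the claim from the commutativity of the two coboundaries. Your explicit remark that the Leibniz rule for $\rt$ converts $X_i\rt(S(a\ns{1})\eta\ns{-1}) - S(a\ns{1})(X_i\rt\eta\ns{-1})$ into $(X_i\rt S(a\ns{1}))\eta\ns{-1}$ is exactly the (implicit) final step of the paper's comparison.
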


\begin{proof}
On one hand we have
\begin{align*}
& b_N^\ast d_{\rm CE} (a\ot \eta) = b_N^\ast\Big(a\ot d_{\rm DR}(\eta) - X_i\cdot a \ot \upsilon^i\wg \eta\Big) = \\
& a\ot d_{\rm DR}(\eta) \ot 1 - a\ns{0}\ot d_{\rm DR}(\eta\ns{0}) \ot S(a\ns{1})\eta\ns{-1} + \\
& a\ns{0}\ot \upsilon^i\wg \eta\ns{0} \ot S(a\ns{1})(X_i\rt \eta\ns{-1}) - X_i\cdot a \ot \upsilon^i\wg \eta + \\
& (X_i\cdot a)\ns{0} \ot (\upsilon^i\wg \eta)\ns{0} \ot S((X_i\cdot a)\ns{1})(\upsilon^i\wg \eta)\ns{-1},
\end{align*}
where we used Lemma \ref{lemma-DR} on the second equality, and on the other hand,
\begin{align*}
& d_{\rm CE} b_N^\ast (a\ot \eta) = d_{\rm CE} \Big(a\ot \eta\ot 1 - a\ns{0}\ot \eta\ns{0} \ot S(a\ns{1}) \eta\ns{-1}\Big) = \\
& a\ot d_{\rm DR}(\eta)\ot 1 - X_i\cdot a\ot \upsilon^i\wg \eta\ot 1 - a\ns{0}\ot d_{\rm DR}(\eta\ns{0}) \ot S(a\ns{1}) \eta\ns{-1} + \\
& X_i\cdot a\ns{0}\ot \upsilon^i\wg \eta\ns{0} \ot S(a\ns{1}) \eta\ns{-1} + a\ns{0}\ot \upsilon^i\wg \eta\ns{0} \ot X_i\rt (S(a\ns{1}) \eta\ns{-1}).
\end{align*}
In view of the commutativity of the horizontal and the vertical coboundaries of the bicomplex \eqref{UF+*}, a comparison of the two equations yields the claim.
\end{proof}

\begin{lemma}\label{lemma-upsilon}
For any $\widetilde{f}\in \Fc(N)^{\ot\,q}$,
\begin{equation*}
\upsilon^i\ns{0} \ot X_i\bullet \widetilde{f} \ot \upsilon^i\ns{-1} = \upsilon^i \ot X_i\bullet \widetilde{f} \ot 1.
\end{equation*}
\end{lemma}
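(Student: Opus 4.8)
The plan is to read the statement off the explicit transpose coaction \eqref{F-coact-s}, using that it is dual to \eqref{coact-g-F} with respect to the pairing between $\Fs$ and $\Fs^\ast$. The conceptual input is the invariance of the canonical element $X_i\ot\upsilon^i\in\Fs\ot\Fs^\ast$: coacting on its first leg by the right coaction \eqref{coact-g-F} produces the same element of $\Fs\ot\Fs^\ast\ot\Fc(N)$ as coacting on its second leg by the left coaction \eqref{F-coact-s}, i.e.
\[
X_i\ns{0}\ot\upsilon^i\ot X_i\ns{1}=X_i\ot\upsilon^i\ns{0}\ot\upsilon^i\ns{-1}.
\]
This is checked at once on the basis $\{X_k,Y_i^j\}$ and its dual $\{\t^k,\t^i_j\}$, the only nontrivial contribution coming from $\Db(X_k)=X_k\ot 1+Y_i^j\ot\eta^i_{jk}$ on one side and from the summand $\eta^i_{jk}\ot\t^k$ of $\Db(\t^i_j)$ on the other.

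First I would let the $\Fs$-leg of this canonical element act on $\widetilde f$ through the action \eqref{bullet}; after permuting the first two tensor legs the displayed invariance becomes
\[
\upsilon^i\ns{0}\ot(X_i\bullet\widetilde f)\ot\upsilon^i\ns{-1}=\upsilon^i\ot(X_i\ns{0}\bullet\widetilde f)\ot X_i\ns{1},
\]
so that the assertion of the lemma is precisely that the right-hand side collapses to $\upsilon^i\ot(X_i\bullet\widetilde f)\ot 1$.

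Next I would split the sum according to the two families of generators. By \eqref{coact-g-F} the coaction is trivial on the generators $Y_i^j$, so the only terms with $X_i\ns{1}\ne 1$ are those coming from $\Db(X_k)$, and the crux is to show that these correction terms are absorbed by the $\Fc(N)$-factors produced by the $\bullet$-action. For this I would unwind $X_i\ns{0}\bullet\widetilde f\ot X_i\ns{1}$ with the help of Lemma \ref{lemma-bullet} and the definition \eqref{bullet}, and reduce using the counit normalisation of the coaction. The main obstacle is exactly this step: one must control the iterated coaction built into \eqref{bullet} for $\widetilde f\in\Fc(N)^{\ot q}$ with $q>1$, and verify that the $X_i\ns{1}$ are absorbed uniformly across all $q$ slots and not merely the first.

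An alternative route, consistent with the way Lemmas \ref{lemma-DR} and \ref{lemma-X-dot-a} are established, is to extract the identity directly from the commutativity $b^\ast_N d_{\rm CE}=d_{\rm CE}b^\ast_N$ of the coboundaries of the bicomplex \eqref{UF+*}. Applying this to $\om\ot 1\ot\widetilde f$, the last summand $-\om\ot\upsilon^i\ot X_i\bullet\widetilde f$ of $d_{\rm CE}$ in \eqref{d_CE} feeds the coaction-carrying last term of $b^\ast_N$ in \eqref{b_N-ast}, which is the only place where $\upsilon^i\ns{0}$ and $\upsilon^i\ns{-1}$ appear; matching it against the corresponding term of $d_{\rm CE}b^\ast_N$ then isolates the claimed equality. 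I expect this second approach to be the shorter one, since it bypasses the explicit combinatorics of \eqref{bullet}.
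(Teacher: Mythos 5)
Your second, ``alternative'' route is precisely the proof the paper gives: one fixes a trivial coefficient, applies the commutativity $b_N^\ast d_{\rm CE}=d_{\rm CE}b_N^\ast$ of the bicomplex \eqref{UF+*} to $\widetilde f$ itself (sitting in bidegree $(0,q)$), expands $d_{\rm CE}b_N^\ast(\widetilde f)$ on the two extreme terms $1\ot\widetilde f$ and $\widetilde f\ot 1$ of $b_N^\ast(\widetilde f)$ by means of Lemma \ref{lemma-bullet}, and matches the only coaction-carrying term of $b_N^\ast d_{\rm CE}(\widetilde f)$, namely $\upsilon^i\ns{0}\ot X_i\bullet\widetilde f\ot\upsilon^i\ns{-1}$, against $\upsilon^i\ot X_i\bullet(\widetilde f\ot 1)$. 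Had you carried this computation out you would have reproduced the paper's argument essentially verbatim; as written it is only a plan.

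The genuine issue is with your primary route, and you have put your finger on it yourself. The reduction via the invariance of the canonical element $X_i\ot\upsilon^i$ is correct and turns the lemma into the assertion
\[
\upsilon^i\ot X_i\ns{0}\bullet\widetilde f\ot X_i\ns{1}\;=\;\upsilon^i\ot X_i\bullet\widetilde f\ot 1 ,
\]
that is, into the vanishing of the correction terms $\t^k\ot\big(Y_i^j\bullet\widetilde f\big)\ot\eta^i_{jk}$ produced by $\Db(X_k)=X_k\ot 1+Y_i^j\ot\eta^i_{jk}$. This is not a routine bookkeeping step: these terms carry a nontrivial $\Fc(N)$-leg and there is no visible mechanism in \eqref{bullet} that absorbs them slot by slot (already for $n=q=1$ one would have to kill $\t^{-1}\ot(Y\rt f)\ot\delta_1$). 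The paper does not attempt this absorption at all; in its computation the collapse happens inside $X_i\bullet(\widetilde f\ot 1)$, where the last tensor factor is the unit of $\Fc(N)$ and Lemma \ref{lemma-bullet} is invoked to discard the coaction leg $X_i\ns{1}$ against it. So the step you flag as ``the main obstacle'' is exactly the entire content of the lemma, your primary route does not close it, and the uniform absorption across the $q$ slots that you envisage is not the mechanism the paper uses. You should either complete the commutativity argument in full detail, or confront the displayed identity head-on --- noting that, as your own reduction makes transparent, it is equivalent to the lemma and is where any residual doubt about the statement is concentrated.
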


\begin{proof}
We have
\begin{align*}
& b_N^\ast d_{\rm CE}(\widetilde{f}) = b_N^\ast\Big(-\upsilon^i\ot X_i\bullet \widetilde{f}\Big) = -\upsilon^i\ot 1 \ot X_i\bullet \widetilde{f} + \upsilon^i\ns{0} \ot X_i\bullet \widetilde{f} \ot \upsilon^i\ns{-1},
\end{align*}
and 
\begin{align*}
& d_{\rm CE}b_N^\ast(\widetilde{f}) = d_{\rm CE}\Big(1 \ot \widetilde{f} - \widetilde{f} \ot 1\Big) =  - \upsilon^i \ot X_i\bullet (1 \ot \widetilde{f}) + \upsilon^i\ot X_i\bullet (\widetilde{f}\ot 1) = \\
& - \upsilon^i \ot 1 \ot X_i\bullet  \widetilde{f} + \upsilon^i\ot X_i\bullet \widetilde{f}\ot 1,
\end{align*}
where we used Lemma \ref{lemma-bullet} on the last equation. The result then follows once again by the commutativity of the horizontal and the vertical coboundaries of \eqref{UF+*}.
\end{proof}

\begin{proposition}\label{prop-dCE}
The vertical coboundary \eqref{d_CE} acts as a graded differential, \ie for any $a\ot \eta \ot \widetilde{f} \in C^{p,q}(\Om_{n\d}^0,\Fs^\ast,\Fc(N))$, and $\om\ot \zeta \ot \widetilde{g} \in C^{p',q'}(\Om_{n\d}^1,\Fs^\ast,\Fc(N))$,
\begin{align*}
& d_{\rm CE}\Big((a\ot \eta \ot \widetilde{f}) \cup (\om\ot \zeta \ot \widetilde{g})\Big) = \\
&\hspace{1cm} d_{\rm CE}\Big(a\ot \eta \ot \widetilde{f}\Big) \cup (\om\ot \zeta \ot \widetilde{g}) + (-1)^p (a\ot \eta \ot \widetilde{f}) \cup d_{\rm CE}\Big(\om\ot \zeta \ot \widetilde{g}\Big).
\end{align*}
\end{proposition}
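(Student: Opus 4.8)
The plan is to expand both sides of the asserted identity directly from the definitions \eqref{cup-prod} and \eqref{d_CE}, and then to reconcile the resulting coaction components by means of Lemmas \ref{lemma-bullet}, \ref{lemma-DR}, \ref{lemma-X-dot-a}, and \ref{lemma-upsilon}, following the same pattern as the proof of Proposition~\ref{prop-bN}. Writing $c := a\ot \eta \ot \widetilde{f}$ and $c' := \om\ot \zeta \ot \widetilde{g}$, the cup product $c\cup c'$ carries $a\ns{0}\om$ in the $\Om_{n\d}^{\leq1}$-slot, $\eta\ns{0}\wg \zeta$ in the $\wg^\ast\Fs^\ast$-slot, and $\widetilde{f}\ot S(a\ns{1})\eta\ns{-1}\cdot \widetilde{g}$ in the $\Fc(N)$-slot. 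Applying \eqref{d_CE} therefore produces three groups of terms: a $d_{\rm DR}$-term acting on $\eta\ns{0}\wg\zeta$, an $\Fs$-action term $X_i\cdot (a\ns{0}\om)$ in the $\Om_{n\d}^{\leq1}$-slot, and a $\bullet$-term $X_i\bullet(\widetilde{f}\ot S(a\ns{1})\eta\ns{-1}\cdot\widetilde{g})$ in the $\Fc(N)$-slot.

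First I would split each of these three groups into two by a Leibniz-type rule. For the $d_{\rm DR}$-term I use that $d_{\rm DR}$ is a derivation of degree $1$ and that the coaction \eqref{F-coact-s} preserves the $\Fs^\ast$-degree, so that $d_{\rm DR}(\eta\ns{0}\wg\zeta)=d_{\rm DR}(\eta\ns{0})\wg\zeta+(-1)^p\,\eta\ns{0}\wg d_{\rm DR}(\zeta)$. For the $\Fs$-action term I use the pairing of $\Om_n^0$ and $\Om_n^1$ into $\Om_n^{\leq1}$ recorded in Subsection~\ref{Lie-cohom-coeff}, which gives $X_i\cdot(a\ns{0}\om)=(X_i\cdot a\ns{0})\om+a\ns{0}(X_i\cdot\om)$. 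For the $\bullet$-term I invoke Lemma~\ref{lemma-bullet} with its second argument taken to be $S(a\ns{1})\eta\ns{-1}\cdot\widetilde{g}$. This exhibits $d_{\rm CE}(c\cup c')$ as a sum of six terms.

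Next I would expand the right-hand side: each of the three terms of $d_{\rm CE}(c)$ is cup-multiplied with $c'$, and each of the three terms of $d_{\rm CE}(c')$ is cup-multiplied from the left by $c$ with the overall sign $(-1)^p$, again via \eqref{cup-prod}, giving six terms as well. The three commutativity Lemmas are precisely what matches the two collections of six terms: Lemma~\ref{lemma-DR} rewrites the coaction on $d_{\rm DR}(\eta)$ appearing in $(a\ot d_{\rm DR}(\eta)\ot\widetilde{f})\cup c'$ so that it reproduces the $d_{\rm DR}(\eta\ns{0})\wg\zeta$ contribution on the left together with a correction of $\upsilon^i$-type; Lemma~\ref{lemma-X-dot-a} rewrites the coaction on $X_i\cdot a$ in $(X_i\cdot a\ot \upsilon^i\wg\eta\ot\widetilde{f})\cup c'$ as the sum of the $(X_i\cdot a\ns{0})\om$-contribution and a term in which $X_i$ acts through $\rt$ on $S(a\ns{1})\eta\ns{-1}$; and Lemma~\ref{lemma-upsilon} collapses the spurious coaction on $\upsilon^i$ in the $\bullet$-term, turning $\upsilon^i\ns{0}\ot X_i\bullet\widetilde{f}\ot\upsilon^i\ns{-1}$ into $\upsilon^i\ot X_i\bullet\widetilde{f}\ot 1$.

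The main obstacle, exactly as in Proposition~\ref{prop-bN}, is the bookkeeping of the entangled coaction components: both $a\ns{1}$, from the $\Om_{n\d}^{\leq1}$-coaction, and $\eta\ns{-1}$, from the $\Fs^\ast$-coaction, are deposited into the single $\Fc(N)$-slot of the cup product, so when $d_{\rm CE}$ introduces an $X_i$ one must carefully decide whether it acts on $\Om_{n\d}^{\leq1}$, on $S(a\ns{1})\eta\ns{-1}$ via $\rt$, or on $\widetilde{g}$ via $\bullet$. The three Lemmas, each obtained from the commutativity of the horizontal and vertical coboundaries of \eqref{UF+*}, are the identities that let one transport these $X_i$-actions across the tensor factors; once they are applied and the signs from the degree $p$ of $\eta$ are tracked, the six terms on each side coincide, which establishes the graded Leibniz rule.
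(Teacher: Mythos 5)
Your proposal is correct and follows essentially the same route as the paper's own proof: expand $d_{\rm CE}$ of the cup product, split the three resulting groups of terms via the Leibniz property of $d_{\rm DR}$, the derivation property of the $\Fs$-action on $\Om_n^{\leq1}$, and Lemma \ref{lemma-bullet}, then match against the expansions of $d_{\rm CE}(c)\cup c'$ and $c\cup d_{\rm CE}(c')$ using Lemmas \ref{lemma-DR}, \ref{lemma-X-dot-a}, and \ref{lemma-upsilon} in exactly the roles you assign them. The only minor imprecision is the term count (the $\bullet$-term splits once more, via the bicrossed-product relation $X_i\bullet(\widetilde{h}\cdot\widetilde{g}) = (X_i\rt \widetilde{h})\cdot\widetilde{g} + \widetilde{h}\cdot(X_i\bullet\widetilde{g})$, yielding seven rather than six summands on the left), which does not affect the argument.
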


\begin{proof}
We first observe that
\begin{align*}
& d_{\rm CE}\Big((a\ot \eta \ot \widetilde{f}) \cup (\om\ot \zeta \ot \widetilde{g})\Big) = \\
& a\ns{0}\om\ot d_{\rm DR}\Big(\eta\ns{0}\wg \zeta\Big) \ot \widetilde{f}\ot S(a\ns{1})\eta\ns{-1}\cdot \widetilde{g} + \\
& \hspace{0cm}- X_i \cdot (a\ns{0}\om)\ot \upsilon^i \wg \eta\ns{0}\wg \zeta \ot \widetilde{f}\ot S(a\ns{1})\eta\ns{-1}\cdot \widetilde{g} + \\
& - a\ns{0}\om\ot \upsilon^i\wg\eta\ns{0}\wg \zeta \ot X_i\bullet(\widetilde{f}\ot S(a\ns{1})\eta\ns{-1}\cdot \widetilde{g}).
\end{align*}
Using the fact that the deRham coboundary is a graded differential we arrive at
\begin{align*}
& d_{\rm CE}\Big((a\ot \eta \ot \widetilde{f}) \cup (\om\ot \zeta \ot \widetilde{g})\Big) = \\
& a\ns{0}\om\ot d_{\rm DR}\Big(\eta\ns{0}\Big)\wg \zeta \ot \widetilde{f}\ot S(a\ns{1})\eta\ns{-1}\cdot \widetilde{g} + \\
& (-1)^p a\ns{0}\om\ot \eta\ns{0}\wg d_{\rm DR}\Big(\zeta\Big) \ot \widetilde{f}\ot S(a\ns{1})\eta\ns{-1}\cdot \widetilde{g} + \\
& \hspace{1.5cm}- X_i \cdot (a\ns{0}\om)\ot \upsilon^i \wg \eta\ns{0}\wg \zeta \ot \widetilde{f}\ot S(a\ns{1})\eta\ns{-1}\cdot \widetilde{g} + \\
& - a\ns{0}\om\ot \upsilon^i\wg\eta\ns{0}\wg \zeta \ot X_i\bullet(\widetilde{f}\ot S(a\ns{1})\eta\ns{-1}\cdot \widetilde{g}).
\end{align*}
Next, we recall that the Lie algebra $\Fs\subseteq W_n$ atcs on $\Om_n^0$ by derivations. Thus,
\begin{align*}
& d_{\rm CE}\Big((a\ot \eta \ot \widetilde{f}) \cup (\om\ot \zeta \ot \widetilde{g})\Big) = \\
& a\ns{0}\om\ot d_{\rm DR}\Big(\eta\ns{0}\Big)\wg \zeta \ot \widetilde{f}\ot S(a\ns{1})\eta\ns{-1}\cdot \widetilde{g} + \\
& (-1)^p a\ns{0}\om\ot \eta\ns{0}\wg d_{\rm DR}\Big(\zeta\Big) \ot \widetilde{f}\ot S(a\ns{1})\eta\ns{-1}\cdot \widetilde{g} + \\
& \hspace{1.5cm}- (X_i \cdot a\ns{0})\om\ot \upsilon^i \wg \eta\ns{0}\wg \zeta \ot \widetilde{f}\ot S(a\ns{1})\eta\ns{-1}\cdot \widetilde{g} + \\
& \hspace{1.5cm}- a\ns{0}(X_i \cdot \om)\ot \upsilon^i \wg \eta\ns{0}\wg \zeta \ot \widetilde{f}\ot S(a\ns{1})\eta\ns{-1}\cdot \widetilde{g} + \\
& - a\ns{0}\om\ot \upsilon^i\wg\eta\ns{0}\wg \zeta \ot X_i\bullet(\widetilde{f}\ot S(a\ns{1})\eta\ns{-1}\cdot \widetilde{g}).
\end{align*}
Then using Lemma \ref{lemma-bullet} we get
\begin{align*}
& d_{\rm CE}\Big((a\ot \eta \ot \widetilde{f}) \cup (\om\ot \zeta \ot \widetilde{g})\Big) = \\
& a\ns{0}\om\ot d_{\rm DR}\Big(\eta\ns{0}\Big)\wg \zeta \ot \widetilde{f}\ot S(a\ns{1})\eta\ns{-1}\cdot \widetilde{g} + \\
& (-1)^p a\ns{0}\om\ot \eta\ns{0}\wg d_{\rm DR}\Big(\zeta\Big) \ot \widetilde{f}\ot S(a\ns{1})\eta\ns{-1}\cdot \widetilde{g} + \\
& \hspace{1.5cm}- (X_i \cdot a\ns{0})\om\ot \upsilon^i \wg \eta\ns{0}\wg \zeta \ot \widetilde{f}\ot S(a\ns{1})\eta\ns{-1}\cdot \widetilde{g} + \\
& \hspace{1.5cm}- a\ns{0}(X_i \cdot \om)\ot \upsilon^i \wg \eta\ns{0}\wg \zeta \ot \widetilde{f}\ot S(a\ns{1})\eta\ns{-1}\cdot \widetilde{g} + \\
& - a\ns{0}\om\ot \upsilon^i\wg\eta\ns{0}\wg \zeta \ot {X_i}\ns{0}\bullet \widetilde{f}\ot X_i\ns{1}S(a\ns{1})\eta\ns{-1}\cdot \widetilde{g} + \\
& - a\ns{0}\om\ot \upsilon^i\wg\eta\ns{0}\wg \zeta \ot \widetilde{f}\ot X_i\bullet(S(a\ns{1})\eta\ns{-1}\cdot \widetilde{g}),
\end{align*}
where
\begin{align*}
& a\ns{0}\om\ot \upsilon^i\wg\eta\ns{0}\wg \zeta \ot \widetilde{f}\ot X_i\bullet(S(a\ns{1})\eta\ns{-1}\cdot \widetilde{g}) = \\
& a\ns{0}\om\ot \upsilon^i\wg\eta\ns{0}\wg \zeta \ot \widetilde{f}\ot (1 \acl X_i)(S(a\ns{1})\eta\ns{-1}\acl 1)\cdot \widetilde{g} = \\
& a\ns{0}\om\ot \upsilon^i\wg\eta\ns{0}\wg \zeta \ot \widetilde{f}\ot (X_i\rt S(a\ns{1})\eta\ns{-1})\cdot \widetilde{g} + \\
& a\ns{0}\om\ot \upsilon^i\wg\eta\ns{0}\wg \zeta \ot \widetilde{f}\ot S(a\ns{1})\eta\ns{-1}\cdot (X_i\bullet \widetilde{g}).
\end{align*} 
As a result,
\begin{align}\label{d(cup)}
\begin{split}
& d_{\rm CE}\Big((a\ot \eta \ot \widetilde{f}) \cup (\om\ot \zeta \ot \widetilde{g})\Big) = \\
& a\ns{0}\om\ot d_{\rm DR}\Big(\eta\ns{0}\Big)\wg \zeta \ot \widetilde{f}\ot S(a\ns{1})\eta\ns{-1}\cdot \widetilde{g} + \\
& (-1)^p a\ns{0}\om\ot \eta\ns{0}\wg d_{\rm DR}\Big(\zeta\Big) \ot \widetilde{f}\ot S(a\ns{1})\eta\ns{-1}\cdot \widetilde{g} + \\
& \hspace{1.5cm}- (X_i \cdot a\ns{0})\om\ot \upsilon^i \wg \eta\ns{0}\wg \zeta \ot \widetilde{f}\ot S(a\ns{1})\eta\ns{-1}\cdot \widetilde{g} + \\
& \hspace{1.5cm}- a\ns{0}(X_i \cdot \om)\ot \upsilon^i \wg \eta\ns{0}\wg \zeta \ot \widetilde{f}\ot S(a\ns{1})\eta\ns{-1}\cdot \widetilde{g} + \\
& - a\ns{0}\om\ot \upsilon^i\wg\eta\ns{0}\wg \zeta \ot {X_i}\ns{0}\bullet \widetilde{f}\ot X_i\ns{1}S(a\ns{1})\eta\ns{-1}\cdot \widetilde{g} + \\
& - a\ns{0}\om\ot \upsilon^i\wg\eta\ns{0}\wg \zeta \ot \widetilde{f}\ot (X_i\rt S(a\ns{1})\eta\ns{-1})\cdot \widetilde{g} + \\
& - a\ns{0}\om\ot \upsilon^i\wg\eta\ns{0}\wg \zeta \ot \widetilde{f}\ot S(a\ns{1})\eta\ns{-1}\cdot (X_i\bullet \widetilde{g}).
\end{split}
\end{align}
On the other hand,
\begin{align*}
& d_{\rm CE}\Big(a\ot \eta \ot \widetilde{f}\Big) \cup (\om\ot \zeta \ot \widetilde{g}) = \\
& (a\ot d_{\rm DR}(\eta) \ot \widetilde{f} - X_i\rt a\ot \upsilon^i\wg\eta \ot \widetilde{f} - a\ot \upsilon^i\wg\eta \ot X_i\bullet\widetilde{f}) \cup (\om\ot \zeta \ot \widetilde{g}) = \\
& a\ns{0}\om\ot d_{\rm DR}(\eta)\ns{0} \wg \zeta \ot \widetilde{f} \ot S(a\ns{1})d_{\rm DR}(\eta)\ns{-1} \cdot \widetilde{g} + \\
& \hspace{1cm}- (X_i\cdot a)\ns{0}\om \ot (\upsilon^i\wg\eta)\ns{0} \wg \zeta \ot \widetilde{f} \ot S((X_i\rt a)\ns{1})(\upsilon^i\wg\eta)\ns{-1}\cdot \widetilde{g} + \\
& - a\ns{0}\om\ot (\upsilon^i\wg\eta)\ns{0} \ot X_i\bullet\widetilde{f} \ot S(a\ns{1})(\upsilon^i\wg\eta)\ns{-1}\cdot \widetilde{g},
\end{align*}
from which we arrive, in view of Lemma \ref{lemma-DR}, at
\begin{align*}
& d_{\rm CE}\Big(a\ot \eta \ot \widetilde{f}\Big) \cup (\om\ot \zeta \ot \widetilde{g}) = \\
& a\ns{0}\om\ot d_{\rm DR}(\eta\ns{0}) \wg \zeta \ot \widetilde{f} \ot S(a\ns{1})\eta\ns{-1} \cdot \widetilde{g} + \\
& - a\ns{0}\om\ot \upsilon^i \wg \eta\ns{0} \wg \zeta \ot \widetilde{f} \ot S(a\ns{1})(X_i\rt \eta\ns{-1}) \cdot \widetilde{g} + \\
& \hspace{.5cm}- (X_i\cdot a)\ns{0}\om \ot (\upsilon^i\wg\eta)\ns{0} \wg \zeta \ot \widetilde{f} \ot S((X_i\rt a)\ns{1})(\upsilon^i\wg\eta)\ns{-1}\cdot \widetilde{g} + \\
& - a\ns{0}\om\ot (\upsilon^i\wg\eta)\ns{0} \ot X_i\bullet\widetilde{f} \ot S(a\ns{1})(\upsilon^i\wg\eta)\ns{-1}\cdot \widetilde{g}.
\end{align*}
Invoking next Lemma \ref{lemma-X-dot-a} and Lemma \ref{lemma-upsilon},
\begin{align}\label{d-cup}
\begin{split}
& d_{\rm CE}\Big(a\ot \eta \ot \widetilde{f}\Big) \cup (\om\ot \zeta \ot \widetilde{g}) = \\
& a\ns{0}\om\ot d_{\rm DR}(\eta\ns{0}) \wg \zeta \ot \widetilde{f} \ot S(a\ns{1})\eta\ns{-1} \cdot \widetilde{g} + \\
& - a\ns{0}\om\ot \upsilon^i \wg \eta\ns{0} \wg \zeta \ot \widetilde{f} \ot S(a\ns{1})(X_i\rt \eta\ns{-1}) \cdot \widetilde{g} + \\
& \hspace{1cm}- (X_i\cdot a\ns{0})\om \ot \upsilon^i\wg\eta\ns{0} \wg \zeta \ot \widetilde{f} \ot S(a\ns{1})\eta\ns{-1}\cdot \widetilde{g} + \\
& \hspace{1cm}- a\ns{0}\om \ot (\upsilon^i\wg\eta)\ns{0} \wg \zeta \ot \widetilde{f} \ot (X_i\rt S(a\ns{1}))\eta\ns{-1}\cdot \widetilde{g} + \\
& - a\ns{0}\om\ot \upsilon^i\wg\eta\ns{0} \ot X_i\bullet\widetilde{f} \ot S(a\ns{1})\eta\ns{-1}\cdot \widetilde{g}.
\end{split}
\end{align}
Finally we see that
\begin{align}\label{cup-d}
\begin{split}
& (a\ot \eta \ot \widetilde{f}) \cup d_{\rm CE}\Big(\om\ot \zeta \ot \widetilde{g}\Big) = \\
& (a\ot \eta \ot \widetilde{f}) \cup (\om\ot d_{\rm DR}(\zeta) \ot \widetilde{g} - X_i\cdot\om\ot \upsilon^i\wg\zeta \ot \widetilde{g} - \om\ot \upsilon^i\wg\zeta \ot X_i\bullet\widetilde{g}) = \\
& a\ns{0}\om\ot \eta\ns{0}\wg d_{\rm DR}(\zeta) \ot S(a\ns{1})\eta\ns{-1}\cdot\widetilde{g} + \\
& \hspace{1cm}- a\ns{0}(X_i\cdot\om)\ot \eta\ns{0}\wg\upsilon^i\wg\zeta \ot S(a\ns{1})\eta\ns{-1}\cdot\widetilde{g} + \\
& -a\ns{0}\om\ot \eta\ns{0}\wg\upsilon^i\wg\zeta \ot S(a\ns{1})\eta\ns{-1}\cdot(X_i\bullet\widetilde{g}).
\end{split}
\end{align}
The claim now follows immediately from the comparison of \eqref{d(cup)}, \eqref{d-cup} and \eqref{cup-d}.
\end{proof}

\ni We are ready to express the main result of the section.

\begin{theorem}
The coboundary
\begin{equation*}
d_{\rm CE} + (-1)^pb_N^\ast:C^{p,q}(\Om_{n\d}^{\leq1},\Fs^\ast,\Fc(N)) \to C^{p+1,q}(\Om_{n\d}^{\leq1},\Fs^\ast,\Fc(N)) \oplus C^{p,q+1}(\Om_{n\d}^{\leq1},\Fs^\ast,\Fc(N))
\end{equation*}
of the total complex of the bicomplex \eqref{UF+*} acts as a graded differential with respect to the product structure given by
\begin{equation*}
(a\ot \eta \ot \widetilde{f}) \ast (\om\ot \zeta \ot \widetilde{g}) = (-1)^{qp'}(a\ot \eta \ot \widetilde{f}) \cup (\om\ot \zeta \ot \widetilde{g})
\end{equation*}
for $a\ot \eta \ot \widetilde{f} \in \Om_{n\d}^0\ot \wg^p\Fs^\ast\ot \Fc(N)^{\ot\,q}$, and $\om\ot \zeta \ot \widetilde{g} \in \Om_{n\d}^1\ot \wg^{p'}\Fs^\ast\ot \Fc(N)^{\ot\,q'}$.
\end{theorem}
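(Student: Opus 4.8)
The plan is to derive the graded Leibniz rule for the total coboundary $D := d_{\rm CE} + (-1)^pb_N^\ast$ directly from Proposition \ref{prop-bN} and Proposition \ref{prop-dCE}, the only remaining ingredient being the bookkeeping of Koszul signs. Throughout I write $x := a\ot \eta \ot \widetilde{f} \in C^{p,q}(\Om_{n\d}^0,\Fs^\ast,\Fc(N))$ and $y := \om\ot \zeta \ot \widetilde{g} \in C^{p',q'}(\Om_{n\d}^1,\Fs^\ast,\Fc(N))$, and set the total degree $|x| := p+q$. I first note that $d_{\rm CE}$ and $b_N^\ast$ each preserve the $\Om_{n\d}^0$/$\Om_{n\d}^1$ decomposition of $\Om_{n\d}^{\leq1}$ (the $\Fs$-action and the $\Fc(N)$-coaction respect the form-degree), so that $D(x)$ remains of $\Om_{n\d}^0$-type, $D(y)$ of $\Om_{n\d}^1$-type, and every $\ast$-product appearing below is well-defined in the sense of \eqref{cup-prod}. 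I also record that $D$ is a genuine coboundary: since $d_{\rm CE}$ and $b_N^\ast$ commute (as already exploited in Lemmas \ref{lemma-DR}, \ref{lemma-X-dot-a}, and \ref{lemma-upsilon}) and $d_{\rm CE}^2=0=(b_N^\ast)^2$, the sign convention $d_{\rm CE}+(-1)^pb_N^\ast$ forces $D^2=0$ on the total complex of \eqref{UF+*}. Thus the substance of the theorem is the derivation property.

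Second, I would expand the left-hand side. Since $x \ast y = (-1)^{qp'}\,x \cup y$ lives in bidegree $(p+p',\,q+q')$, applying $D$ inserts the internal sign $(-1)^{p+p'}$ before $b_N^\ast$, giving
\begin{equation*}
D(x \ast y) = (-1)^{qp'}\Big(d_{\rm CE}(x \cup y) + (-1)^{p+p'}\,b_N^\ast(x \cup y)\Big).
\end{equation*}
Feeding in Proposition \ref{prop-dCE}, namely $d_{\rm CE}(x \cup y) = d_{\rm CE}(x)\cup y + (-1)^p\,x\cup d_{\rm CE}(y)$, together with Proposition \ref{prop-bN}, namely $b_N^\ast(x \cup y) = b_N^\ast(x)\cup y + (-1)^q\,x\cup b_N^\ast(y)$, rewrites $D(x\ast y)$ as a combination of the four building blocks $d_{\rm CE}(x)\cup y$, $x\cup d_{\rm CE}(y)$, $b_N^\ast(x)\cup y$, and $x\cup b_N^\ast(y)$, each carrying an explicit sign.

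Third, I would independently expand $D(x)\ast y + (-1)^{|x|}\,x \ast D(y)$. The crucial observation is that $d_{\rm CE}$ shifts the $\Fs^\ast$-degree by one while $b_N^\ast$ shifts the $\Fc(N)$-degree by one, so the Koszul factor $(-1)^{qp'}$ of $\ast$ — which pairs the $\Fc(N)$-degree of the left argument with the $\Fs^\ast$-degree of the right — transforms as follows: $d_{\rm CE}(x)\ast y$ keeps $(-1)^{qp'}$ but $b_N^\ast(x)\ast y$ becomes $(-1)^{(q+1)p'}$, while $x\ast d_{\rm CE}(y)$ becomes $(-1)^{q(p'+1)}$ and $x\ast b_N^\ast(y)$ keeps $(-1)^{qp'}$. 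Converting every $\ast$ back into $\cup$ through these signs, and distributing the outer $(-1)^{|x|}=(-1)^{p+q}$, re-expresses the right-hand side as a combination of the same four blocks.

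The proof then finishes by a termwise comparison of the two sides: matching the coefficients of $d_{\rm CE}(x)\cup y$, $x\cup d_{\rm CE}(y)$, $b_N^\ast(x)\cup y$, and $x\cup b_N^\ast(y)$ reduces the identity $D(x\ast y) = D(x)\ast y + (-1)^{|x|}\,x\ast D(y)$ to four elementary parity congruences in the exponents (using $(-1)^{2q}=1$), all of which check out. The main — indeed essentially the only — obstacle is precisely this sign bookkeeping: one must keep rigorously straight which factor contributes its $\Fc(N)$-degree and which its $\Fs^\ast$-degree to each Koszul sign, and that these shift by exactly one under $b_N^\ast$ and $d_{\rm CE}$ respectively. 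No structural input beyond Propositions \ref{prop-bN} and \ref{prop-dCE} is required.
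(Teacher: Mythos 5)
Your proposal is correct and follows essentially the same route as the paper: both sides are expanded via Propositions \ref{prop-bN} and \ref{prop-dCE} and the identity is reduced to matching the Koszul signs produced by the twist $(-1)^{qp'}$ in the definition of $\ast$. Your termwise comparison of the four coefficients is exactly the sign bookkeeping the paper carries out (and it even shows that the intermediate coefficient $(-1)^{qp'+p+p'}$ of $(a\ot \eta \ot \widetilde{f}) \cup d_{\rm CE}(\om\ot \zeta \ot \widetilde{g})$ displayed in the paper's proof should read $(-1)^{qp'+p}$, consistently with the final line).
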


\begin{proof}
We have
\begin{align*}
& (d_{\rm CE} + (-1)^{p+p'}b_N^\ast)\Big((a\ot \eta \ot \widetilde{f}) \ast (\om\ot \zeta \ot \widetilde{g})\Big) = \\
& (-1)^{qp'}d_{\rm CE}\Big((a\ot \eta \ot \widetilde{f}) \cup (\om\ot \zeta \ot \widetilde{g})\Big) + \\
& \hspace{2cm} (-1)^{qp'+p+p'}b_N^\ast\Big((a\ot \eta \ot \widetilde{f}) \cup (\om\ot \zeta \ot \widetilde{g})\Big).
\end{align*}
In view of Proposition \ref{prop-bN}, and Proposition \ref{prop-dCE}
\begin{align*}
& (d_{\rm CE} + (-1)^{p+p'}b_N^\ast)\Big((a\ot \eta \ot \widetilde{f}) \ast (\om\ot \zeta \ot \widetilde{g})\Big) = \\
& (-1)^{qp'}d_{\rm CE}\Big(a\ot \eta \ot \widetilde{f}\Big) \cup (\om\ot \zeta \ot \widetilde{g}) + \\
& (-1)^{qp' + p + p'}(a\ot \eta \ot \widetilde{f}) \cup d_{\rm CE}\Big(\om\ot \zeta \ot \widetilde{g}\Big) + \\
& \hspace{2cm} (-1)^{qp'+p}b_N^\ast\Big(a\ot \eta \ot \widetilde{f}\Big) \cup (\om\ot \zeta \ot \widetilde{g}) + \\
& \hspace{2cm} (-1)^{qp'+p+p'+q}(a\ot \eta \ot \widetilde{f}) \cup b_N^\ast\Big(\om\ot \zeta \ot \widetilde{g}\Big) = \\
& (d_{\rm CE} + (-1)^pb_N^\ast)\Big(a\ot \eta \ot \widetilde{f}\Big) \ast (\om\ot \zeta \ot \widetilde{g}) + \\
& (-1)^{p+q}(a\ot \eta \ot \widetilde{f}) \ast (d_{\rm CE} + (-1)^{p'}b_N^\ast)\Big(\om\ot \zeta \ot \widetilde{g}\Big).
\end{align*}
\end{proof}

\section{The transfer of classes}\label{classes}

\subsection{Multiplicativity of the characteristic homomorphism}

We show that the chacracteristic homomorphism of \cite[Thm. 4.10]{RangSutl}, identifying the Hopf-cyclic cohomology of $\Hc_n$ with the Lie algebra cohomology of $W_n$, with nontrivial coefficients, respects the multiplicative structures on its domain and the range.

\begin{theorem}\label{thm-char-homm}
For the Lie algebra $W_n = \Fs \bowtie \Fn$, the Hopf algebra $\Fc(N) \acl U(\Fs)$, and the induced $\Fc(N)\acl U(\Fs)$-module $\Om_n^{\leq1}$, 
\begin{equation*}
HP^\ast(\Fc(N) \acl U(\Fs), \Om_{n\d}^{\leq1}) \cong \bigoplus_{m = \ast\,\,{\rm mod}\,2} H^m(W_n,\Om_n^{\leq1}).
\end{equation*}
\end{theorem}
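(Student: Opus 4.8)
The plan is to obtain the isomorphism by chaining the reductions already recorded in this section, so that Theorem~\ref{thm-char-homm} becomes the instance of the general van Est isomorphism \cite[Thm.~4.10]{RangSutl} attached to the data $W_n=\Fs\bowtie\Fn$, $\Fc(N)\acl U(\Fs)\cong\Hc_n\cop$, and the induced module $\Om_n^{\leq1}$. First I would recall that, by \cite{MoscRang09} and the identification \eqref{PSI-1}--\eqref{PSI}, the Hopf-cyclic cohomology of $\Hc_n\cop$ with coefficients in $\Om_{n\d}^{\leq1}$ is computed by the diagonal subcomplex of the total of \eqref{UF}; the antisymmetrization \eqref{antsym1} reduces \eqref{UF} to \eqref{UF+} by \cite[Prop.~4.4]{RangSutl}, and the Poincar\'e duality \eqref{Poincare-duality} identifies the total complex of \eqref{UF+} with that of \eqref{UF+*}. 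Hence $HP^\ast(\Fc(N)\acl U(\Fs),\Om_{n\d}^{\leq1})$ is computed by the periodicization of the total complex of \eqref{UF+*}, whose vertical differential is the Chevalley--Eilenberg coboundary $d_{\rm CE}$ of $\Fs$ and whose horizontal differential is $b_N^\ast$.

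The decisive step is to pass to the horizontal cohomology of \eqref{UF+*} and recognize it as the Lie algebra cohomology of the nilpotent factor $\Fn$. Under the nondegenerate pairing $\Fc(N)\cong R(\Fn)$ recalled in the Remark after \eqref{coact-g-F}, the horizontal complex $\big(\Om_{n\d}^{\leq1}\ot\wg^p\Fs^\ast\ot\Fc(N)^{\ot\,\bullet},\,b_N^\ast\big)$ is dual to the Chevalley--Eilenberg complex of $\Fn$ with values in $\Om_n^{\leq1}\ot\wg^p\Fs^\ast$, so that its cohomology is $H^\bullet(\Fn,\Om_n^{\leq1}\ot\wg^p\Fs^\ast)$; this is exactly the van Est comparison on which \cite[Thm.~4.10]{RangSutl} rests. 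This quasi-isomorphism in the horizontal direction identifies the total complex of \eqref{UF+*} with that of the bicomplex \eqref{g-1-g-2-bicomplex}, the surviving vertical differential being the Chevalley--Eilenberg coboundary of $\Fs$ acting on these $\Fn$-cohomology groups.

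It then remains to read off the total cohomology of \eqref{g-1-g-2-bicomplex} as $H^\ast(W_n,\Om_n^{\leq1})$, which is precisely the content of \eqref{natural-map}: by \cite[Lemma~2.7]{MoscRang11} the map $\natural$ is an isomorphism of complexes realizing the matched-pair decomposition $W_n=\Fs\bowtie\Fn$ at the cochain level. Composing these quasi-isomorphisms gives $HC^\ast(\Fc(N)\acl U(\Fs),\Om_{n\d}^{\leq1})\cong H^\ast(W_n,\Om_n^{\leq1})$, and since $H^\ast(W_n,\Om_n^{\leq1})$ is concentrated in finitely many degrees (its basis being the classes in \eqref{Vey-basis-cond}) the periodic theory collapses the grading to its residue modulo $2$, yielding $\bigoplus_{m=\ast\,\,{\rm mod}\,2}H^m(W_n,\Om_n^{\leq1})$.

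I expect the main obstacle to be the coefficient bookkeeping in the second step. With trivial coefficients the duality between the coalgebra Hochschild complex of $\Fc(N)$ and the Chevalley--Eilenberg complex of $\Fn$ is classical, but here one must track the $\Om_{n\d}^{\leq1}$-comodule structure --- in particular the twist by $S(\om\ns{1})$ occurring in the last term of \eqref{b_N-ast} --- and check that, under the pairing, it dualizes exactly to the $\Fn$-action on $\Om_n^{\leq1}$ entering \eqref{g-1-g-2-bicomplex}. This compatibility is the genuine role of the induced $(\Fs,\Fc(N))$-module structure established in the Lemma preceding this subsection, and it is the place where the SAYD data must be verified rather than quoted.
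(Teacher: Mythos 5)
Your proposal is correct and follows essentially the same route as the paper: the chain of reductions \eqref{UF}$\to$\eqref{UF+}$\to$\eqref{UF+*}, followed by the van Est comparison of \eqref{UF+*} with \eqref{g-1-g-2-bicomplex} via the nondegenerate pairing between $\Fc(N)$ and $U(\Fn)$, which is exactly what the paper does by exhibiting the explicit cochain map $\Vc$ of \eqref{VE} and citing \cite[Lemma 4.1]{RangSutl} together with \cite[Thm.~4.10]{RangSutl}. The coefficient bookkeeping you flag is indeed where the induced $(\Fs,\Fc(N))$-module structure of $\Om_n^{\leq1}$ enters, and the paper likewise delegates this to the cited results rather than reverifying it.
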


\begin{proof}
In view of \cite[Thm. 4.10]{RangSutl}, we need to show that the van Est type map 
\begin{align}\label{VE}
\begin{split}
&\Vc: \Om_n^{\leq1}\ot \wdg^p\Fs^\ast\ot {\Fc(N)}^{\ot q}\lra \Om_n^{\leq1}\ot\wdg^p \Fs^\ast\ot \wdg^q \Fn^\ast \\
&\Vc(\om\ot\eta\ot f^1\odots f^q)( X_1,\ldots,X_p\mid \xi_1,\ldots, \xi_q) = \\
& \hspace{1cm} \eta(X_1,\ldots, X_p)\sum_{\s\in S_q}(-1)^\s \langle \xi_{\s(1)}\,,\, f^1\rangle \ldots \langle \xi_{\s(q)}\,,\, f^q\rangle \om
\end{split}
\end{align}
from the total complex of the bicomplex \eqref{UF+*} to that of \eqref{g-1-g-2-bicomplex} is a quasi-isomorphism. This, in turn, follows at once from \cite[Lemma 4.1]{RangSutl} given the non-degenerate pairing \cite[(3.50)]{RangSutl-III}, see also \cite{ConnMosc98}, between $\Fc(N)$ and $U(\Fn)$.
\end{proof}

\ni The following is our main result.

\begin{theorem}\label{VE-multp}
The quasi-isomorphism \eqref{VE} is multiplicative, \ie
\begin{equation*}
\Vc\Big((a\ot \eta \ot \widetilde{f}) \ast (\om\ot \zeta \ot \widetilde{g})\Big) = \Vc(a\ot \eta \ot \widetilde{f}) \cup \Vc(\om\ot \zeta \ot \widetilde{g}).
\end{equation*}
\end{theorem}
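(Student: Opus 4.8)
The plan is to verify the identity directly at the level of cochains, expanding both sides with the explicit formulas \eqref{cup-prod}, \eqref{cup-total} and \eqref{VE}, and then to collapse the twisted coaction that appears on the left-hand side to a counit by means of the two structural facts at our disposal: the elements of $\Fn$ are primitive in $U(\Fn)$ (with respect to the Hopf pairing \cite[(3.50)]{RangSutl-III}), and, working in the normalized Hopf-cyclic complex, the $\Fc(N)$-entries of a cochain lie in $\ker\ve$. Throughout I abbreviate the alternating pairing of \eqref{VE} by $\pi$, so that $\Vc(\om\ot\eta\ot\widetilde f)=\om\ot\eta\ot\pi(\widetilde f)$ with $\pi(\widetilde f)\in\wg^q\Fn^\ast$.

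First I would unwind the left-hand side. By the definition of $\ast$ and \eqref{cup-prod},
\[
(a\ot\eta\ot\widetilde f)\ast(\om\ot\zeta\ot\widetilde g)=(-1)^{qp'}\,a\ns0\om\ot\eta\ns0\wg\zeta\ot\widetilde f\ot S(a\ns1)\eta\ns{-1}\cdot\widetilde g,
\]
so applying \eqref{VE} leaves $a\ns0\om$ in the $\Om_n^{\leq1}$-slot, $\eta\ns0\wg\zeta$ in the $\Fs^\ast$-slot, and in the $\Fn^\ast$-slot the alternating pairing of $\widetilde f\ot\big(S(a\ns1)\eta\ns{-1}\cdot\widetilde g\big)$ against $\xi_1,\dots,\xi_{q+q'}$. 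Writing $h:=S(a\ns1)\eta\ns{-1}$ and recalling that $h$ acts on $\widetilde g$ through its iterated coproduct, the factor coming from the $j$-th slot of $\widetilde g$ has the form $\langle\xi,h\ps j g^j\rangle$ for the relevant primitive $\xi$.

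The heart of the computation is the evaluation of this pairing. Since each $\xi\in\Fn$ is primitive, the Hopf pairing splits as $\langle\xi,h\ps j g^j\rangle=\langle\xi,h\ps j\rangle\ve(g^j)+\ve(h\ps j)\langle\xi,g^j\rangle$. Expanding the product over $j=1,\dots,q'$ yields a sum over subsets $A\subseteq\{1,\dots,q'\}$, and every term with $A\neq\emptyset$ carries a factor $\ve(g^j)$ for some $j\in A$; these vanish on normalized cochains. Only the term $A=\emptyset$ survives, and it equals $\prod_j\ve(h\ps j)=\ve(h)$ times the untwisted alternating pairing of $\widetilde f\ot\widetilde g$. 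Hence the $\Fn^\ast$-component collapses to $\ve(h)\,\pi(\widetilde f\ot\widetilde g)=\ve(a\ns1)\ve(\eta\ns{-1})\,\pi(\widetilde f)\wg\pi(\widetilde g)$, where I use the standard shuffle identity $\pi(\widetilde f\ot\widetilde g)=\pi(\widetilde f)\wg\pi(\widetilde g)$. Reassembling the coaction by the counit axioms $\ve(a\ns1)a\ns0=a$ and $\ve(\eta\ns{-1})\eta\ns0=\eta$ turns $a\ns0\om$ into $a\om$ and $\eta\ns0\wg\zeta$ into $\eta\wg\zeta$, so the left-hand side becomes $(-1)^{qp'}\,a\om\ot\eta\wg\zeta\ot\pi(\widetilde f)\wg\pi(\widetilde g)$. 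Comparing with \eqref{cup-total} applied to $\Vc(a\ot\eta\ot\widetilde f)=a\ot\eta\ot\pi(\widetilde f)$ and $\Vc(\om\ot\zeta\ot\widetilde g)=\om\ot\zeta\ot\pi(\widetilde g)$ reproduces $\Vc(a\ot\eta\ot\widetilde f)\cup\Vc(\om\ot\zeta\ot\widetilde g)$ exactly, the sign $(-1)^{qp'}$ from $\ast$ on the left matching the one built into the range cup product.

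The step I expect to be the main obstacle is the middle one: controlling how the SAYD twist $S(a\ns1)\eta\ns{-1}$ is redistributed across the $q'$ tensor factors of $\widetilde g$ by its coproduct. This twist is precisely what makes $b_N^\ast$ a graded derivation in Proposition \ref{prop-bN}, so it cannot simply be discarded; the content of the argument is that its genuinely nontrivial contributions are exactly the cross terms ($A\neq\emptyset$) that are annihilated on normalized cochains, leaving only the counit. For this reason I would state at the outset that we compute with the normalized Hopf-cyclic complex (entries in $\ker\ve$)—which is legitimate since $\Vc$ factors through the normalization—because it is precisely the vanishing of $\ve(g^j)$ that disposes of the coaction cross terms.
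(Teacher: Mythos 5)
Your proposal is correct and takes essentially the same route as the paper's own proof: expand both sides with the explicit formulas, use primitivity of the elements of $\Fn$ together with the vanishing of the counit on the $\Fc(N)$-entries to collapse the twist $S(a\ns{1})\eta\ns{-1}$ to $\ve(a\ns{1})\ve(\eta\ns{-1})$, and conclude via the antisymmetrization/shuffle identity and \eqref{cup-total}. The only difference is one of exposition: you spell out the subset expansion of the cross terms and state explicitly the normalization hypothesis that the paper leaves implicit in its remark that non-identity elements of $\Fc(N)$ vanish under the counit.
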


\begin{proof}
For an $a \ot \eta \ot \widetilde{f} \in C^{p,q}(\Om_{n\d}^{\leq1},\Fs^\ast,\Fc(N))$, and an $\om\ot \zeta \ot \widetilde{g} \in C^{p',q'}(\Om_{n\d}^{\leq1},\Fs^\ast,\Fc(N))$, we observe that
\begin{align*}
& \Vc\Big((a\ot \eta \ot \widetilde{f}) \ast (\om\ot \zeta \ot \widetilde{g})\Big)( X_1,\ldots,X_{p+p'}\mid \xi_1,\ldots, \xi_{q+q'}) = \\
& (-1)^{qp'}\Vc\Big(a\ns{0}\om \ot \eta\ns{0}\wg \zeta \ot \widetilde{f} \ot S(a\ns{1})\eta\ns{1}\cdot \widetilde{g}\Big)( X_1,\ldots,X_{p+p'}\mid \xi_1,\ldots, \xi_{q+q'}) = \\
& (-1)^{qp'}  \Big\langle \eta\ns{0}\wg \zeta,  X_1,\ldots,X_{p+p'}\Big\rangle \sum_{\s \in S_{q+q'}} (-1)^\s \Big\langle \widetilde{f} \ot S(a\ns{1})\eta\ns{-1}\cdot \widetilde{g}, \xi_{\s(1)},\ldots, \xi_{\s(q+q')}\Big\rangle a\ns{0}\om = \\
& (-1)^{qp'}  \Big\langle \eta\wg \zeta,  X_1,\ldots,X_{p+p'}\Big\rangle \sum_{\s \in S_{q+q'}} (-1)^\s \Big\langle \widetilde{f} \ot \widetilde{g}, \xi_{\s(1)},\ldots, \xi_{\s(q+q')}\Big\rangle a\om,
\end{align*}
where on the last equality we used the fact that the Lie algebra elements are primitive, and that the (non-identity) elements of $\Fc(N)$ are zero under the counit (when evaluated on the identity). Employing the anti-symmetrization map $\a:C^n(\Fc(N),V) \to C^n(\Fn,V)$, see for instance \cite[Subsect. 4.1]{RangSutl}, and setting $\widetilde{\a(f)} := \a(f^1)\wdots \a(f^q)$ corresponding to $\widetilde{f}:=f^1\odots f^q$, we may rewrite the cup product as
\begin{equation*}
\Vc\Big((a\ot \eta \ot \widetilde{f}) \ast (\om\ot \zeta \ot \widetilde{g})\Big) = (-1)^{qp'} a\om \ot  \eta\wg \zeta \ot \widetilde{\a(f)} \wg \widetilde{\a(g)}.
\end{equation*}
The claim now follows from \eqref{cup-total}.
\end{proof}

\ni A few words on the above results are in order. We recall that the multiplicative generators of the cohomology on the range are already known, see \cite[Thm. 2.2.7]{Fuks-book}, and the Subsection \ref{Lie-cohom-coeff} above. In addition, it is shown in Theorem \ref{thm-char-homm} that the van Est type map \eqref{VE} is an isomorphism on the level of the cohomologies. Hence, by Theorem \ref{VE-multp} the (multiplicative) generators of the Gelfand-Fuks cohomology (which are the characteristic classes of foliations) can be pulled back to the Hopf-cyclic cohomology of $\Hc_n$. More explicitly, $\Vc^{-1}(\lambda_k) \in C^{2k-1}(\Fc(N) \acl U(\Fs), \Om_{n\d}^{\leq1})$ and $\Vc^{-1}(\mu) \in C^1(\Fc(N) \acl U(\Fs), \Om_{n\d}^{\leq1})$, subject to the relations \eqref{Vey-basis-cond}, form a basis for the Hopf-cyclic cohomology of $\Hc_n$, with coefficients in $\Om_{n\d}^{\leq1}$. In the next subsection we shall illustrate this pull-back procedure for $n=1$, and demonstrate the inverse images under \eqref{VE} of the classes \eqref{lambda} and \eqref{mu}.

\subsection{The Hopf-cyclic classes}

We illustrate the transfer of classes in the case of $n=1$. For the ease of the presentation we are going to work with the representatives in the completion of the Lie algebra $W_1$ with respect to the natural topology (strict inductive limit topology of \cite{BonnFlatGersPinc94,BonnSter05}), and of the Hopf algebra $\Hc_1$, and of the projected tensor product $\projot$ (to which we shall keep referring as $\ot$). For convenience, we refer the reader to \cite{RangSutl-V} for the Hopf-cyclic cohomology for the topological Hopf-algebras.

\ni Let us first note that we shall adopt the basis $\{e_i\mid i \geq -1\}$ of the Lie algebra $W_1$, \cite[Subsect. 1.1.2]{Fuks-book}, and the basis $\{f^i \mid i\geq 0\}$ of the $W_1$-module $\Om_1^1$, \cite[Sect. 5.3]{Antal-thesis}, where the $W_1$-action is given by 
\begin{equation*}
e_i \cdot f^j = (i+j+1) f^{i+j}.
\end{equation*}

\ni As it is noted, the cohomology $H^\ast(W_1,\Om_1^{\leq1})$ is generated by the classes \eqref{lambda} and \eqref{mu}. More explicitly, if 
\begin{eqnarray*}
\xi & = &c_{-1} e_{-1} + c_0 e_0 + c_1 e_1 + \ldots \\
& = & c_{-1} \frac{\part}{\part x} + c_0 x\frac{\part}{\part x} + c_1 x^2 \frac{\part}{\part x} + \ldots 
\end{eqnarray*}
one has
\begin{equation*}
\lambda(\xi) = c_0 + 2c_1x + 3c_2x^2 + \ldots
\end{equation*}
that is, setting $\{\t^i \mid i\geq -1\}$ such that $\langle \t^i,\, e_j \rangle = \d^i_j$,
\begin{equation}\label{first-cocycle}
\lambda = \one \ot \t^0 + \sum_{i\geq 1}\,(i+1)x^i \ot \t^i \in C^{0,1}(\Om_1^{\leq1}, \Fs^\ast,\Fn^\ast) \oplus C^{1,0}(\Om_1^{\leq1}, \Fs^\ast,\Fn^\ast),
\end{equation}
and similarly 
\begin{equation}\label{second-cocycle}
\mu = \sum_{i\geq 1}\,(i+1)if^{i-1} \ot \t^i \in C^{1,0}(\Om_1^{\leq1}, \Fs^\ast,\Fn^\ast)
\end{equation}
on the bicomplex \eqref{g-1-g-2-bicomplex}. We note also that
\begin{align}\label{horizontal}
\begin{split}
& \hD_{\rm CE}(\mu)(e_p,e_q) = \mu([e_p,e_q]) - e_p\cdot \mu(e_q) + e_q\cdot \mu(e_p) = \\
& (q-p) \mu(e_{p+q}) - e_p\cdot \mu(e_q) + e_q\cdot \mu(e_p) = \\
& (q-p) (p+q+1)(p+q)f^{p+q-1} - e_p\cdot (q+1)qf^{q-1} + e_q\cdot (p+1)pf^{p-1} = \\
& (q-p) (p+q+1)(p+q)f^{p+q-1} - (q+1)q (p+q) f^{p+q} + (p+1)p(p+q)f^{p+q} = 0,
\end{split}
\end{align}
as well as,
\begin{align}\label{vertical-2}
\begin{split}
& \vDD_{\rm CE}(\mu) = \mu \cdot e_{-1} \ot \t^{-1} + \mu \cdot e_0 \ot \t^0 = \\
& \sum_{i\geq 1}\,(i+1)if^{i-1}\cdot e_{-1} \ot \t^{-1} \ot \t^i + \sum_{i\geq 1}\,(i+1)if^{i-1} \ot \t^{-1} \ot \t^i\cdot e_{-1}  + \\
& \sum_{i\geq 1}\,(i+1)if^{i-1}\cdot e_0 \ot \t^0 \ot \t^i + \sum_{i\geq 1}\,(i+1)if^{i-1} \ot \t^0 \ot \t^i\cdot e_0 = \\
& - \sum_{i\geq 2}\,(i+1)i(i-1) f^{i-2} \ot \t^{-1} \ot \t^i + \sum_{i\geq 1}\,(i+2)(i+1)if^{i-1} \ot \t^{-1} \ot \t^{i+1}  + \\
& - \sum_{i\geq 1}\,(i+1)i^2 f^{i-1} \ot \t^0 \ot \t^i + \sum_{i\geq 1}\,(i+1)i^2f^{i-1} \ot \t^0 \ot \t^i = 0.
\end{split}
\end{align}
As for $\lambda \in C^{0,1}(\Om_1^{\leq1}, \Fs^\ast,\Fn^\ast) \oplus C^{1,0}(\Om_1^{\leq1}, \Fs^\ast,\Fn^\ast)$, we observe that
\begin{equation}\label{vertical-1}
\vDD_{\rm CE}(\one \ot \t^0) = \one\cdot e_{-1} \ot \t^{-1}\wg \t^0 + \one\cdot e_0 \ot \t^0\wg \t^0 + \one \ot d_{\rm DR}(\t^0) = 0.
\end{equation}
On the other hand,
\begin{align*}
\hD_{\rm CE}(\one \ot \t^0)(e_p) = e_p \cdot (\one \ot \t^0) = - \one \ot \t^0 \cdot e_p = \begin{cases}
2 (\one \ot \t^{-1}), & \text{\rm if}\,\, p=1, \\
0, & \text{\rm if} \,\, p> 1,
\end{cases}
\end{align*}
that is,
\begin{equation*}
\hD_{\rm CE}(\one \ot \t^0) = 2(\one \ot \t^{-1} \ot \t^1),
\end{equation*}
and
\begin{align*}
& \vDD_{\rm CE}(\sum_{i\geq 1}\,(i+1)x^i \ot \t^i) = \\
& \hspace{2cm}\sum_{i\geq 1}\,(i+1)x^i\cdot e_{-1} \ot \t^{-1} \ot \t^i  + \sum_{i\geq 1}\,(i+1)x^i \cdot e_0 \ot \t^0\ot \t^i + \\
& \sum_{i\geq 1}\,(i+1)x^i \ot \t^{-1}\ot \t^i\cdot e_{-1} + \sum_{i\geq 1}\,(i+1)x^i  \ot \t^0\ot \t^i\cdot e_0 = \\
& - \sum_{i\geq 1}\,(i+1)i x^{i-1} \ot \t^{-1} \ot \t^i  - \sum_{i\geq 1}\,(i+1)i x^i \ot \t^0\ot \t^i + \\
& \sum_{i\geq 1}\,(i+2)(i+1)x^i \ot \t^{-1}\ot \t^{i+1} + \sum_{i\geq 1}\,(i+1)ix^i  \ot \t^0\ot \t^i = \\
& - 2(\one \ot \t^{-1} \ot \t^1) - \sum_{i\geq 2}\,(i+1)i x^{i-1} \ot \t^{-1} \ot \t^i  - \sum_{i\geq 1}\,(i+1)i x^i \ot \t^0\ot \t^i + \\
& \sum_{i\geq 1}\,(i+2)(i+1)x^i \ot \t^{-1}\ot \t^{i+1} + \sum_{i\geq 1}\,(i+1)ix^i  \ot \t^0\ot \t^i = - 2(\one \ot \t^{-1} \ot \t^1).
\end{align*}
Finally,
\begin{align*}
& \hD_{\rm CE}(\sum_{i\geq 1}\,(i+1)x^i \ot \t^i)(e_p,e_q) = \\
& \sum_{i\geq 1}\,(i+1)x^i \t^i([e_p,e_q]) - e_p \cdot \sum_{i\geq 1}\,(i+1)x^i \t^i(e_q) + e_q \cdot \sum_{i\geq 1}\,(i+1)x^i \t^i(e_p) = \\
& (q-p)(p+q+1)x^{p+q} - (q+1)(p+q+1)x^{p+q} + (p+1)(p+q+1)x^{p+q} = 0.
\end{align*}

\ni Referring the reader to \cite{McCleary-book} for details on spectral sequences, we now investigate the generators of the cohomology $H^\ast(W_1,\Om_1^{\leq1})$ in the 1st page of the spectral sequence associated to the natural filtration of the bicomplex \eqref{g-1-g-2-bicomplex}.

\begin{proposition}
On the $E_1$-term of the spectral sequence corresponding to the natural filtration of the bicomplex \eqref{g-1-g-2-bicomplex}, we have
\begin{equation*}
[\one \ot \t^0]_1 \in E_1^{0,1}, \hspace{3cm} [\sum_{i\geq 1}\,(i+1)if^{i-1} \ot \t^i]_1 \in E_1^{1,0}.
\end{equation*}
\end{proposition}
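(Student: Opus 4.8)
The plan is to read the statement as an assertion about the first page of the spectral sequence of the \emph{column} filtration of \eqref{g-1-g-2-bicomplex}, i.e. the (decreasing) filtration by the horizontal $\Fn^\ast$-degree $p$; with this filtration the $0$-th differential is precisely the vertical Chevalley--Eilenberg coboundary $\vDD_{\rm CE}$ of $\Fs$, acting columnwise on $\Om_1^{\leq1}\ot\wg^\bullet\Fs^\ast\ot\wg^p\Fn^\ast$. First I would pin down the indexing convention: in \eqref{g-1-g-2-bicomplex} the first index $p$ counts the horizontal ($\Fn^\ast$) degree and the second index $q$ the vertical ($\Fs^\ast$) degree, so that $E_0^{p,q}=C^{p,q}(\Om_1^{\leq1},\Fs^\ast,\Fn^\ast)$ and $E_1^{p,q}=H^q\big(\Om_1^{\leq1}\ot\wg^\bullet\Fs^\ast\ot\wg^p\Fn^\ast,\,\vDD_{\rm CE}\big)$. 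With this in hand, any cochain sitting in bidegree $(p,q)$ defines a class $[\,\cdot\,]_1\in E_1^{p,q}$ as soon as it is killed by $\vDD_{\rm CE}$.

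The actual verification is then immediate from the computations already recorded. I would simply locate the two cochains on the $E_0$-page: the term $\one\ot\t^0$ of \eqref{first-cocycle} lies in $\Om_1^0\ot\Fs^\ast$, hence in bidegree $(0,1)$, while $\mu=\sum_{i\geq1}(i+1)if^{i-1}\ot\t^i$ of \eqref{second-cocycle} lies in $\Om_1^1\ot\Fn^\ast$, hence in bidegree $(1,0)$. The identity \eqref{vertical-1} gives $\vDD_{\rm CE}(\one\ot\t^0)=0$ and the identity \eqref{vertical-2} gives $\vDD_{\rm CE}(\mu)=0$; thus both are $d_0$-cocycles in the stated bidegrees, and therefore represent the asserted classes $[\one\ot\t^0]_1\in E_1^{0,1}$ and $[\mu]_1\in E_1^{1,0}$.

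Since the genuinely computational input, the two vertical-closedness identities, is already done in \eqref{vertical-1}--\eqref{vertical-2}, the remaining work is essentially organisational, and the main point to get right is conceptual rather than calculational. The one place requiring care is the choice of filtration: it must be the $\Fn^\ast$-filtration, so that $d_0=\vDD_{\rm CE}$, for with the opposite (row) filtration $\one\ot\t^0$ would not even be a $d_0$-cocycle, since $\hD_{\rm CE}(\one\ot\t^0)=2(\one\ot\t^{-1}\ot\t^1)\neq0$. Relatedly, one should observe that it is the lowest-filtration component $\one\ot\t^0$ of the total cocycle $\lambda$ of \eqref{first-cocycle} --- and not all of $\lambda$ --- that represents the $E_1$-class, the remaining summand $\sum_{i\geq1}(i+1)x^i\ot\t^i$ having strictly larger $\Fn^\ast$-degree. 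If one also wants these $E_1$-classes to be nonzero, this is automatic for $[\mu]_1$, as $E_1^{1,0}$ is a $\vDD_{\rm CE}$-kernel with no coboundaries to divide out; for $[\one\ot\t^0]_1$ a one-line degree count, using $e_{-1}=\p_x$ and $e_0=x\p_x$ on $\Om_1^0$, shows that $\one\ot\t^0$ is not $\vDD_{\rm CE}$ of any element of $\Om_1^0$, and this is the only step that goes beyond quoting \eqref{vertical-1}--\eqref{vertical-2}.
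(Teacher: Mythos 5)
Your reading of the statement, and your verification, are correct for what the proposition literally asserts: with the filtration of \eqref{g-1-g-2-bicomplex} by $\Fn^\ast$-degree one has $d_0=\vDD_{\rm CE}$, and the identities \eqref{vertical-1} and \eqref{vertical-2} show that $\one\ot\t^0$ and $\mu$ are $d_0$-cocycles in bidegrees $(0,1)$ and $(1,0)$ respectively, hence define classes on $E_1$. This is exactly the first half of the paper's argument. The paper, however, does not stop there: it also computes $d_1$ of both classes and finds it to be zero --- for $[\one\ot\t^0]_1$ via the relation $\hD_{\rm CE}(\one\ot\t^0)=-\vDD_{\rm CE}\bigl(\sum_{i\geq1}(i+1)x^i\ot\t^i\bigr)$, so that the horizontal image is a vertical coboundary and vanishes in $E_1^{1,1}$, and for $[\mu]_1$ via \eqref{horizontal}. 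This extra content is not cosmetic: the $d_1$-closedness is precisely what the following proposition exploits, transporting it through the van Est isomorphism of $E_1$-terms to conclude that the Hopf-cyclic representatives are $b_N^\ast$-closed. You in fact flagged the relevant structure yourself when you observed that $\one\ot\t^0$ is only the lowest-filtration component of the total cocycle $\lambda$ of \eqref{first-cocycle}; promoting that observation to the displayed identity between $\hD_{\rm CE}(\one\ot\t^0)$ and $-\vDD_{\rm CE}$ of the higher component is the one additional line needed to recover the paper's full proof. Your closing remarks on the nonvanishing of the two $E_1$-classes go beyond what the paper records and are correct.
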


\begin{proof}
The 0-page $(E_0,d_0)$ of the spectral sequence consists of the vertical cohomology classes. As a result, we conclude from \eqref{vertical-1} and \eqref{vertical-2} that 
\begin{equation*}
[\one \ot \t^0]_0 \in E_0^{0,1}, \hspace{3cm} [\sum_{i\geq 1}\,(i+1)if^{i-1} \ot \t^i]_0 \in E_0^{1,0}.
\end{equation*}
On the $E_1$-level we have $d_1:E_1^{p,q} \to E_1^{p+1,q}$, that is, horizontal coboundary map acting on the vertical cohomology classes. We then note that
\begin{equation*}
d_1[\one \ot \t^0]_1 = [\hD_{\rm CE}(\one \ot \t^0)]_1 = [-\vDD_{\rm CE}(\sum_{i\geq 1}\,(i+1)x^i \ot \t^i)]_1 = [0]_1,
\end{equation*}
hence
\begin{equation*}
[\one \ot \t^0]_1 \in E_1^{0,1}.
\end{equation*}
On the other hand, \eqref{horizontal} implies that 
\begin{equation*}
[\sum_{i\geq 1}\,(i+1)if^{i-1} \ot \t^i]_1 \in E_1^{1,0}.
\end{equation*}
\end{proof}

\ni We now pull these two classes back to the Hopf-cyclic bicomplex \eqref{UF}. Once again we recall from \cite[Subsect. 4.2]{Antal-thesis} the affine coordinates $\{{\bf x}_i\mid i\geq 1\}$ of $N$, which are given by 
\begin{equation}\label{affine-basis}
{\bf x}_i(e_J) = \begin{cases}
1, & \text{\rm if}\,\, J = (i), \\
0, & \text{\rm otherwise},
\end{cases}
\end{equation}
where $e_J:= e_{j_1}\ldots e_{j_n}$ for $J=(j_1,\ldots, j_n)$.

\begin{proposition}
On the $E_1$-term of the spectral sequence corresponding to the natural filtration of the bicomplex \eqref{UF+*}, we have
\begin{equation*}
[\one \ot \t^0]_1 \in E_1^{0,1}, \hspace{1cm} [\sum_{i\geq 1}\,(i+1)if^{i-1} \ot {\bf x}_i]_1 \in E_1^{1,0}.
\end{equation*}
\end{proposition}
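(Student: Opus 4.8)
The plan is to avoid recomputing anything on the Hopf-cyclic side and instead transfer the preceding proposition along the van Est map $\Vc$ of \eqref{VE}. The first point to record is that $\Vc$ sends $\Om_n^{\leq1}\ot\wg^p\Fs^\ast\ot\Fc(N)^{\ot q}$ into $\Om_n^{\leq1}\ot\wg^p\Fs^\ast\ot\wg^q\Fn^\ast$, so it preserves both gradings; combined with the fact (Theorem \ref{thm-char-homm}, via \cite[Lemma 4.1]{RangSutl}) that it is a total quasi-isomorphism, bidegree bookkeeping forces $\Vc$ to commute with the vertical coboundary \eqref{d_CE} and the horizontal coboundary \eqref{b_N-ast} \emph{separately}. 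Hence $\Vc$ is a bigrading-preserving morphism of bicomplexes from \eqref{UF+*} to \eqref{g-1-g-2-bicomplex}, and in particular a morphism of the associated column-filtration spectral sequences.

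Next I would evaluate $\Vc$ on the two candidate representatives by means of the pairing \eqref{affine-basis}. The element $\one\ot\t^0$ carries no $\Fc(N)$-factor, so $\Vc$ fixes it and returns the representative $\one\ot\t^0$ of \eqref{first-cocycle}. For the second element, inserting a single generator ${\bf x}_i$ into \eqref{VE} and using ${\bf x}_i(e_j)=\d_{ij}$ yields
\begin{equation*}
\Vc\Big(\sum_{i\ge1}(i+1)i\,f^{i-1}\ot{\bf x}_i\Big)=\sum_{i\ge1}(i+1)i\,f^{i-1}\ot\t^i=\mu,
\end{equation*}
the cocycle \eqref{second-cocycle}. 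These two cochains already lie in bidegrees $(0,1)$ and $(1,0)$ by inspection, and $\Vc$ matches them to exactly the two generators located by the preceding proposition.

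To promote this to membership in $E_1$ I must check that the two cochains are $d_{\rm CE}$-cocycles. For $\one\ot\t^0$ this is immediate: with no $\Fc(N)$-factor, \eqref{d_CE} reduces to the $\Fs$-Chevalley--Eilenberg coboundary appearing in \eqref{vertical-1}, so $d_{\rm CE}(\one\ot\t^0)=0$. For the second cochain the key observation is that the $\Fs$-action on $\Fc(N)$ preserves the linear span $\langle{\bf x}_k\rangle$ of the affine coordinates (one computes $e_0\rt{\bf x}_i\in\Cb{\bf x}_i$ and $e_{-1}\rt{\bf x}_i\in\Cb{\bf x}_{i+1}$), so \eqref{d_CE} keeps $\sum_{i\ge1}(i+1)i\,f^{i-1}\ot{\bf x}_i$ inside $\Om_{n\d}^{\leq1}\ot\wg^\ast\Fs^\ast\ot\langle{\bf x}_k\rangle$. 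On this subspace $\Vc$ restricts to the identification ${\bf x}_k\leftrightarrow\t^k$ and is therefore injective; since $\Vc\big(d_{\rm CE}(\sum_{i\ge1}(i+1)i\,f^{i-1}\ot{\bf x}_i)\big)=\vDD_{\rm CE}(\mu)=0$ by \eqref{vertical-2}, injectivity forces the vertical coboundary to vanish. This places the two classes in $E_1^{0,1}$ and $E_1^{1,0}$.

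The step I expect to be the real obstacle is the confirmation that $d_1=b_N^\ast$ annihilates these classes on the $E_1$-page. Here the transfer is not automatic: $\Vc$ is a quasi-isomorphism with respect to the horizontal (row) differential, not the vertical one used to form this spectral sequence, so it need not be an isomorphism on the column-filtration $E_1$; moreover $b_N^\ast$ applied to the second representative involves the nonlinear comultiplication $\D({\bf x}_i)$ together with the $\Fc(N)$-coaction on $\Om_n^1$, both of which leave the linear subspace where the injectivity shortcut applies. I would therefore handle $d_1$ either by a direct telescoping computation in \eqref{b_N-ast}, mirroring the cancellation \eqref{horizontal} on the Lie side, or by a permanence argument: since $\mu$ and $\one\ot\t^0$ are permanent cocycles on the Lie side and $\Vc$ is an isomorphism on the abutment, their Hopf-cyclic counterparts are forced to survive, whence $d_1=0$.
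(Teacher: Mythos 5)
Your overall strategy---transferring the computations \eqref{vertical-1}, \eqref{vertical-2}, \eqref{horizontal} along the van Est map \eqref{VE}---is the same as the paper's. The difference is that the paper's proof runs entirely through one external input that you explicitly decline to use: by \cite[Thm.\ 4.10]{RangSutl}, $\Vc$ is an isomorphism already on the $E_1$-terms of the \emph{column} filtration (the one whose $d_0$ is $d_{\rm CE}$ and whose $d_1$ is induced by $b^\ast_N$), and hence, by the Mapping Lemma, on all $E_r$ with $r\geq 1$. Granting that, both the $d_{\rm CE}$-closedness of the two representatives and the vanishing of $d_1$ on their classes are read off from the Lie-algebra side, and the on-the-nose vanishing of $b^\ast_N$ follows from the positional remark that a vertical coboundary sitting in the $0$th row of \eqref{UF+*} must be zero. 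Your observation that a row-wise quasi-isomorphism does not automatically induce an isomorphism on the column-filtration $E_1$ correctly locates where the real content lies---but the paper's answer is precisely the cited theorem, and your proposed substitutes do not close the argument.

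Concretely, two steps fail. First, the injectivity argument for $d_{\rm CE}\big(\sum_{i\geq1}(i+1)i\,f^{i-1}\ot{\bf x}_i\big)=0$ requires the $\Fs$-action on $\Fc(N)$ to preserve the linear span $\langle{\bf x}_k\rangle$. It does not: while $e_0\rt{\bf x}_i$ is a multiple of ${\bf x}_i$, the right action of $e_{-1}$ on a quadratic PBW monomial such as $e_ie_1$ in $U(\Fn)$ produces a \emph{linear} term through the matched-pair cross term $e_i\lt(e_1\rt e_{-1})$, since $e_1\rt e_{-1}$ is a nonzero multiple of $e_0\in\Fs$; dually, $e_{-1}\rt{\bf x}_i$ acquires components on decomposable monomials, and $\Vc$ annihilates exactly those (a primitive $\xi\in\Fn$ pairs to zero with products of the ${\bf x}_k$). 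So $\Vc$ is not injective on the subspace actually reached by $d_{\rm CE}$, and your argument only yields vanishing modulo decomposables. Second, the ``permanence argument'' for $d_1$ is not valid: an isomorphism on the abutment does not force a given $d_0$-cocycle to survive, nor does it control individual differentials on chosen representatives---that is a page-by-page statement, which is exactly what the paper imports from \cite[Thm.\ 4.10]{RangSutl} together with \cite[Mapping Lemma 5.2.4]{Weibel-book}. Absent that input, the only remaining route is the direct telescoping computation with \eqref{b_N-ast} and \eqref{d_CE}, which you mention but do not carry out.
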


\begin{proof}
We have by \cite[Thm. 4.10]{RangSutl} that the characteristic homomorphism \eqref{VE} is an isomorphism on the $E_1$-level of the spectral sequences associated to the natural filtrations of the bicomplexes \eqref{g-1-g-2-bicomplex} and \eqref{UF+*}. It already follows from \eqref{vertical-2} that 
\begin{equation*}
[\sum_{i\geq 1}\,(i+1)if^{i-1} \ot {\bf x}_i]_0 \in E_0^{1,0},
\end{equation*}
and since \eqref{VE} is an isomorphism of $E_1$-terms,
\[
b_N^\ast\left(\sum_{i\geq 1}\,(i+1)if^{i-1} \ot {\bf x}_i\right) \in E_1^{2,0}
\]
is a vertical coboundary. But then, since it resides in the 0th row, we conclude that
\begin{equation*}
b_N^\ast(\sum_{i\geq 1}\,(i+1)if^{i-1} \ot {\bf x}_i) = 0.
\end{equation*}
Furthermore, in view of \cite[Mapping Lemma 5.2.4]{Weibel-book}, \eqref{VE} is also a map of $E_r$-terms as well, for any $r\geq 1$. Thus, from
\[
b_N^\ast(\one \ot \t^0) = - 2(\one \ot \t^{-1} \ot {\bf x}_1) = d_{\rm CE}\left(\sum_{i\geq 1}\,(i+1)x^i \ot {\bf x}_i\right),
\]
we conclude that
\[
b_N^\ast\left(\sum_{i\geq 1}\,(i+1)x^i \ot {\bf x}_i\right) = 0.
\]
\end{proof}

\begin{corollary}
The total cohomology of the bicomplex \eqref{UF+*} is generated by the classes
\begin{equation}\label{cocycle-lambda}
\lambda':=\one \ot \t^0 \oplus \sum_{i\geq 1}\,(i+1)x^i \ot {\bf x}_i \in C^{0,1}(\Om_{1\d}^{\leq1}, \Fs^\ast,\Fc(N)) \oplus C^{1,0}(\Om_{1\d}^{\leq1}, \Fs^\ast,\Fc(N))
\end{equation}
and
\begin{equation}\label{cocycle-mu}
\mu':=\sum_{i\geq 1}\,(i+1)f^i \ot {\bf x}_i \in C^{1,0}(\Om_{1\d}^{\leq1}, \Fs^\ast,\Fc(N)).
\end{equation}
\end{corollary}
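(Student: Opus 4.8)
The plan is to deduce the generation statement from the fact that the van Est map \eqref{VE} is a \emph{multiplicative} quasi-isomorphism, so that the (already known) multiplicative generators of its target are transported verbatim to the Hopf-cyclic side. Concretely, by Theorem \ref{thm-char-homm} the map \eqref{VE} is a quasi-isomorphism from the total complex of the bicomplex \eqref{UF+*} to that of \eqref{g-1-g-2-bicomplex}, and by Theorem \ref{VE-multp} it carries the $\ast$-product to the cup product \eqref{cup-total}; hence $\Vc$ induces an isomorphism of \emph{graded algebras} between the total cohomology of \eqref{UF+*} and $H^\ast(W_1,\Om_1^{\leq1})$. By \cite[Thm. 2.2.7]{Fuks-book}, recalled in Subsection \ref{Lie-cohom-coeff}, the latter algebra is generated by the classes $\lambda$ of \eqref{lambda} and $\mu$ of \eqref{mu}, subject to the relations \eqref{Vey-basis-cond}. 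Since $\Vc^{-1}$ is a ring isomorphism, it sends the generating set $\{[\lambda],[\mu]\}$ to a generating set; so it suffices to exhibit total cocycles on the \eqref{UF+*} side whose van Est images represent $[\lambda]$ and $[\mu]$, and to identify these with $[\lambda']$ and $[\mu']$.

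Next I would verify that $\lambda'$ and $\mu'$ are genuine total cocycles and compute their images under \eqref{VE}. For $\lambda'$ I would combine $d_{\rm CE}(\one\ot\t^0)=0$ (the Hopf-cyclic analogue of \eqref{vertical-1}) with the two identities $b_N^\ast(\one\ot\t^0)=d_{\rm CE}(\sum_{i\geq1}(i+1)x^i\ot{\bf x}_i)$ and $b_N^\ast(\sum_{i\geq1}(i+1)x^i\ot{\bf x}_i)=0$ established in the Proposition above; the total coboundary $d_{\rm CE}+(-1)^p b_N^\ast$ then annihilates $\lambda'$, the two pieces cancelling in the off-diagonal bidegree. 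The cocycle property of $\mu'$ was likewise recorded in that Proposition. Evaluating \eqref{VE} with the pairing $\langle e_j,{\bf x}_i\rangle=\delta_{ij}$ read off from \eqref{affine-basis}, the $q=0$ slot yields $\Vc(\one\ot\t^0)=\one\ot\t^0$ and the $p=0,\,q=1$ slot yields $\Vc(\sum_{i\geq1}(i+1)x^i\ot{\bf x}_i)=\sum_{i\geq1}(i+1)x^i\ot\t^i$, so that $\Vc(\lambda')=\lambda$ exactly as in \eqref{first-cocycle}; the same pairing shows $\Vc(\mu')$ is a cocycle representing $[\mu]$ of \eqref{second-cocycle}.

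The delicate point is the second matching, which I expect to be the main obstacle. One must confirm that the van Est image of $\mu'$ is cohomologous to the Fuks generator $\mu$, together with the dual fact that $\mu'$ is a total cocycle (i.e. $d_{\rm CE}\mu'=0$, the Hopf-cyclic counterpart of the vanishing \eqref{vertical-2}). Since the $W_1$-action preserves the splitting $\Om_1^{\leq1}=\Om_1^0\oplus\Om_1^1$, the image $[\Vc(\mu')]$ lies in the one-dimensional space $H^1(W_1,\Om_1^1)$ spanned by $[\mu]$; the matching therefore reduces to the non-vanishing $[\Vc(\mu')]\neq 0$, which is all that is needed for generation (proportionality to $[\mu]$ by a nonzero scalar).

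As a safeguard I would run the whole comparison at the level of the $E_1$-pages of the two spectral sequences associated to the natural filtrations, where the generators were already located in the two preceding Propositions, and then invoke \cite[Mapping Lemma 5.2.4]{Weibel-book} to promote the $E_1$-isomorphism induced by \eqref{VE} to the required isomorphism of total cohomologies. The multiplicativity furnished by Theorem \ref{VE-multp} guarantees that algebra generators are sent to algebra generators, and that the relations \eqref{Vey-basis-cond} are transported; together with the identifications $\Vc(\lambda')=\lambda$ and $[\Vc(\mu')]=[\mu]$ above, this yields that the total cohomology of \eqref{UF+*} is generated by $\lambda'$ and $\mu'$.
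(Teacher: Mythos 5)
Your proposal is correct and follows essentially the same route as the paper, which offers no separate argument for this corollary: it is read off from the multiplicative van Est quasi-isomorphism of Theorems \ref{thm-char-homm} and \ref{VE-multp}, the known generators $\lambda,\mu$ of $H^\ast(W_1,\Om_1^{\leq1})$, and the cocycle identities and $E_1$-page locations established in the two preceding propositions — exactly the ingredients you assemble. (The only friction is inherited from the paper itself: the propositions work with $\sum_{i\geq1}(i+1)i\,f^{i-1}\ot{\bf x}_i$ while \eqref{cocycle-mu} writes $\mu'=\sum_{i\geq1}(i+1)f^i\ot{\bf x}_i$, an index discrepancy your one-dimensionality argument for $H^1(W_1,\Om_1^1)$ in any case renders harmless.)
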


\ni Applying the Poincar\'e duality \eqref{Poincare-duality}, we push the above classes to
\[
\one \ot e_{-1} \oplus \sum_{i\geq 1}\,(i+1)x^i \ot e_{-1}\wg e_0 \,\ot\, {\bf x}_i \in C^{0,1}(\Om_{1\d}^{\leq1}, \Fs,\Fc(N)) \oplus C^{1,2}(\Om_{1\d}^{\leq1}, \Fs,\Fc(N)),
\]
and
\[
\sum_{i\geq 1}\,(i+1)f^i \ot e_{-1}\wg e_0 \ot {\bf x}_i \in C^{1,2}(\Om_{1\d}^{\leq1}, \Fs,\Fc(N)).
\]
We then observe that 
\[
\p_{\rm CE}(\one \ot e_{-1}\wg e_0) = \one \ot [e_{-1},\, e_0] = \one \ot e_{-1}
\]
and
\[
b_N(\one \ot e_{-1}\wg e_0) = 0.
\]
Hence the former class is cohomologous to
\[
\sum_{i\geq 1}\,(i+1)x^i \ot e_{-1}\wg e_0 \,\ot\, {\bf x}_i \in \oplus C^{1,2}(\Om_{1\d}^{\leq1}, \Fs,\Fc(N)).
\]
On the next move, we apply the anti-symmetrization map \eqref{antsym1} to get
\[
\frac{1}{2}\,\sum_{i\geq 1}\,(i+1)x^i \ot (e_{-1}\ot e_0 - e_0\ot e_{-1}) \ot {\bf x}_i \in C^{1,2}(\Om_{1\d}^{\leq1}, U(\Fs),\Fc(N))
\]
and
\[
\frac{1}{2}\sum_{i\geq 1}\,(i+1)f^i \ot (e_{-1}\ot e_0 - e_0\ot e_{-1}) \ot {\bf x}_i \in C^{1,2}(\Om_{1\d}^{\leq1}, U(\Fs),\Fc(N)).
\]
We next carry the classes from the total complex to the diagonal subcomplex via the Alexander-Whitney map, see for instance \cite{KustRognTuse02}. This way we obtain
\begin{align*}
& \frac{1}{2}\sum_{i\geq 1}\,(i+1)x^i \ot (e_{-1}\ot e_0 \ot 1 - e_0\ot e_{-1} \ot 1) \ot 1 \ot 1 \ot {\bf x}_i \in \\
& \hspace{8cm} {\rm Diag}^3(U(\Fs),\Fc(N),\Om_{1\d}^{\leq1}),
\end{align*}
and
\begin{align*}
& \frac{1}{2}\sum_{i\geq 1}\,(i+1)f^i \ot (e_{-1}\ot e_0 \ot 1 - e_0\ot e_{-1} \ot 1) \ot 1 \ot 1 \ot {\bf x}_i \in \\
& \hspace{8cm} {\rm Diag}^3(U(\Fs),\Fc(N),\Om_{1\d}^{\leq1}).
\end{align*}
Finally, we apply \eqref{PSI-1} to get the Hopf-cyclic representatives of the classes \eqref{lambda} and \eqref{mu}. As a result, we conclude the following.

\begin{corollary}
The classes \eqref{lambda} and \eqref{mu} are represented, in the Hopf-cyclic cohomology of the Hopf algebra $\Hc_1$ with coefficients in $\Om^{\leq1}_{1\d}$, by the 3-cocycles given by
\begin{align*}
& \lambda_{Hopf} := \frac{1}{2}\sum_{i\geq 0}\,(i+1)x^i \ot e_{-1} \ot e_0 \ot {\bf x}_i - \\
&  \sum_{i\geq 0}\,(i+1)x^i \ot e_0 \ot {\bf x}_1e_0 \ot {\bf x}_i -   \frac{1}{2}\sum_{i\geq 0}\,(i+1)x^i \ot e_0 \ot e_{-1} \ot {\bf x}_i,
\end{align*}
and
\begin{align*}
& \mu_{Hopf} := \frac{1}{2}\sum_{i\geq 0}\,(i+1)f^i \ot e_{-1} \ot e_0 \ot {\bf x}_i - \\
&  \sum_{i\geq 0}\,(i+1)f^i \ot e_0 \ot {\bf x}_1e_0 \ot {\bf x}_i -  \frac{1}{2}\sum_{i\geq 0}\,(i+1)f^i \ot e_0 \ot e_{-1} \ot {\bf x}_i.
\end{align*}
\end{corollary}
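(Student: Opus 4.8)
The statement is the end product of transporting the two total cocycles $\lambda'$ and $\mu'$ of \eqref{cocycle-lambda} and \eqref{cocycle-mu}, which already represent the generators on the reduced bicomplex \eqref{UF+*}, along the chain of quasi-isomorphisms that reconnects \eqref{UF+*} with the standard Hopf-cyclic complex $C^\ast(\Hc_1\cop,\Om_{1\d}^{\leq1})$. Each arrow in this chain has an explicit formula, so the corollary is the bookkeeping output of a computation rather than a fresh structural assertion; indeed the preceding paragraphs have already assembled the diagonal $3$-cochains to which only the final map must be applied. The plan is therefore to run the representatives through (i) the Poincar\'e duality \eqref{Poincare-duality}, (ii) a cohomologous simplification, (iii) the antisymmetrization \eqref{antsym1}, (iv) the Alexander-Whitney descent to the diagonal subcomplex, and (v) the identification $\Psi_{\acl}$ of \eqref{PSI-1}, and then to read off the resulting cochains.

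Concretely, I would first apply \eqref{Poincare-duality} with the covolume form $\varpi=e_{-1}\wg e_0$ of the two-dimensional Lie algebra $\Fs$, which turns the $\Fs^\ast$-factors into contractions of $\varpi$: the $\one\ot\t^0$ summand of $\lambda'$ becomes $\one\ot e_{-1}$, while the $\Fc(N)$-valued summand becomes $\sum_{i\geq1}(i+1)x^i\ot e_{-1}\wg e_0\ot{\bf x}_i$, and likewise $\mu'$ becomes $\sum_{i\geq1}(i+1)f^i\ot e_{-1}\wg e_0\ot{\bf x}_i$. Next, since $\p_{\rm CE}(\one\ot e_{-1}\wg e_0)=\one\ot e_{-1}$ while $b_N(\one\ot e_{-1}\wg e_0)=0$, the element $\one\ot e_{-1}\wg e_0$ is a total coboundary whose only component is $\one\ot e_{-1}$; subtracting it renders the $\lambda$-representative cohomologous to a pure degree-three cochain of the same shape as the $\mu$-representative. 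Applying the antisymmetrization \eqref{antsym1}, which replaces $e_{-1}\wg e_0$ by $\frac{1}{2}(e_{-1}\ot e_0-e_0\ot e_{-1})$, and then the Alexander-Whitney map, which inserts the requisite units, produces the diagonal $3$-cochains displayed just before the corollary statement.

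The step that carries the real weight is the last one, the evaluation of $\Psi_{\acl}$, because this map is sensitive to the $\Fc(N)$-coaction \eqref{coact-g-F} on $\Fs$. That coaction is trivial on $e_0$ but has a nontrivial secondary leg $\Db(e_{-1})=e_{-1}\ot 1+e_0\ot{\bf x}_1$, so that when the iterated Sweedler components $u^1\ns{0},u^1\ns{1},u^1\ns{2}$ of \eqref{PSI-1} are expanded, the piece $e_0\ot{\bf x}_1$ is precisely what manufactures the mixed middle term $e_0\ot{\bf x}_1 e_0$ in $\lambda_{Hopf}$ and $\mu_{Hopf}$, the two outer terms coming from the split $\frac{1}{2}(e_{-1}\ot e_0-e_0\ot e_{-1})$. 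I expect the main care to be required exactly here, and to be threefold: tracking which Sweedler leg of $\Db(e_{-1})$ is routed into which of the three tensor slots, recognizing the cancellation of the spurious $e_0\ot e_0$ contributions produced by the doubled coaction, and correctly handling the degree-one piece $\one\ot e_{-1}$ so as to recover the full index range $i\geq0$ (with the convention ${\bf x}_0=1$) and the stated coefficients. Reconciling the signs accumulated through \eqref{Poincare-duality}, \eqref{antsym1}, and the Alexander-Whitney map with those produced by $\Psi_{\acl}$ is the only genuinely delicate part; once these are pinned down, the formulas for $\lambda_{Hopf}$ and $\mu_{Hopf}$ follow.
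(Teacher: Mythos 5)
Your proposal is correct and follows essentially the same route as the paper: the paper's proof is precisely the evaluation of $\Psi_{\acl}$ on the diagonal $3$-cochains assembled in the preceding paragraphs, with the middle term $e_0\ot{\bf x}_1e_0$ arising from the secondary Sweedler leg of the coaction on $e_{-1}$ and the $e_0\ot e_0$ contributions cancelling between the two shuffle summands, exactly as you anticipate. The only caveat is your reading of the index range: the paper simply drops the degree-one piece $\one\ot e_{-1}$ as a coboundary rather than folding it into an $i=0$ term, so the shift from $i\geq1$ to $i\geq0$ in the corollary is not produced by a convention ${\bf x}_0=1$.
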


\begin{proof}
The claim follows from
\begin{align*}
& \lambda_{Hopf} = \Psi_{\acl}\left( \frac{1}{2}\sum_{i\geq 1}\,(i+1)x^i \ot (e_{-1}\ot e_0 \ot 1 - e_0\ot e_{-1} \ot 1) \ot 1 \ot 1 \ot {\bf x}_i \right) = \\
&  \frac{1}{2}\sum_{i\geq 1}\,(i+1)x^i \ot (1 \acl e_{-1}\ns{0}) \ot (e_{-1}\ns{1} \acl e_0\ns{0}) \ot ({\bf x}_ie_{-1}\ns{2}e_0\ns{1} \acl 1) - \\
&  \frac{1}{2}\sum_{i\geq 1}\,(i+1)x^i \ot (1 \acl e_0\ns{0}) \ot (e_0\ns{1} \acl e_{-1}\ns{0}) \ot ({\bf x}_ie_0\ns{2}e_{-1}\ns{1} \acl 1),
\end{align*}
and the similar arguments for $\mu_{Hopf} \in C^3(\Hc_1,\Om^{\leq1}_{1\d})$.
\end{proof}

\subsection{Connection with the group cohomology}

We shall now construct more compact representatives of the cocycles \eqref{cocycle-lambda} and \eqref{cocycle-mu} in the group cohomology of the group $N$. Let us consider the bigraded space 
\begin{equation}\label{group-bicomplex}
C_{\rm pol}^{\ast,\ast}(N,\Fs,\Om_1^{\leq1}) := \bigoplus_{p,q\geq 0}C_{\rm pol}^{p,q}(N,\Fs,\Om_1^{\leq1}), \qquad C_{\rm pol}^{p,q}(N,\Fs,\Om_1^{\leq1}):=C_{\rm pol}^q(N, \Om_1^{\leq1}\ot \wdg^p \Fs^*)
\end{equation}
where $C_{\rm pol}^q(N, \Om^{\leq1}_{1\d}\ot \wdg^p\Fs^*)$ refers to the space of $q$-cochains of the group cohomology of $N$, with coefficients in the $N$-module $\Om^{\leq1}_{1\d}\ot \wdg^p\Fs^*$, see for instance \cite[Sect. 2]{HochSerr53-II}. Namely, the set of (homogeneous) polynomial cochains
\begin{equation*}
\phi:\underset{(q+1)-many}{\underbrace{N\times \cdots \times N}}\lra \Om_1^{\leq1}\ot \wdg^p\Fs^*,
\end{equation*}
satisfying
\[
\phi(\psi\psi_0, \ldots, \psi\psi_q) = \psi\cdot \phi(\psi_0,\ldots, \psi_q),
\]
together with the coboundary
 \begin{align}\label{b_N}
 \begin{split}
& b_N:C_{\rm pol}^q(N,\Om_1^{\leq1}\ot \wdg^p\Fs^*)\lra C_{\rm pol}^{q+1}(N,\Om_1^{\leq1}\ot \wdg^p\Fs^*),\\
 &b_N(\phi)(\psi_0,\cdots,\psi_{q+1})= \sum_{i=0}^{q+1}\,(-1)^i\,\phi(\psi_0, \ldots,\widehat{\psi}_i,\ldots,\psi_{q+1}).
\end{split}
 \end{align}
 The action of the group $N$ is given explicitly by
 \[
 f(x)dx \cdot \psi := f(\psi(x))\psi'(x)dx,
 \]
 see \cite[Sect. 1]{OvsiRoge98}. In addition, we introduce the coboundary
\begin{align}\label{b_s}
\begin{split}
&b_\Fs:C_{\rm pol}^q(N,\Om_1^{\leq1}\ot \wdg^p\Fs^*)\lra C_{\rm pol}^{q}(N,\Om_1^{\leq1}\ot \wdg^{p+1}\Fs^*),\\
&b_\Fs(\phi)(\psi_0,\cdots,\psi_q):=d^\Om_{\rm CE}(\phi(\psi_0,\cdots,\psi_q)) - \sum_{j=-1}^0\t^j \wg (e_j \rt \phi)(\psi_0,\cdots,\psi_q),
\end{split}
\end{align}
where $d^\Om_{\rm CE}:\Om_1^{\leq1}\ot\wg^p\,\Fs^\ast \to \Om_1^{\leq1}\ot\wg^{p+1}\,\Fs^\ast$ is the Lie algebra cohomology coboundary (with coefficients in $\Om_1^{\leq1}$), and for any $X\in \Fs$ and $\phi \in C^q(N,\Om_1^{\leq1}\ot \wdg^p\Fs^*)$,
\[
(X\rt \phi)(\psi_1,\ldots, \psi_q) := \sum_{j=0}^q \left.\frac{d}{dt}\right|_{t=0}\phi(\psi_0,\ldots, \psi_j\lt \exp(tX),\ldots, \psi_q).
\]
We thus have the following.

\begin{proposition}
The coboundaries \eqref{b_N} and $b_s$ commute, that is,
\[
b_N \circ b_\Fs = b_\Fs \circ b_N.
\]
\end{proposition}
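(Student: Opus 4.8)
The plan is to decompose $b_\Fs$ according to whether a summand acts on the coefficient value or on the group arguments, and then to exploit that $b_N$, being the homogeneous coboundary \eqref{b_N}, only permutes and deletes the entries $\psi_0,\dots,\psi_q$ and never touches the coefficient in $\Om_1^{\leq1}\ot \wdg^p\Fs^\ast$. I would write $b_\Fs=D+L$, where $D$ sends a cochain $\phi$ to $\psi\mapsto d^\Om_{\rm CE}(\phi(\psi_0,\dots,\psi_q))$, and $L(\phi)=-\sum_{j=-1}^0 \t^j\wg(e_j\rt\phi)$. Both $d^\Om_{\rm CE}$ and each wedging $\t^j\wg(-)$ are fixed linear maps on the coefficient space applied pointwise in the group variables; for any such pointwise operator $T$ one has $T\,b_N(\phi)=b_N(T\phi)$ simply by linearity of $T$ and of the alternating sum \eqref{b_N}. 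Consequently $D$ and the wedgings commute with $b_N$, and the proposition reduces to the single commutation
\[
b_N\bigl(e_j\rt\phi\bigr)=\bigl(e_j\rt\bigr)\,b_N(\phi),\qquad j\in\{-1,0\}.
\]

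To establish this, I set $\psi_k^t:=\psi_k\lt\exp(te_j)$, so that by definition
\[
(e_j\rt\phi)(\psi_0,\dots,\psi_q)=\sum_{k=0}^{q}\dt\,\phi(\psi_0,\dots,\psi_k^t,\dots,\psi_q).
\]
Applying $b_N$ and, on the other side, $e_j\rt(-)$, I would expand both as double sums over a deletion index $i$ and a differentiation index $k$:
\[
b_N(e_j\rt\phi)(\psi_0,\dots,\psi_{q+1})=\sum_{i=0}^{q+1}(-1)^i\sum_{k\neq i}\dt\,\phi(\psi_0,\dots,\psi_k^t,\dots,\widehat{\psi_i},\dots,\psi_{q+1}),
\]
\[
(e_j\rt)\,b_N(\phi)(\psi_0,\dots,\psi_{q+1})=\sum_{k=0}^{q+1}\dt\sum_{i=0}^{q+1}(-1)^i\,\phi(\psi_0,\dots,\psi_k^t,\dots,\widehat{\psi_i},\dots,\psi_{q+1}).
\]
The decisive observation is the diagonal $k=i$ in the second expression: there the perturbed argument $\psi_i^t$ is exactly the one deleted, so $\phi$ does not depend on $t$ and $\dt$ annihilates that summand. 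After discarding these vanishing diagonal terms the two iterated sums coincide term by term, up to the interchange of the two finite summations, which proves the displayed commutation (with matching signs, so the operators genuinely commute rather than anticommute).

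Combining, $b_N$ commutes with $D$, with every $\t^j\wg(e_j\rt-)$, hence with $L$, and therefore $b_N\circ b_\Fs=b_\Fs\circ b_N$. I expect the only real difficulty to be the bookkeeping of the two summation ranges together with the deletion $\widehat{\psi_i}$, and in making the vanishing of the $k=i$ terms rigorous; this cancellation is precisely where the infinitesimal (derivative-in-$t$) nature of the $\Fs$-action on cochains is used, and no deeper structural input about $N$ or $\Om_1^{\leq1}$ is needed, since $d^\Om_{\rm CE}$ and the wedgings commute with $b_N$ for purely formal reasons.
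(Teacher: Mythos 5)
Your proposal is correct and follows essentially the same route as the paper: both expand $b_N\circ b_\Fs$ and $b_\Fs\circ b_N$ on a homogeneous cochain and reduce the matter to the fact that the pointwise coefficient operators $d^\Om_{\rm CE}$ and $\t^j\wg(-)$ commute trivially with the alternating deletion sum \eqref{b_N}, together with the interchange $b_N(e_j\rt\phi)=e_j\rt b_N(\phi)$. The only difference is that the paper asserts this last interchange implicitly by writing the two expansions as equal, whereas you justify it explicitly via the vanishing of the diagonal $k=i$ terms under $\dt$ --- a detail worth recording, but not a different argument.
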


\begin{proof}
On one hand we have
\begin{align*}
& (b_N \circ b_\Fs) (\phi) (\psi_0,\ldots, \psi_{q+1}) = b_N(b_\Fs(\phi))  (\psi_0,\ldots, \psi_{q+1}) = \\
& \sum_{i=0}^{q+1}\,(-1)^i\,b_\Fs(\phi)(\psi_0, \ldots, \widehat{\psi}_i,\ldots, \psi_{q+1}) =\\
& \sum_{i=0}^{q+1}\,(-1)^i\,\left(d^\Om_{\rm CE}(\phi(\psi_0, \ldots, \widehat{\psi}_i,\ldots, \psi_{q+1})) - 
\t^j \wg (e_j \rt \phi)(\psi_0, \ldots, \widehat{\psi}_i,\ldots, \psi_{q+1})\right),
\end{align*}
and on the other hand,
\begin{align*}
& (b_\Fs \circ b_N) (\phi) (\psi_0,\ldots, \psi_{q+1}) = b_\Fs(b_N(\phi))  (\psi_0,\ldots, \psi_{q+1}) = \\
& d^\Om_{\rm CE} (b_N(\phi)(\psi_0,\ldots, \psi_{q+1})) - 
\t^j \wg (e_j \rt b_N(\phi))(\psi_0,\cdots,\psi_q) = \\
& d^\Om_{\rm CE} \left(\sum_{i=0}^{q+1}\,(-1)^i\,\phi(\psi_0, \ldots, \widehat{\psi}_i,\ldots, \psi_{q+1})\right) - 
\sum_{i=0}^{q+1}\,(-1)^i\,\t^j \wg (e_j \rt \phi)(\psi_0, \ldots, \widehat{\psi}_i,\ldots, \psi_{q+1}).
\end{align*}
\end{proof}

\ni As a result, we arrive at the bicomplex
\begin{align*}
\begin{xy} \xymatrix{  \vdots & 
 &\vdots &&\vdots\\
 \Om_1^{\leq1}\ot \wdg^2\Fs^\ast  \ar[u]^{b_\Fs} \ar[rr]^{b_N\,\,\,\,\,\,\,\,\,\,\;\;\;\;\;}& &  C_{\rm pol}^1(N,\Om_1^{\leq1}\ot \wdg^2\Fs^\ast)\ar[u]^{b_\Fs} \ar[rr]^{b_N\,\,\,\,\,\,\,\,\,\,}& & C_{\rm pol}^2(N,\Om_1^{\leq1}\ot \wdg^2\Fs^\ast)\ar[u]^{b_\Fs} \ar[r]^{\;\;\;\;\;\;\;\;\;\;\;\;\;\,\,\,\,b_N} & \hdots &  \\
 \Om_1^{\leq1}\ot \Fs^\ast  \ar[u]^{b_\Fs}\ar[rr]^{b_N ~~~~~}& & C_{\rm pol}^1(N,\Om_1^{\leq1}\ot \Fs^\ast) \ar[u]^{b_\Fs} \ar[rr]^{b_N}&& C_{\rm pol}^2(N,\Om_1^{\leq1}\ot  \Fs^\ast)  \ar[u]^{b_\Fs} \ar[r]^{\;\;\;\;\;\;\;\;\;\;b_N }& \hdots&  \\
   \Om_1^{\leq1}\ar[u]^{b_\Fs}\ar[rr]^{b_N~~~~~~~}& & C_{\rm pol}^1(N,\Om_1^{\leq1})\ar[u]^{b_\Fs}\ar[rr]^{b_N }& &C_{\rm pol}^2(N,\Om_1^{\leq1}) \ar[u]^{b_\Fs} \ar[r]^{\;\;\;\;\;b_N} & \hdots& }
\end{xy}
\end{align*}
The bicomplex \eqref{group-bicomplex} is evidently a sub-bicomplex of
\begin{equation}\label{cont-group-bicomplex}
C_{\rm cont}^{\ast,\ast}(N,\Fs,\Om_1^{\leq1}) := \bigoplus_{p,q\geq 0}C_{\rm cont}^{p,q}(N,\Fs,\Om_1^{\leq1}), \qquad C_{\rm cont}^{p,q}(N,\Fs,\Om_1^{\leq1}):=C_{\rm cont}^q(N, \Om_1^{\leq1}\ot \wdg^p \Fs^*)
\end{equation}
of continuous group cochains. Furthermore, $C_{\rm cont}^q(N,\Om^1_{1\d}\ot \wdg^p\Fs^*)$ may be considered as the set of continuous (inhomogeneous cochains)
\begin{equation*}
\wbar{\phi}:\underset{q-many}{\underbrace{N\times \cdots \times N}}\lra \Om_1^{\leq1}\ot \wdg^p\Fs^*,
\end{equation*}
via the identification $\wbar{\phi}(\psi_1,\ldots,\psi_q) = \phi(\psi_1\ldots\psi_q,\psi_2\ldots\psi_q,\ldots, \psi_q,e)$. This way, the horizontal coboundary transforms into 
 \begin{align*}
& \wbar{b}_N:C^q(N,\Om_1^{\leq1}\ot \wdg^p\Fs^*)\lra C^{q+1}(N,\Om_1^{\leq1}\ot \wdg^p\Fs^*),\\
 &\wbar{b}_N(\wbar{\phi})(\psi_1,\cdots,\psi_{q+1}) = \\
 & \wbar{\phi}(\psi_2,\ldots,\psi_{q+1}) + \sum_{i=1}^q\,(-1)^i\,\wbar{\phi}(\psi_1,\ldots,\psi_{i}\psi_{i+1},\ldots,\psi_{q+1})+ (-1)^{q+1} \wbar{\phi}(\psi_1,\cdots,\psi_{q}) \cdot \psi_{q+1}.
 \end{align*}

\begin{proposition}\label{prop-van-est-diff}
We have the van Est-type isomorphism
\[
H_{\rm cont}^\ast(\Diff(\Rb), \Om_1^{\leq1}) \cong H^\ast(W_1, \Om_1^{\leq1}).
\]
\end{proposition}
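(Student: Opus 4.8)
The plan is to exhibit both sides as the total cohomology of the group bicomplex \eqref{cont-group-bicomplex}, using the Kac decomposition $\Diff(\Rb)=G\bowtie N$ with $G\cong GL_1^{\rm aff}$ the affine group and $N$ the group of formal diffeomorphisms fixing the origin to first order, so that the respective Lie algebras are $\Fs\cong g\ell_1^{\rm aff}$ and $\Fn$. First I would argue that the total complex of \eqref{cont-group-bicomplex} computes $H^\ast_{\rm cont}(\Diff(\Rb),\Om_1^{\leq1})$: the continuous cohomology of the matched pair $G\bowtie N$ is computed by the double complex whose horizontal direction is the continuous bar complex of $N$ (with coefficients in the $N$-module $\Om_1^{\leq1}\ot\wg^p\Fs^\ast$, as in \eqref{b_N}) and whose vertical direction is the continuous bar complex of $G$. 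Because the maximal compact $SO(1)$ of $G$ is trivial, the classical van Est theorem replaces the $G$-bar complex by the Chevalley--Eilenberg complex of $\Fs$; this is exactly the coboundary $b_\Fs$ of \eqref{b_s}, and one recovers \eqref{cont-group-bicomplex}.

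Next I would build a morphism of bicomplexes $\Phi$ from \eqref{cont-group-bicomplex} to the Lie algebra bicomplex \eqref{g-1-g-2-bicomplex} by van Est differentiation along $N$: an inhomogeneous continuous cochain $\wbar{\phi}\colon N^{\times q}\to \Om_1^{\leq1}\ot\wg^p\Fs^\ast$ is sent to its alternating derivative at the identity, an element of $\Om_1^{\leq1}\ot\wg^p\Fs^\ast\ot\wg^q\Fn^\ast$. Two compatibilities must be checked. That $\Phi$ intertwines $\wbar{b}_N$ with the Chevalley--Eilenberg coboundary $\hD_{\rm CE}$ is the standard fact that the van Est map is a cochain map from group to Lie algebra cochains. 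That $\Phi$ intertwines $b_\Fs$ with $\vDD_{\rm CE}$ follows because both fix the $\wg^p\Fs^\ast$- and $\Om_1^{\leq1}$-factors, and because the $\Fs$-action $e_j\rt(-)$ on $N$-cochains appearing in \eqref{b_s} differentiates, via the matched pair action, to the $\Fs$-action on $\wg^q\Fn^\ast$ used in $\vDD_{\rm CE}$. Both verifications are mechanical once the van Est cochain map is fixed.

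The crux is to upgrade $\Phi$ to a quasi-isomorphism. I would filter both bicomplexes by the $\Fs$-degree $p$ and compare the induced spectral sequences; by \cite[Mapping Lemma 5.2.4]{Weibel-book} it suffices that $\Phi$ be an isomorphism on the $E_1$-page, namely that for each fixed $p$ the van Est map is an isomorphism
\begin{equation*}
H^\ast_{\rm cont}(N,\Om_1^{\leq1}\ot\wg^p\Fs^\ast)\xra{\cong} H^\ast(\Fn,\Om_1^{\leq1}\ot\wg^p\Fs^\ast).
\end{equation*}
This is the van Est isomorphism for $N$ by itself. Since $N$ is pro-unipotent---an inverse limit of simply connected unipotent Lie groups---it is contractible with trivial maximal compact, so van Est should apply; concretely I would first show that the polynomial sub-bicomplex \eqref{group-bicomplex} computes the same cohomology as the continuous one \eqref{cont-group-bicomplex} (polynomial cochains being dense in the strict inductive limit topology of \cite{BonnFlatGersPinc94,BonnSter05}), and then that differentiation at the identity is an isomorphism from polynomial group cochains onto Lie algebra cochains. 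I expect this row-wise isomorphism to be the main obstacle: both $N$ and the coefficient space $\Om_1^{\leq1}$ are infinite dimensional, so one cannot quote the textbook finite-dimensional van Est theorem but must establish it in the topological setting, tracking the completed tensor products $\projot$ throughout. Granting the $E_1$-isomorphism, the comparison theorem gives an isomorphism of total cohomologies, which combined with the first paragraph yields $H^\ast_{\rm cont}(\Diff(\Rb),\Om_1^{\leq1})\cong H^\ast(W_1,\Om_1^{\leq1})$.
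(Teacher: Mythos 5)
Your overall architecture coincides with the paper's: both identify the total cohomology of the group bicomplex \eqref{cont-group-bicomplex} with $H^\ast_{\rm cont}(\Diff(\Rb),\Om_1^{\leq1})$ via the decomposition $\Diff(\Rb)=S\cdot N$ together with the classical van Est isomorphism in the affine ($\Fs$-) direction (the paper invokes \cite[Prop. 1.5]{Dupo76} and the Cartan--Leray/Hochschild--Serre spectral sequence of \cite{HochSerr53-II}), and both then reduce the statement to a row-wise comparison $H^\ast_{\rm cont}(N,\Om_1^{\leq1}\ot\wg^p\Fs^\ast)\cong H^\ast(\Fn,\Om_1^{\leq1}\ot\wg^p\Fs^\ast)$ on the $E_1$-page of the filtration by the $\Fs$-degree. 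Up to that point your plan is sound and matches the paper.

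The gap sits exactly where you flag it. You do not prove the van Est isomorphism for the infinite-dimensional pro-unipotent group $N$, and the two substitutes you sketch do not close it as stated: density of polynomial cochains among continuous ones does not by itself make the inclusion of \eqref{group-bicomplex} into \eqref{cont-group-bicomplex} a quasi-isomorphism, and ``differentiation at the identity is an isomorphism from polynomial group cochains onto Lie algebra cochains'' is false at the cochain level --- $C^q_{\rm pol}(N,M)$ is vastly larger than $\wg^q\Fn^\ast\ot M$, so the map can at best be a quasi-isomorphism, which is precisely the assertion to be proved. The paper closes this gap by a pro-finite-dimensional approximation: $N=\varprojlim_k N_k$ with $N_k$ the finite-dimensional unipotent group of invertible $k$-jets at $0$; the inverse system satisfies the Mittag--Leffler condition \cite{Dimi04}, so $H^\ast_{\rm cont}(N,-)=\varprojlim_k H^\ast_{\rm cont}(N_k,-)$; the classical van Est theorem applies to each $N_k$ (trivial maximal compact $SO(1)$); and the limit is commuted back on the infinitesimal side, $\varprojlim_k H^\ast(\Fn_k,-)=H^\ast(\Fn,-)$. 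Some version of this reduction to finite dimensions (or an independent proof of van Est for pro-unipotent groups with these topological coefficients, with the completed tensor products tracked) is the one idea your argument still needs before the $E_1$-comparison and \cite[Mapping Lemma 5.2.4]{Weibel-book} can be invoked.
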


\begin{proof}
In view of the van Est isomorphism \cite[Prop. 1.5]{Dupo76} on the vertical level, we note from \cite[Lemma 1]{HochSerr53-II} that the $E^1$-term of the spectral sequence, associated to the natural filtration of the bicomplex \eqref{cont-group-bicomplex}, is identified with the $E^1$-term of the Cartan-Leray spectral sequence  which computes, by \cite[Prop. 5]{HochSerr53-II}, the group cohomology $H_{\rm cont}^\ast(\Diff(\Rb), \Om_1^{\leq1})$, regarding the decomposition $\Diff(\Rb)=S\cdot N$.


\ni The claim now follows from the identification of the bicomplex \eqref{cont-group-bicomplex} with the Lie algebra cohomology bicomplex \eqref{g-1-g-2-bicomplex}, which requires the commutativity of the inverse limit and the cohomology as follows. The (profinite) group $N$ is given as an inverse limit, see \cite[Eqn. (1.52)]{MoscRang11}:
\[
N = \underset{k\to \infty}{\underset{\lla}{\lim}}\,N_k.
\] 
We have projections $\pi_{ij}:N_i\to N_j$, from the group $N_i$ of invertible $i$-jets at $0\in \Rb$ to the group $N_j$ of invertible $j$-jets at $0\in \Rb$,  for any $i \geq j$, see for instance \cite[Sect. IV.13]{KolarMichorSlovak-book}. Therefore, the inverse system $(N_i, \pi_{ij})$ satisfies the Mittag-Leffler condition, \cite[Thm. 1]{Dimi04}. Hence
\begin{align*}
& H_{\rm cont}^\ast(N, \Om_1^{\leq1}\ot \Fs^\ast) = H_{\rm cont}^\ast\left(\underset{k\to \infty}{\underset{\lla}{\lim}}\,N_k, \Om_1^{\leq1}\ot \Fs^\ast\right) =  \underset{k\to \infty}{\underset{\lla}{\lim}}\,H_{\rm cont}^\ast\left(N_k, \Om_1^{\leq1}\ot \Fs^\ast\right).
\end{align*}
On the other hand, $\Fn_k:= \langle \left\{e_j\mid  j \geq k \right\}\rangle$ being the Lie algebra of the group $N_k$, \cite[Sect. IV.13]{KolarMichorSlovak-book}, on the infinitesimal level we have the inverse system $(\Fn_i,\pi_{ij})$ of Lie algebras with the projections $\pi_{ij}:\Fn_i \to \Fn_j$ for any $i \geq j$. As such, 
\begin{align*}
& \underset{k\to \infty}{\underset{\lla}{\lim}}\,H_{\rm cont}^\ast\left(N_k, \Om_1^{\leq1}\ot \Fs^\ast\right) \cong \underset{k\to \infty}{\underset{\lla}{\lim}}\,H_{\rm cont}^\ast\left(\Fn_k, \Om_1^{\leq1}\ot \Fs^\ast\right) = \\
& H_{\rm cont}^\ast\left(\underset{k\to \infty}{\underset{\lla}{\lim}}\,\Fn_k, \Om_1^{\leq1}\ot \Fs^\ast\right) = H^\ast(\Fn, \Om_1^{\leq1}\ot \Fs^\ast),
\end{align*}
where we note for the first (van Est) isomorphism that the maximal compact subgroup of $N_k$, for any $k\geq 1$, is $SO(1)$. We refer the reader to \cite[Thm. 2.4\, \&\,Thm. 2.5]{Dart94} for further identifications of these cohomologies. 
\end{proof}

\ni On the next step, let us consider the (coinvariant) bigraded space
\begin{equation}\label{coinv-bicomplex}
C^{\ast,\ast}_{\rm coinv}(\Om_{n\d}^{\leq1},\Fs^\ast,\Fc(N)) = \bigoplus_{p,q\geq 0} C^{p,q}_{\rm coinv}(\Om_{n\d}^{\leq1},\Fs^\ast,\Fc(N)),
\end{equation}
where
\begin{align*}
& C^{p,q}_{\rm coinv}(\Om_{n\d}^{\leq1},\Fs^\ast,\Fc(N)) := \left(\Om_{n\d}^{\leq1} \ot \wg^p\Fs^\ast \ot \Fc(N)^{\ot\,q+1}\right)^{\Fc(N)} \\
& = \left\{ v \ot \eta \ot \widetilde{f} \mid v\ns{0} \ot \eta\ns{0} \ot \widetilde{f} \ot S(v\ns{1}\eta\ns{1}) = v \ot \eta \ot \widetilde{f}\ns{0} \ot \widetilde{f}\ns{1} \right\},
\end{align*}
and
\[
\widetilde{f}\ns{0} \ot \widetilde{f}\ns{1} := f^0\ps{1} \odots f^q\ps{1} \ot f^0\ps{2} \ldots f^q\ps{2}.
\]
As in \cite[Prop. 1.15]{MoscRang11}, \eqref{coinv-bicomplex} can be identified with the bicomplex \eqref{bicomplex-Om-s-F}.

\begin{proposition}
The mapping
\[
\Ic: C^{p,q}(\Om_{n\d}^{\leq1},\Fs^\ast,\Fc(N)) \lra C^{p,q}_{\rm coinv}(\Om_{n\d}^{\leq1},\Fs^\ast,\Fc(N))
\]
given by
\begin{align*}
& \Ic(v \ot \eta \ot f^1\odots f^q):= \\
& \hspace{1cm} v\ns{0} \ot \eta\ns{0} \ot f^1\ps{1} \ot S(f^1\ps{2})f^2\ps{1} \odots S(f^{q-1}\ps{2})f^q\ps{1} \ot S(v\ns{1}\eta\ns{1}f^q\ps{2})
\end{align*}
is an isomorphism.
\end{proposition}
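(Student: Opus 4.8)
The plan is to prove the proposition by producing an explicit two–sided inverse $\Jc$ of $\Ic$; this is forced on us because the ambient spaces are infinite dimensional, so no dimension count is available. The conceptual point guiding the construction is that $\Ic$ is exactly the normalization isomorphism of the fundamental theorem of Hopf modules, written out in Sweedler notation: the coinvariance relation defining $C^{p,q}_{\rm coinv}(\Om_{n\d}^{\leq1},\Fs^\ast,\Fc(N))$ is precisely the cotensor–product condition making $\Om_{n\d}^{\leq1}\ot\wg^p\Fs^\ast\ot\Fc(N)^{\ot q+1}$, with its total (diagonal) coaction, a cofree $\Fc(N)$–comodule over $\Om_{n\d}^{\leq1}\ot\wg^p\Fs^\ast\ot\Fc(N)^{\ot q}$. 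Accordingly $\Ic$ should be read as a triangular (unipotent) change of coordinates $f^1\odots f^q\mapsto f^1\ps1\ot S(f^1\ps2)f^2\ps1\odots S(f^{q-1}\ps2)f^q\ps1$ on the first $q$ tensor legs, twisted into the terminal leg $S(v\ns1\eta\ns1 f^q\ps2)$ by the coactions of $v$ and $\eta$.

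First I would check that $\Ic$ is well defined, i.e. that $\Ic(v\ot\eta\ot\widetilde{f})$ genuinely lies in $C^{p,q}_{\rm coinv}(\Om_{n\d}^{\leq1},\Fs^\ast,\Fc(N))$. This is a direct Sweedler computation: applying the total coaction to $\Ic(v\ot\eta\ot\widetilde{f})$ and using coassociativity of $\D$ together with the antipode axiom $h\ps1 S(h\ps2)=\ve(h)1$, the product of the second legs telescopes through the intermediate factors $S(f^{i}\ps2)f^{i+1}\ps1$, and cancels against $S(v\ns1\eta\ns1 f^q\ps2)$ so as to reproduce the defining identity of $C^{p,q}_{\rm coinv}$.

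Next I would write down the inverse $\Jc$ as the iterated partial–product map (the inverse of the unipotent change of variables above), followed by a counit on the terminal factor. Concretely it is given for $q=1$ by $w\ot\theta\ot g^0\ot g^1\mapsto w\ot\theta\ot g^0\,\ve(g^1)$, and for $q=2$ by $w\ot\theta\ot g^0\ot g^1\ot g^2\mapsto w\ot\theta\ot g^0\ps1\ot g^0\ps2 g^1\,\ve(g^2)$, the general case multiplying the iterated partial products $g^0\ps\bullet g^1\ps\bullet\cdots g^{k-1}$ and attaching $\ve(g^q)$ to the last of them. Verifying $\Jc\circ\Ic=\Id$ is the easy direction and needs no coinvariance: the partial products untangle through $h\ps1 S(h\ps2)=\ve(h)1$ to return $f^1,\dots,f^{q-1}$ together with the truncated leg $f^q\ps1$, while $\ve(g^q)=\ve(v\ns1)\ve(\eta\ns1)\ve(f^q\ps2)$ simultaneously completes $f^q\ps1$ to $f^q$ and, through the comodule counit axioms $v\ns0\ve(v\ns1)=v$ and $\eta\ns0\ve(\eta\ns1)=\eta$, restores $v$ and $\eta$.

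The step I expect to be the main obstacle is the reverse composite $\Ic\circ\Jc=\Id$ on coinvariant elements, since this is the only place where the coinvariance relation is genuinely consumed. After applying $\Jc$ and then re-applying $\Ic$, the comultiplications freshly introduced by $\Ic$ on the partial products must be shown to recombine into the original $g^0,\dots,g^q$, and the coactions $v\ns1,\eta\ns1$ regenerated in the terminal slot must be reconciled with the $g$'s; this reconciliation is exactly the content of the defining identity of $C^{p,q}_{\rm coinv}$ (used after transporting $S(v\ns1\eta\ns1)$ across the antipode). I anticipate this as the heaviest bookkeeping. As a cross–check and to keep the exposition short, one may instead note that the statement is the $(p,q)$–graded instance of the identification of \eqref{coinv-bicomplex} with \eqref{bicomplex-Om-s-F} already recorded following \cite[Prop. 1.15]{MoscRang11}, so that it suffices to match the explicit $\Jc$ above with the inverse constructed there.
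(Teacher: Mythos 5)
Your proposal is correct and follows essentially the same route as the paper: you exhibit the same explicit inverse (the iterated partial–product map with a counit on the terminal leg, which for general $q$ is exactly the paper's $\Ic^{-1}(v\ot \eta \ot f^0 \odots f^q) = v\ot \eta \ot f^0\ps{1} \ot f^0\ps{2}f^1\ps{1} \odots f^0\ps{q}\ldots f^{q-2}\ps{2}f^{q-1}\ve(f^q)$), check well-definedness into the coinvariants by the same telescoping Sweedler computation, and correctly isolate the composite on the coinvariant side as the only place where the coinvariance relation is consumed — which is precisely where the paper invokes it. The only difference is one of emphasis: the paper writes out the coinvariant-side composite in full and leaves the other direction as ``similarly,'' whereas you do the reverse, but both directions are handled and the mechanism you describe for the harder one is the right one.
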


\begin{proof}
We first show that the image is indeed in the coinvariant bicomplex. To this end we note that
\begin{align*}
& \left(v\ns{0} \ot \eta\ns{0}\right)\ns{0} \ot f^1\ps{1} \ot S(f^1\ps{2})f^2\ps{1} \ot \cdots \\
& \hspace{1cm}\cdots \ot S(f^{q-1}\ps{2})f^q\ps{1} \ot S(v\ns{1}\eta\ns{1}f^q\ps{2}) \ot S(\left(v\ns{0} \ot \eta\ns{0}\right)\ns{1}) = \\
& v\ns{0} \ot \eta\ns{0} \ot f^1\ps{1} \ot S(f^1\ps{2})f^2\ps{1} \ot\cdots \\
& \hspace{1cm}\cdots \ot S(f^{q-1}\ps{2})f^q\ps{1} \ot S(v\ns{2}\eta\ns{2}f^q\ps{2}) \ot S(v\ns{1} \eta\ns{1}) = \\
& v\ns{0} \ot \eta\ns{0} \ot f^1\ps{1} \ot S(f^1\ps{4})f^2\ps{1} \odots S(f^{q-1}\ps{4})f^q\ps{1} \ot S(v\ns{2}\eta\ns{2}f^q\ps{4}) \ot \\
& f^1\ps{2} S(f^1\ps{3})f^2\ps{2} \ldots S(f^{q-1}\ps{3})f^q\ps{2}  S(v\ns{1}\eta\ns{1}f^q\ps{3}) = \\
& v\ns{0} \ot \eta\ns{0} \ot \left(f^1\ps{1}\right)\ps{1} \ot \left(S(f^1\ps{2})f^2\ps{1}\right)\ps{1} \ot\cdots \\
& \hspace{2cm} \cdots \ot \left(S(f^{q-1}\ps{2})f^q\ps{1}\right)\ps{1} \ot S(v\ns{1}\eta\ns{1}f^q\ps{2})\ps{1} \ot \\
& \left(f^1\ps{1}\right)\ps{2} \left(S(f^1\ps{2})f^2\ps{1}\right)\ps{2} \ldots \left(S(f^{q-1}\ps{2})f^q\ps{1}\right)\ps{2}  S(v\ns{1}\eta\ns{1}f^q\ps{2})\ps{2}.
\end{align*}
Next, we observe the invertibility by introducing
\begin{align*}
& \Ic^{-1}(v\ot \eta \ot f^0 \odots f^q) := v\ot \eta \ot f^0\ps{1} \ot f^0\ps{2}f^1\ps{1} \odots f^0\ps{q}\ldots f^{q-2}\ps{2}f^{q-1}\ve(f^q).
\end{align*}
Indeed,
\begin{align*}
& \Ic(\Ic^{-1}(v\ot \eta \ot f^0 \odots f^q)) = \\
& \Ic(v\ot \eta \ot f^0\ps{1} \ot f^0\ps{2}f^1\ps{1} \odots f^0\ps{q}\ldots f^{q-2}\ps{2}f^{q-1}\ve(f^q)) = \\
&v\ns{0} \ot \eta\ns{0} \ot f^0\ps{1}\ps{1} \ot S(f^0\ps{1}\ps{2})f^0\ps{2}\ps{1}f^1\ps{1}\ps{1} \ot\cdots \\
&\cdots\ot S(f^0\ps{q-1}\ps{2}\ldots f^{q-2}\ps{1}\ps{2})f^0\ps{q}\ps{1}\ldots f^{q-2}\ps{2}\ps{1}f^{q-1}\ps{1} \ot \\
& \hspace{2cm} S(v\ns{1}\eta\ns{1}f^0\ps{q}\ps{2}\ldots f^{q-2}\ps{2}\ps{2}f^{q-1}\ps{2}))\ve(f^q) = \\
& v\ns{0} \ot \eta\ns{0} \ot f^0\ps{1} \odots f^{q-1}\ps{1} \ot \ve(f^q)S(f^0\ps{2}\ldots f^{q-2}\ps{2}f^{q-1}\ps{2})S(v\ns{1} \eta\ns{1}) = \\
& v \ot \eta \ot f^0\ps{1}\odots f^{q-1}\ps{1} \ot \ve(f^q\ps{1})S(f^0\ps{2}\ldots f^{q-2}\ps{2}f^{q-1}\ps{2}) f^0\ps{3}\ldots f^{q-1}\ps{3}f^q\ps{2} = \\
& v\ot \eta \ot f^0 \odots f^q,
\end{align*}
where we used the coinvariance condition in the fourth equality. Similarly we may observe
\[
\Ic^{-1}( \Ic(v\ot \eta \ot f^1 \odots f^q)) = v\ot \eta \ot f^1 \odots f^q.
\]
\end{proof}
\ni As a result, by the transfer of structure, we obtain
\begin{align*}
& d^{\rm coinv}_{\rm CE}:=\Ic \circ d_{\rm CE}\circ\Ic^{-1}: C^{p,q}_{\rm coinv}(\Om_{n\d}^{\leq1},\Fs^\ast,\Fc(N)) \lra C^{p+1,q}_{\rm coinv}(\Om_{n\d}^{\leq1},\Fs^\ast,\Fc(N)) \\
& b^{\ast\,{\rm coinv}}_N:=\Ic \circ b^\ast_N\circ\Ic^{-1}: C^{p,q}_{\rm coinv}(\Om_{n\d}^{\leq1},\Fs^\ast,\Fc(N)) \lra C^{p,q+1}_{\rm coinv}(\Om_{n\d}^{\leq1},\Fs^\ast,\Fc(N))
\end{align*}
Now we identify, just as \cite[Prop. 1.16]{MoscRang11} the coinvariant bicomplex \eqref{coinv-bicomplex} with the bicomplex \eqref{group-bicomplex}.

\begin{proposition}
The map 
\[
\Jc:C^{\ast,\ast}_{\rm coinv}(\Om_{n\d}^{\leq1},\Fs^\ast,\Fc(N)) \to C_{\rm pol}^\ast(N,\Om_1^{\leq1}\ot \wg^\ast\,\Fs^\ast),
\]
given by
\[
\Jc(v\ot \eta \ot f^0\odots f^q)(\psi_0,\ldots, \psi_q) = v \ot \eta \,f^0(\psi_0)\ldots f^q(\psi_q),
\]
is an isomorphism of bicomplexes.
\end{proposition}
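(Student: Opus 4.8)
The plan is to verify, in this order, that $\Jc$ (i) is well defined, i.e.\ its image really consists of homogeneous (equivariant) polynomial cochains; (ii) is a linear isomorphism; and (iii) intertwines the two pairs of coboundary maps, namely $b^{\ast\,{\rm coinv}}_N$ with the group coboundary \eqref{b_N}, and $d^{\rm coinv}_{\rm CE}$ with $b_\Fs$ of \eqref{b_s}. The whole argument follows the template of \cite[Prop.\ 1.16]{MoscRang11}; the only genuinely new feature is that the coefficient system is now the infinite-dimensional module $\Om_{n\d}^{\leq1}$ carrying a nontrivial $\Fc(N)$-coaction, so every identity that was previously an identity of scalars must now be tracked together with the coaction leg of $v\in\Om_{n\d}^{\leq1}$.

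For the first two points I would use that $\Fc(N)$ is the coordinate Hopf algebra of $N$, so that evaluation identifies $\Fc(N)^{\ot\,q+1}$ with the polynomial functions on $N^{q+1}$ via $f^0\odots f^q\mapsto \big((\psi_0,\ldots,\psi_q)\mapsto f^0(\psi_0)\ldots f^q(\psi_q)\big)$, with $\D(f)(\psi,\psi')=f\ps1(\psi)f\ps2(\psi')$ and $S(f)(\psi)=f(\psi^{-1})$. Under this identification, and recalling that the $N$-action on $\Om_1^{\leq1}\ot \wg^p\Fs^\ast$ is the integrated form of the $\Fc(N)$-coaction, the equivariance requirement $\phi(\psi\psi_0,\ldots,\psi\psi_q)=\psi\cdot\phi(\psi_0,\ldots,\psi_q)$ becomes, after expanding each $f^i(\psi\psi_i)=f^i\ps1(\psi)f^i\ps2(\psi_i)$ and regrouping the $\psi$-dependent factors, exactly the coinvariance condition defining $C^{p,q}_{\rm coinv}$. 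This proves (i); since the same identification carries the coinvariant subspace bijectively onto the equivariant cochains, it simultaneously furnishes the inverse of $\Jc$ and proves (ii). Non-degeneracy here is guaranteed by the pairing between $\Fc(N)$ and $U(\Fn)$ used throughout the paper.

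For (iii), the horizontal map is the easier half: tracing \eqref{b_N-ast} through the isomorphism $\Ic$ turns the alternating sum of coproduct-insertions into the alternating sum of face (omission) maps of the homogeneous bar complex, and evaluation converts these into the deletions $\sum_i(-1)^i\phi(\ldots,\widehat{\psi_i},\ldots)$ of \eqref{b_N}; the boundary term carrying the antipode matches the final face after using $S(f)(\psi)=f(\psi^{-1})$ together with the equivariance already established. The vertical map is more delicate: the coboundary $d_{\rm CE}$ of \eqref{d_CE} has three constituents, the de Rham term $d_{\rm DR}(\eta)$, the coefficient term $-X_i\cdot\om\ot\upsilon^i\wg\eta$, and the $\bullet$-term $-\om\ot\upsilon^i\wg\eta\ot X_i\bullet\widetilde f$. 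The first two assemble into $d^\Om_{\rm CE}$ acting on $\Om_1^{\leq1}\ot\wg^p\Fs^\ast$, while the $\bullet$-term must be matched with the correction $-\sum_{j=-1}^0\t^j\wg(e_j\rt\phi)$ of $b_\Fs$, the wedge factors $\upsilon^i$ being the dual basis $\t^j$.

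The main obstacle is precisely this last matching of the vertical differentials: one must show that the action $X_i\bullet\widetilde f$ on the tensor powers of $\Fc(N)$ integrates, under evaluation, to the infinitesimal $\Fs$-action $e_j\rt\phi$ of \eqref{b_s}, with correct bookkeeping of the coaction leg of $v$. Lemma \ref{lemma-bullet} already records the Leibniz behaviour of $\bullet$ across a tensor product, which is exactly the input needed to see that differentiating $f^0(\psi_0\lt\exp(tX))\ldots f^q(\psi_q\lt\exp(tX))$ in $t$ at $t=0$ reproduces $X\bullet(f^0\odots f^q)$ term by term; combined with the fact that the $\Fc(N)$-coaction dualizes the left $\Fn$-action on $g\ell_n^{\rm aff}$ (the Remark following \eqref{coact-g-F}), this closes the argument. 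Once both coboundaries are matched, $\Jc$ is an isomorphism of bicomplexes as claimed.
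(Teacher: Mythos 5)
Your proposal is correct and follows essentially the same route as the paper: well-definedness via the coinvariance condition, bijectivity through the identification of $\Fc(N)^{\ot\,q+1}$ with polynomial functions on $N^{q+1}$, and the chain-map property checked separately for the horizontal and vertical coboundaries, with the crucial matching of the $\bullet$-action against the infinitesimal right-translation $e_j\rt\phi$. The only difference is cosmetic: where you sketch that last identity from Lemma \ref{lemma-bullet} and the duality of the coaction with the $\Fn$-action, the paper simply invokes \cite[Eq.\ (1.50)]{MoscRang11}.
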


\begin{proof}
We begin with the well-definedness. Indeed,
\begin{align*}
& \Jc(v\ot \eta \ot f^0\odots f^q)(\psi_0\psi,\ldots, \psi_q\psi) = \\
& \Jc(v\ot \eta \ot f^0\ps{1}\odots f^q\ps{1} \ot f^0\ps{2}\ldots f^q\ps{2})(\psi_0,\ldots, \psi) = \\
& \Jc(v\ns{0} \ot \eta\ns{0} \ot f^0\odots f^q \ot S(v\ns{1}\eta\ns{1}))(\psi_0,\ldots, \psi_q,\psi) = \\
& v\ns{0} \ot \eta\ns{0}\, f^0(\psi_0)\ldots f^q(\psi_q)S(v\ns{1}\eta\ns{1})(\psi) = \\
& v\ns{0} \ot \eta\ns{0}\, f^0(\psi_0)\ldots f^q(\psi_q)(v\ns{1}\eta\ns{1})(\psi^{-1})= \\
& v\ns{0} \ot \eta\ns{0}\, f^0(\psi_0)\ldots f^q(\psi_q)v\ns{1}(\psi^{-1})\eta\ns{1}(\psi^{-1}) = \\
& \psi^{-1}\cdot (v \ot \eta)\, f^0(\psi_0)\ldots f^q(\psi_q) = \left((v \ot \eta)\, f^0(\psi_0)\ldots f^q(\psi_q)\right) \cdot \psi= \\
& \Jc(v\ot \eta \ot f^0\odots f^q)(\psi_0,\ldots, \psi_q) \cdot \psi.
\end{align*}
Let us now check the compatibility with the coboundaries. To begin with, we have
\begin{align*}
& b_N(\Jc(v\ot \eta \ot f^0\odots f^q))(\psi_0,\ldots, \psi_{q+1}) =\\
& \sum_{i=0}^{q+1}\,(-1)^i\,\Jc(v\ot \eta \ot f^0\odots f^q)(\psi_0,\ldots, \widehat{\psi}_i,\ldots,\psi_{q+1}) =\\
& \sum_{i=0}^{q+1}\,(-1)^i\, v\ot \eta \,f^0(\psi_0)\ldots f^i(\psi_{i+1})\ldots  f^q(\psi_{q+1}).
\end{align*}
On the other hand,
\begin{align*}
& b^{\ast\,{\rm coinv}}_N(v\ot \eta \ot f^0\odots f^q) =  \Ic \circ b^\ast_N\circ\Ic^{-1} (v\ot \eta \ot f^0\odots f^q) = \\
& \Ic \circ b^\ast_N \left(v \ot \eta \ot f^0\ps{1} \ot f^0\ps{2}f^1\ps{1} \odots f^0\ps{q}\ldots f^{q-2}\ps{2}f^{q-1}\ve(f^q)\right) = \\
& \Ic \Big( v \ot \eta \ot 1 \ot f^0\ps{1} \ot f^0\ps{2}f^1\ps{1} \odots f^0\ps{q}\ldots f^{q-2}\ps{2}f^{q-1}\ve(f^q) + \\
& \sum_{i=0}^q\,(-1)^i\,v \ot \eta \ot f^0\ps{1} \ot f^0\ps{2}f^1\ps{1} \odots \D(f^0\ps{i}f^1\ps{i-1}\ldots f^{i-1}\ps{1}) \ot \cdots \\
& \hspace{2cm} \cdots\ot f^0\ps{q}\ldots f^{q-2}\ps{2}f^{q-1}\ve(f^q) + \\
& (-1)^{q+1} \, v\ns{0} \ot \eta\ns{0} \ot f^0\ps{1} \ot f^0\ps{2}f^1\ps{1} \ot\cdots \\
& \hspace{2cm} \cdots\ot f^0\ps{q}\ldots f^{q-2}\ps{2}f^{q-1}\ve(f^q) \ot S(v\ns{1})\eta\ns{-1} \Big).
\end{align*}
Now we note, in view of the coinvariance property, that
\begin{align*}
& \Ic\Big( v \ot \eta \ot 1 \ot f^0\ps{1} \ot f^0\ps{2}f^1\ps{1} \odots f^0\ps{q}\ldots f^{q-2}\ps{2}f^{q-1}\ve(f^q) \Big) = \\
& v\ns{0} \ot \eta\ns{0} \ot 1 \ot S(1)f^0\ps{1}\ps{1} \ot S(f^0\ps{1}\ps{2})f^0\ps{2}\ps{1}f^1\ps{1}\ps{1} \ot\cdots \\
& \cdots\ot S(f^0\ps{q-1}\ps{2}\ldots f^{q-2}\ps{1}\ps{2}) f^0\ps{q}\ps{1}\ldots f^{q-2}\ps{2}\ps{1}f^{q-1}\ps{1}\ve(f^q) \ot \\
& S(v\ns{1}\eta\ns{1}f^0\ps{q}\ps{2}\ldots f^{q-2}\ps{1}\ps{2}f^{q-1}\ps{2}) = \\
& v \ot \eta \ot 1 \ot f^0 \odots f^q,
\end{align*}
that
\begin{align*}
& \Ic\Big( v \ot \eta \ot f^0\ps{1} \ot f^0\ps{2}f^1\ps{1} \odots \D(f^0\ps{i}f^1\ps{i-1}\ldots f^{i-1}\ps{1}) \ot \cdots \\
& \hspace{2cm} \cdots\ot f^0\ps{q}\ldots f^{q-2}\ps{2}f^{q-1}\ve(f^q)\Big) = \\
& v\ns{0} \ot \eta\ns{0} \ot f^0\ps{1}\ps{1} \ot S(f^0\ps{1}\ps{2})f^0\ps{2}\ps{1}f^1\ps{1}\ps{1} \ot \cdots \\
& \cdots \ot S(f^0\ps{i}\ps{1}\ps{2}f^1\ps{i-1}\ps{1}\ps{2}\ldots f^{i-1}\ps{1}\ps{1}\ps{2})f^0\ps{i}\ps{2}\ps{1}f^1\ps{i-1}\ps{2}\ps{1}\ldots f^{i-1}\ps{1}\ps{2}\ps{1} \ot \cdots \\
& \cdots \ot S(v\ns{1}\eta\ns{1}f^0\ps{q}\ps{2}\ldots f^{q-2}\ps{2}\ps{2}f^{q-1}\ps{2}) = \\
& v \ot \eta \ot f^0 \odots f^{i-1} \ot 1 \ot f^i \odots f^q,
\end{align*}
and that
\begin{align*}
& \Ic\Big( v\ns{0} \ot \eta\ns{0} \ot f^0\ps{1} \ot f^0\ps{2}f^1\ps{1} \ot\cdots \\
& \hspace{2cm} \cdots\ot f^0\ps{q}\ldots f^{q-2}\ps{2}f^{q-1}\ve(f^q) \ot S(v\ns{1})\eta\ns{-1} \Big) = \\
& v\ns{0}\ns{0} \ot \eta\ns{0}\ns{0} \ot f^0\ps{1}\ps{1} \ot S(f^0\ps{1}\ps{2})f^0\ps{2}\ps{1}f^1\ps{1}\ps{1} \ot\cdots \\
&\hspace{1.5cm} \cdots\ot S(f^0\ps{q}\ps{2}\ldots f^{q-2}\ps{2}\ps{2}f^{q-1}\ps{2}\ve(f^q))S(v\ns{1}\eta\ns{1})\ps{1} \ot \\
& S(v\ns{0}\ns{1}\eta\ns{0}\ns{1}S(v\ns{1}\eta\ns{1})\ps{2}) = \\
& v\ot \eta \ot f^0 \odots f^q \ot 1.
\end{align*}
As a result,
\begin{align*}
& \Jc(b^{\ast\,{\rm coinv}}_N(v\ot \eta \ot f^0\odots f^q))(\psi_0, \ldots, \psi_{q+1}) = \sum_{i=0}^{q+1}\,(-1)^i\, v\ot \eta \,f^0(\psi_0)\ldots f^i(\psi_{i+1})\ldots  f^q(\psi_{q+1}).
\end{align*}
We thus observe that $\Jc \circ b^{\ast\,{\rm coinv}}_N = b_N \circ \Jc$. Let us proceed to $b_\Fs \circ \Jc = \Jc \circ d_{\rm CE}^{\rm coinv}$, that is, $b_\Fs \circ \Jc = \Jc \circ \Ic \circ d_{\rm CE}\circ \Ic^{-1}$, the compatibility with the vertical coboundary maps. To this end, we shall verify
\[
\Ic^{-1} \circ \Jc^{-1} \circ b_\Fs \circ \Jc = d_{\rm CE}\circ \Ic^{-1}.
\]
On the one hand we have
\begin{align*}
& d_{\rm CE}\circ \Ic^{-1} (v \ot \eta \ot f^0\odots f^q) = \\
& d_{\rm CE} \left(v \ot \eta \ot f^0\ps{1} \ot f^0\ps{2}f^1\ps{1} \odots f^0\ps{q}\ldots f^{q-2}\ps{2}f^{q-1}\ve(f^q)\right) = \\
& d_{\rm CE}^\Om(v \ot \eta) \ot f^0\ps{1} \ot f^0\ps{2}f^1\ps{1} \odots f^0\ps{q}\ldots f^{q-2}\ps{2}f^{q-1}\ve(f^q) - \\
& v \ot \t^j\wg \eta \ot e_j\bullet\left(f^0\ps{1} \ot f^0\ps{2}f^1\ps{1} \odots f^0\ps{q}\ldots f^{q-2}\ps{2}f^{q-1}\ve(f^q)\right),
\end{align*}
and on the other hand
\begin{align*}
& \Ic^{-1} \circ \Jc^{-1} \circ b_\Fs \circ \Jc (v \ot \eta \ot f^0\odots f^q) = \\
& \Ic^{-1} \left(d_{\rm CE}^\Om(v \ot \eta) \ot f^0 \odots f^q\right) - \\
& \hspace{2cm} \Ic^{-1} \left(v \ot \t^j \wg \eta \ot e_j\rt (f^0\odots f^q)\right) = \\
& d_{\rm CE}^\Om(v \ot \eta) \ot f^0\ps{1} \ot f^0\ps{2}f^1\ps{1} \odots f^0\ps{q}\ldots f^{q-2}\ps{2}f^{q-1}\ve(f^q) - \\
& v \ot \t^j\wg \eta \ot e_j\bullet\left(f^0\ps{1} \ot f^0\ps{2}f^1\ps{1} \odots f^0\ps{q}\ldots f^{q-2}\ps{2}f^{q-1}\ve(f^q)\right),
\end{align*}
where the last equality is a direct consequence of \cite[Eq. (1.50)]{MoscRang11}.
\end{proof}

\ni As a result, we now have the classes 
\begin{equation*}
\Jc \circ \Ic \left( [\one \ot \t^0]_1 \right) \in E_1^{0,1}, \qquad \Jc \circ \Ic \left( [\sum_{i\geq 1}\,(i+1)if^{i-1} \ot {\bf x}_i]_1 \right) \in E^{1,0}_1
\end{equation*}
in the bicomplex \eqref{group-bicomplex}. We note further that, since they are not coboundaries of continuous 0-cochains, they survive in the continuous bicomplex \eqref{cont-group-bicomplex} which is the $E_1$-term of the Cartan-Leray spectral sequence computing the group cohomology $H_{\rm cont}^\ast(\Diff(\Rb),\Om_1^{\leq1})$.

\begin{proposition}
The cochain
\[
d\ell \in C_{\rm cont}^1(N,\Om_1^{\leq1}\ot \wg^0\Fs^\ast),
\]
given by
\[
d\ell: \psi \mapsto d\log(\psi'(x))dx = \frac{\psi''(x)}{\psi'(x)}\,dx,
\]
is a cocycle in the bicomplex \eqref{cont-group-bicomplex}.
\end{proposition}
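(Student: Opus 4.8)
The plan is to place $d\ell$ in bidegree $(0,1)$ of the double complex \eqref{cont-group-bicomplex}, that is, in $C^1_{\rm cont}(N,\Om_1^{\leq1}\ot\wg^0\Fs^\ast)$, so that being a total cocycle splits into the two independent conditions $\wbar{b}_N(d\ell)=0$ and $b_\Fs(d\ell)=0$. The single device driving both is the observation that $\widetilde{d\ell}:\Diff(\Rb)\to\Om_1^1$, $g\mapsto \frac{g''}{g'}\,dx=d\log(g')\,dx$, is a continuous $1$-cocycle (crossed homomorphism) for all of $\Diff(\Rb)$ acting on $\Om_1^1$ by $f\,dx\cdot g=f(g(x))\,g'(x)\,dx$, and that it vanishes identically on the affine subgroup $S=GL_1^{\rm aff}$, since $s''=0$ whenever $s$ is affine. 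I would first isolate the crossed-homomorphism identity $\widetilde{d\ell}(gh)=\widetilde{d\ell}(g)\cdot h+\widetilde{d\ell}(h)$, which is just the chain rule $\log((g\circ h)')=\log(g'\circ h)+\log(h')$ differentiated once in $x$, together with $\widetilde{d\ell}|_S=0$.

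For the horizontal condition I would specialise this identity to $g,h\in N$. With the inhomogeneous coboundary of \eqref{b_N} one gets $\wbar{b}_N(d\ell)(\psi_1,\psi_2)=d\ell(\psi_2)-d\ell(\psi_1\psi_2)+d\ell(\psi_1)\cdot\psi_2$, and substituting $d\ell(\psi_1\psi_2)=d\ell(\psi_1)\cdot\psi_2+d\ell(\psi_2)$ collapses the three terms to zero. This step is routine and is exactly the classical statement that the logarithmic derivative is a group $1$-cocycle.

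For the vertical condition I would use the decomposition $\Diff(\Rb)=S\cdot N$ to write, for $\psi\in N$ and $s\in S$, the product $\psi\circ s=s'\circ(\psi\lt s)$ with $s'\in S$ and $\psi\lt s\in N$ (this $\psi\lt s$ is precisely the right $\Fs$-action appearing in \eqref{b_s}). Evaluating $\widetilde{d\ell}$ on both factorisations of $\psi\circ s$ and using $\widetilde{d\ell}|_S=0$ twice yields the equivariance relation $d\ell(\psi\lt s)=d\ell(\psi)\cdot s$. Differentiating at $s=\exp(tX)$, $t=0$, and identifying the infinitesimal right $\Diff$-action on $\Om_1^1$ with the Lie derivative (the right action being pullback), I obtain $(X\rt d\ell)(\psi)=\Lc_X\big(d\ell(\psi)\big)=X\cdot d\ell(\psi)$ for every $X\in\Fs$; that is, the action on the group argument coincides with the coefficient action. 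Feeding this into \eqref{b_s} makes the Chevalley--Eilenberg term $d^\Om_{\rm CE}(d\ell)=\sum_{j}\t^j\wg(e_j\cdot d\ell)$ cancel the correction $-\sum_j\t^j\wg(e_j\rt d\ell)$ exactly, so $b_\Fs(d\ell)=0$.

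The main obstacle is the vertical step, and concretely the right $\Fs$-action on $N$ for the translation generator $e_{-1}=\partial_x$: whereas the scaling $e_0=x\partial_x$ acts by honest conjugation, $\psi\lt s_t(x)=e^{-t}\psi(e^tx)$, conjugation by a translation does not preserve $N$, and one must extract the $N$-part of the $S\cdot N$-factorisation, which gives $\psi\lt\tau_t(x)=\frac{\psi(x+t)-\psi(t)}{\psi'(t)}$. I would verify by direct differentiation that $d\ell(\psi\lt\tau_t)=\frac{\psi''}{\psi'}(x+t)\,dx$ and $d\ell(\psi\lt s_t)=e^{t}\frac{\psi''}{\psi'}(e^tx)\,dx$, whose $t$-derivatives at $0$ reproduce $\Lc_{e_{-1}}$ and $\Lc_{e_0}$ applied to $d\ell(\psi)$. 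This makes the abstract equivariance relation explicit and simultaneously fixes the sign conventions in \eqref{b_s} that force the cancellation.
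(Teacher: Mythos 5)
Your proposal is correct, and it reaches the same two identities the paper needs, but it organises the vertical step differently. For the horizontal condition you and the paper do the same thing: the chain rule gives $\frac{(\psi_1\circ\psi_2)''}{(\psi_1\circ\psi_2)'}=\frac{\psi_1''(\psi_2(x))}{\psi_1'(\psi_2(x))}\psi_2'(x)+\frac{\psi_2''(x)}{\psi_2'(x)}$, which is exactly the crossed-homomorphism identity, and the three terms of $\wbar{b}_N(d\ell)$ cancel. For the vertical condition the paper proceeds by brute force: it computes the twisted right action $(\psi\lt\exp(te_{-1}))(x)=\frac{\psi(x+t)-\psi(t)}{\psi'(t)}$ and $(\psi\lt\exp(te_0))(x)=\frac{\psi(tx)}{t}$ explicitly, evaluates $d\ell$ on each, and observes that $d\ell(\psi)\cdot\exp(te_j)-d\ell(\psi\lt\exp(te_j))$ vanishes identically in $t$ before differentiating. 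You instead derive the equivariance $d\ell(\psi\lt s)=d\ell(\psi)\cdot s$ once and for all from two structural facts: $\widetilde{d\ell}$ is a $1$-cocycle on all of $\Diff(\Rb)$, and it vanishes on the affine subgroup $S$ because $s''=0$ there; applying the cocycle identity to the two factorisations of $\psi\circ s=(\psi\rt s)\circ(\psi\lt s)$ kills the $S$-contributions and leaves the equivariance, which differentiates to $b_\Fs(d\ell)=0$. This is a cleaner packaging — it explains \emph{why} the cancellation in the paper's computation is exact rather than accidental, and it would generalise to any cocycle on $\Diff(\Rb)$ vanishing on $S$ — at the cost of having to justify the factorisation $\psi\circ s=(\psi\rt s)\circ(\psi\lt s)$ and the identification of the derivative of the coefficient action with the Lie derivative, both of which you correctly flag and which the paper supplies in the course of its direct computation anyway. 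Your explicit formulas for $\psi\lt\tau_t$ and $d\ell(\psi\lt\tau_t)$ agree with the paper's, so the two arguments are mutually consistent.
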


\begin{proof}
Let us first observe that
\begin{align*}
& b_N(d\ell)(\psi_1,\psi_2) = (d\ell)(\psi_2) - (d\ell)(\psi_1\psi_2) + (d\ell)(\psi_1)\cdot \psi_2 = \\
& \frac{\psi_2''(x)}{\psi_2'(x)}\,dx - \frac{(\psi_1\psi_2)''(x)}{(\psi_1\psi_2)'(x)}\,dx + \frac{\psi_1''(x)}{\psi_1'(x)}\,dx\cdot \psi_2 = \\
& \frac{\psi_2''(x)}{\psi_2'(x)}\,dx - \left[\frac{\psi_1''(\psi_2(x))\psi_2'(x)^2 + \psi_1'(\psi_2(x))\psi_2''(x)}{\psi_1'(\psi_2(x))\psi_2'(x)}\,dx\right] + \frac{\psi_1''(\psi_2(x))}{\psi_1'(\psi_2(x))}\psi_2'(x)\,dx = 0.
\end{align*}
Next, we see that
\begin{align*}
& b_\Fs(d\ell)(\psi) = d^\Om_{\rm CE}(d\ell(\psi)) - \t^{-1} \wg (e_{-1} \rt d\ell)(\psi) - \t^0 \wg (e_0 \rt d\ell)(\psi) = \\
& d\ell(\psi) \cdot e_{-1} \ot \t^{-1} + d\ell(\psi)\cdot e_0 \ot \t^0 - d\ell(\psi\lt e_{-1}) \ot \t^{-1} - d\ell(\psi\lt e_0) \ot \t^0 = \\
& \left(d\ell(\psi) \cdot e_{-1} - d\ell(\psi\lt e_{-1})\right)\ot \t^{-1} + \left(d\ell(\psi) \cdot e_0 - d\ell(\psi\lt e_0)\right)\ot \t^0.
\end{align*}
We thus have to recall that
\[
\exp(te_{-1}):\Rb \lra \Rb, \qquad \exp(te_{-1})(x) = x + t,
\]
and
\[
\exp(te_0):\Rb \lra \Rb, \qquad \exp(te_0)(x) = tx,
\]
and on the other hand, for any $\phi \in \Diff(\Rb)$,
\[
\phi = \vp\psi, \qquad \vp(x) = \phi'(0)x + \phi(0), \qquad \psi = \vp^{-1}\phi.
\]
Then since the mutual actions satisfy $\psi\vp = (\psi \rt \vp)(\psi \lt \vp)$, we have
\[
(\psi \lt \vp)(x) = \frac{(\psi\vp)(x)}{(\psi\vp)'(0)} - \frac{(\psi\vp)(0)}{(\psi\vp)'(0)}.
\]
In particular,
\[
(\psi \lt \exp(te_{-1}))(x) = \frac{\psi(x+t)}{\psi'(t)} - \frac{\psi(t)}{\psi'(t)},
\]
and keeping $\psi(0)=0$ and $\psi'(0) = 1$ in mind,
\[
(\psi \lt \exp(te_0))(x) = \frac{\psi(tx)}{t}.
\]
Hence, we have
\[
d\ell(\psi \lt \exp(te_{-1}))(x) = \frac{\psi''(x+t)}{\psi'(x+t)},
\]
and 
\[
d\ell(\psi \lt \exp(te_0))(x) = \frac{t\psi''(tx)}{\psi'(tx)}.
\]
As a result,
\begin{align*}
& b_\Fs(d\ell)(\psi) = \\
& \dt\, \left[\frac{\psi''(x+t)}{\psi'(x+t)}(x+t)' - \frac{\psi''(x+t)}{\psi'(x+t)}\right] \ot \t^{-1} +  \dt\, \left[\frac{\psi''(tx)}{\psi'(tx)}(tx)' - \frac{t\psi''(tx)}{\psi'(tx)}\right] \ot \t^0 = 0.
\end{align*}
\end{proof}

\begin{proposition}
The cochain
\[
\one \ot \t^0 + \ell \in C_{\rm cont}^0(N,\Om_1^{\leq1}\ot \wg^1\Fs^\ast) \oplus C_{\rm cont}^1(N,\Om_1^{\leq1}\ot \wg^0\Fs^\ast),
\]
where
\[
\ell: \psi \mapsto \log(\psi'(x)),
\]
is a cocycle in the bicomplex \eqref{cont-group-bicomplex}.
\end{proposition}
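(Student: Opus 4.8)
The plan is to view $\Phi := \one\ot\t^0 + \ell$ as an element of total degree $1$ in the bicomplex \eqref{cont-group-bicomplex}, with $\one\ot\t^0$ sitting in bidegree $(p,q)=(1,0)$ and $\ell$ in bidegree $(0,1)$. Writing the total coboundary as $b_\Fs + (-1)^p b_N$, the assertion $D\Phi=0$ splits into the three bidegree components
\begin{align*}
& (2,0):\quad b_\Fs(\one\ot\t^0) = 0, \\
& (1,1):\quad b_\Fs(\ell) = b_N(\one\ot\t^0), \\
& (0,2):\quad b_N(\ell) = 0.
\end{align*}
First I would dispose of the two ``pure'' conditions, and then concentrate on the mixed one, which carries all the content.

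For the $(0,2)$-component, $b_N(\ell)=0$ is the crossed-homomorphism identity for $\ell(\psi)=\log(\psi'(x))$, proved exactly as $b_N(d\ell)=0$ in the preceding proposition: expanding $b_N(\ell)(\psi_1,\psi_2) = \ell(\psi_2) - \ell(\psi_1\psi_2) + \ell(\psi_1)\cdot\psi_2$ and invoking the chain rule $\log((\psi_1\psi_2)'(x)) = \log(\psi_1'(\psi_2(x))) + \log(\psi_2'(x))$ together with the action $f(x)\cdot\psi = f(\psi(x))$ on $0$-forms, all terms cancel in pairs. For the $(2,0)$-component, $\one\ot\t^0$ is a constant $0$-cochain, so the right-translation terms $e_j\rt(\one\ot\t^0)$ in \eqref{b_s} drop out and $b_\Fs$ reduces to the Lie-algebra coboundary $d^\Om_{\rm CE}$; thus $b_\Fs(\one\ot\t^0) = d^\Om_{\rm CE}(\one\ot\t^0) = 0$ is precisely the vanishing already recorded in \eqref{vertical-1}.

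The heart of the argument is the $(1,1)$-identity $b_\Fs(\ell) = b_N(\one\ot\t^0)$, and this is where I expect the real work. On the left, I would expand $b_\Fs(\ell)(\psi)$ via \eqref{b_s} into $d^\Om_{\rm CE}(\log\psi'(x)) - \t^{-1}\wg(e_{-1}\rt\ell)(\psi) - \t^0\wg(e_0\rt\ell)(\psi)$. The Lie-derivative term contributes $(\psi''/\psi')\ot\t^{-1} + (x\psi''/\psi')\ot\t^0$, while the infinitesimal right-translations are computed by differentiating the matched-pair right action $\lt$ along the flows of $e_{-1}$ and $e_0$ recalled in the $d\ell$-proposition; for instance $(\psi\lt\exp(te_{-1}))'(x) = \psi'(x+t)/\psi'(t)$ gives $(e_{-1}\rt\ell)(\psi) = \psi''/\psi' - \psi''(0)$, and similarly $(e_0\rt\ell)(\psi) = x\psi''/\psi'$. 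The non-constant parts $\psi''/\psi'$ and $x\psi''/\psi'$ then cancel against the Lie-derivative term, leaving the single constant contribution $\psi''(0)\,(\one\ot\t^{-1})$. On the right, $b_N(\one\ot\t^0)(\psi) = \one\ot\t^0 - (\one\ot\t^0)\cdot\psi$; the diagonal $N$-action is trivial on the constant $\one\in\Om_1^0$, while on $\t^0$ the action induced by \eqref{F-coact-s} (where $\eta^1_{11}(\psi) = \psi''(0)/\psi'(0) = \psi''(0)$) gives $\t^0\cdot\psi = \t^0 - \psi''(0)\,\t^{-1}$, so that $b_N(\one\ot\t^0)(\psi) = \psi''(0)\,(\one\ot\t^{-1})$, matching the left-hand side. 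The main obstacle is the exact bookkeeping here: correctly differentiating the matched-pair right action to obtain the $e_j\rt\ell$, fixing the sign convention in $d^\Om_{\rm CE}$ so that the $\psi''/\psi'$-contributions cancel rather than add, and reading off from \eqref{F-coact-s} — with due care for the inverse $\psi^{-1}$ that enters the integrated coaction — that the residual term is exactly $-\psi''(0)\,\t^{-1}$.
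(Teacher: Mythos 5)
Your proposal is correct and follows essentially the same route as the paper: split the total cocycle condition into the $(2,0)$, $(1,1)$, and $(0,2)$ bidegree components, dispose of $b_\Fs(\one\ot\t^0)=0$ and $b_N(\ell)=0$ directly (the latter by the chain rule for $\log$), and match $b_\Fs(\ell)(\psi)=\psi''(0)\ot\t^{-1}$ against the $\t^{-1}$-term produced by the coaction \eqref{F-coact-s} in $b_N(\one\ot\t^0)$, exactly as in the paper's computation via $\ell(\psi\lt\exp(te_{-1}))$ and $\ell(\psi\lt\exp(te_0))$. The only divergence is an overall sign bookkeeping in the mixed component (you write $b_\Fs(\ell)=b_N(\one\ot\t^0)$ with the convention $b_\Fs+(-1)^pb_N$, while the paper records $b_N(\one\ot\t^0)=-\psi''(0)\ot\t^{-1}$ so that the two contributions cancel in an unsigned sum), which is a matter of convention and does not affect correctness.
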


\begin{proof}
To begin with, we already have
\[
b_\Fs(\one \ot \t^0) = \one\cdot e_j \ot \t^j\wg \t^0 + \one \ot d_{\rm DR}(\t^0) = 0.
\]
On the other hand, for the horizontal coboundary we observe from \eqref{F-coact-s}, and \cite[Eqn. (3.33)]{Antal-thesis} that
\begin{align*}
& b_N(\one \ot \t^0) (\psi) = (\one \ot \t^0)\cdot \psi = \one \ot \t^0\ns{-1}(\psi)\, \t^0\ns{0} = \\
& \one \ot \d_1(\psi)\t^{-1} = - \one \ot \psi''(0)\t^{-1}.
\end{align*}
We proceed to $\ell \in C_{\rm cont}^1(N,\Om_1^{\leq1}\ot \wg^0\Fs^\ast)$. On one hand we have
\begin{align*}
& b_\Fs(\ell)(\psi) = d^\Om_{\rm CE}(\ell(\psi)) - \t^{-1} \wg (e_{-1} \rt \ell)(\psi) - \t^0 \wg (e_0 \rt \ell)(\psi) = \\
& \left(\ell(\psi) \cdot e_{-1} - \ell(\psi\lt e_{-1})\right)\ot \t^{-1} + \left(\ell(\psi) \cdot e_0 - \ell(\psi\lt e_0)\right)\ot \t^0 = \\
& \dt\, \left[\log(\psi'(x+t))(x+t)' - \log\left(\frac{\psi'(x+t)}{\psi'(t)}\right)\right] \ot \t^{-1} + \\
& \hspace{2cm} \dt\, \left[\log(\psi'(tx))(tx)' - \log(\psi'(tx))\right] \ot \t^0 = \psi''(0)\t^{-1},
\end{align*}
and on the other hand,
\begin{align*}
& b_N(\ell)(\psi_1,\psi_2) = \ell(\psi_2) - \ell(\psi_1\psi_2) + \ell(\psi_1)\cdot \psi_2 = \\
& \log(\psi'_2(x)) - \log(\psi'_1(\psi_2(x))\psi'_2(x)) + \log(\psi'_1(\psi_2(x))) = 0.
\end{align*}
\end{proof}

\ni Since there are the only two classes in $H^1(\Diff(\Rb),\Om_1^{\leq1})$, by Proposition \ref{prop-van-est-diff}, we conclude
\[
[\Jc \circ \Ic \left( \lambda' \right)]_1 = [\one \ot \t^0]_1 + [\ell]_1, \qquad [\Jc \circ \Ic \left( \mu' \right)] = [d\ell]_1.
\]

\bibliographystyle{amsplain}
\bibliography{Rangipour-Sutlu-References}{}

\end{document}